\numberwithin{equation}{section}
\let\theoremstyle\undefined
\crefname{equation}{equation}{equations}
\setlist[enumerate]{leftmargin=.5in}
\setlist[itemize]{leftmargin=.5in}
\newtheorem{theorem}{Theorem}[]
\newtheorem{definition}[theorem]{Definition}
\newtheorem{lemma}[theorem]{Lemma}
\newtheorem{proposition}[theorem]{Proposition}
\newtheorem{corollary}[theorem]{Corollary}
\newtheorem{remark}{Remark}[section]
\newcounter{MotEx}
  \theoremstyle{plain}
  \newtheorem{MotExample}[MotEx]{\textbf{Motivating Example}, Step}
\def\<{\langle}
\def\>{\rangle}
\DeclareFontFamily{U}{mathx}{}
\DeclareFontShape{U}{mathx}{m}{n}{<-> mathx10}{}
\DeclareSymbolFont{mathx}{U}{mathx}{m}{n}
\DeclareMathAccent{\widehat}{0}{mathx}{"70}
\DeclareMathAccent{\widecheck}{0}{mathx}{"71}
\DeclareMathOperator{\R}{\mathbb{R}}
\DeclareMathOperator{\N}{\mathbb{N}}
\DeclareMathOperator{\Z}{\mathbb{Z}}
\newcommand{\norm}[1]{\left\lVert#1\right\rVert}
\newcommand{\abs}[1]{\left|#1\right|}
\DeclareMathOperator{\supp}{supp}
\DeclareMathOperator*{\argmin}{argmin}
\DeclareMathOperator{\Id}{Id}
\DeclareMathOperator{\RadonD}{\vec{\mathcal{R}}}
\DeclareMathOperator{\radonSD}{\mathcal{R}^{\text{sd}}}
\DeclareMathOperator{\cradonSD}{\mathscr{R}^{\text{sd}}}
\newcommand{\Npxl}{N_{\text{pxl}}}
\newcommand{\Ndtc}{N_{\text{dtc}}}
\newcommand{\m}{\vec{m}} 
\newcommand{\jkm}{{j, k, \m}} 
\newcommand{\gen}{\psi} 
\newcommand{\hatgen}{\widehat{\gen}}
\newcommand{\bump}{\widehat{\phi}} 
\newcommand{\SHsys}{\Psi} 
\newcommand{\dimFlip}{M_{\! \leftrightarrow}}
\newcommand{\bapu}{\varphi} 
\newcommand{\videos}{\mathcal{G}^2(Z)} 
\newcommand{\mcP}{\mathcal{P}}  
\newcommand{\mcQ}{\mathcal{Q}}  
\newcommand{\mcS}{\mathcal{S}}  
\newcommand{\mcT}{\mathcal{T}}  
\newcommand{\mcY}{\mathcal{Y}}
\newcommand{\msR}{\mathscr{R}}
\newcommand{\msT}{\mathscr{T}} %
\newcommand{\msK}{\mathscr{K}}
\DeclareMathOperator{\sh}{\mathcal{SH}}
\DeclareMathOperator{\shD}{\vec{\sh}}
\renewcommand{\vec}[1]{\boldsymbol{#1}} 
\newcommand{\f}{\vec{f}} 
\newcommand{\g}{\vec{g}} 
\newcommand{\Wop}{\vec{W}} 
\newcommand{\Kop}{\vec{K}} 
\newcommand{\thetab}{\vec{\theta}} 
\newcommand{\gNd}{{\g_{N}^{\delta}}} 
\newcommand{\faNd}{\f_{\alpha,N}^{\delta}} 
\newcommand{\Prob}{\mathbb{P}}
\newcommand{\bu}{{\bf u}}
\newcommand{\be}{{\vec{\epsilon}}}
\newcommand{\cAbu}{\mathscr{A}_{\bu}}
\newcommand{\cA}{\mathscr{A}}
\newcommand{\cAm}{\mathscr{A}_\mu}
\newcommand{\Sbu}{S_{{\bu}}}
\newcommand{\E}{\mathbb{E}}
\def\fda{f^\delta_{\alpha}}
\def\fdan{f^\delta_{\alpha,N}}
\def\sdiff{r}
\def\Jda{J^\delta_{\alpha}}
\def\Jdan{J^\delta_{\alpha,N}}
\def\gd{{g}^{\delta}}
\def\gdan{{\bf g}^{\delta}_{N}}
\newcommand{\ip}[2]{\langle#1,#2\rangle}
\DeclareRobustCommand{\bOne}{
  \text{\usefont{U}{bbold}{m}{n}1}} 
\newcommand{\bigger}[1]{\scalebox{1.2}{#1}} 
\newcounter{cnt}
\newcommand{\addPDFfig}[1]{
	\setcounter{cnt}{#1}
	\stepcounter{cnt}
	
	\includegraphics[width=.99\linewidth, page=\value{cnt}]{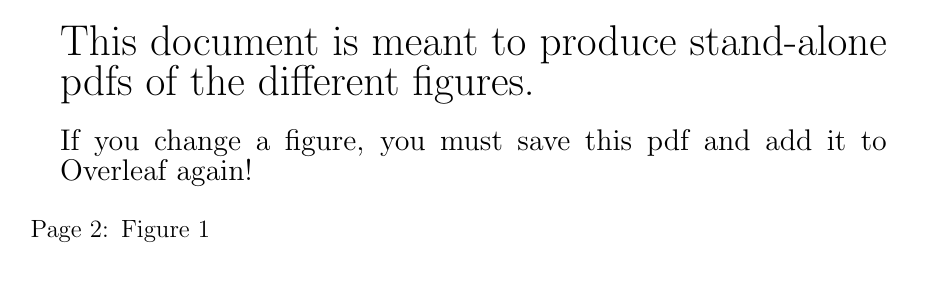}
}
\newcommand{\email}[1]{\protect\href{mailto:#1}{#1}}
\def\stripzero#1{\expandafter\stripzerohelp#1}
\def\stripzerohelp#1{\ifx 0#1\expandafter\stripzerohelp\else#1\fi}
\title{Regularization with  optimal  space-time priors }
\date{Published in SIAM Journal on Imaging Sciences, 18(3), p. 1563-1600, 2025.\\ \href{https://doi.org/10.1137/24M1661923}{DOI: 10.1137/24M1661923}}
\author{Tatiana A. Bubba\thanks{Department of Mathematics and Computer Science, University of Ferrara, Italy (\email{tatiana.bubba@unife.it}).}
\and Tommi Heikkil\"a\thanks{School of Engineering Sciences, LUT University, Finland  (\email{tommi.heikkila@lut.fi}).}
\and Demetrio Labate\thanks{Department of Mathematics, University of Houston, USA (\email{dlabate@math.uh.edu}).}
\and Luca Ratti\thanks{Department of Mathematics, University of Bologna, Italy (\email{luca.ratti5@unibo.it}).}
}
\newlength{\plotSz}
\newlength{\imSz}
\newlength{\imShift}
\begin{document}

\maketitle

\begin{abstract}
We propose a variational regularization approach based on a multiscale representation called cylindrical shearlets aimed at dynamic imaging problems, especially dynamic tomography. The intuitive idea of our approach is to integrate a sequence of separable static problems in the mismatch term of the cost function, while the regularization term handles the nonstationary target as a spatio-temporal object. 
This approach is motivated by the fact that cylindrical shearlets provide (nearly) optimally sparse approximations on an idealized class of functions modeling spatio-temportal data and the numerical observation that they provide highly sparse approximations even for more general spatio-temporal image sequences found in dynamic tomography applications. 
To formulate our regularization model, we introduce cylindrical shearlet smoothness spaces, which are instrumental for defining suitable embeddings in functional spaces. We prove that the proposed regularization strategy is well-defined, and the minimization problem has a unique solution (for $ p > 1$). 
Furthermore, we provide convergence rates (in terms of the symmetric Bregman distance) under deterministic and random noise conditions, within the context of statistical inverse learning. We numerically validate our theoretical results using both simulated and measured dynamic tomography data, showing that our approach leads to an efficient and robust reconstruction strategy.
\end{abstract}

\noindent\textsc{Keywords}: Cylindrical shearlet, dynamic tomography, regularization, statistical inverse learning, smoothness spaces, cartoon-like videos

\medskip
\noindent\textsc{MSC codes}: 
47A52, 
42C40, 
65J20, 
65J22 

\section{Introduction} \label{sec:intro}
Traditional X-ray tomography is typically used to determine the interior structure of an unknown (static) object measuring the total attenuation of X-rays from many orientations. This inverse problem is known to become more severely ill-posed as the number of measurements (i.e., \emph{projections}) decreases. Regularization theory of inverse problems is widely used to overcome errors and limitations caused by very few measurements or too coarse modelling. The underpinning idea is that in order to obtain realistic approximate solutions from limited and noisy data, suitable prior information needs to be built into the problem by means, e.g., of a penalty term, leading to a variational regularization formulation of the reconstruction problem. A wide range of different priors have been proposed to enforce desired structures and properties on the reconstructions. Often, in limited data problems, total variation (TV), which enforces gradient sparsity, is used as a simple, yet effective prior~\cite{Chen2013,Jorgensen2015,Liu2012,Sydky2008,Sydky2006,Tian2011}, as well as its variants, e.g., total generalized variation~\cite{Niu2014}. Alternative strategies have employed wavelets~\cite{Hamalainen2013,Jia2011,Loris2006,Niinimaki2007,rantala2006wavelet,goossens2020}, curvelets~\cite{Candes2000,Frikel13},  shearlets~\cite{Bubba2018,Colonna2010,Guo2013,Riis2018,vandeghinste2013} and combination of those~\cite{goppel2024data} as priors.

However, noise and limited data are not the only cause of  errors in tomographic recon\-structions. In many applications, the target (or parts of it) is dynamic, i.e., it can change between the recording of two consecutive projection images. As a consequence, movement or changes of the target can cause reconstruction artefacts  \cite{milanfar1999model, mutaf2007impact}, unless carefully accounted~\cite{blanke2020inverse} or compensated for~\cite{desbat2007compensation, isola2008motion, hahn2017motion}, either prior~\cite{lu2002tomographic}, during~\cite{van2012combined, niemi2015dynamic, burger2017variational, chen2019new, lunz2021learned, liu2021rethinking} or after the reconstruction process~\cite{ritchie1994predictive, gravier2007tomographic}. 

\textit{Dynamic tomography}, that is, the study of tomographic image  reconstructions of non-stationary targets, is the motivating example of this work.
An extensive review of dynamic imaging problems (including tomography) is provided, e.g., in~\cite{lechleiter2018dynamic, hampel2022review, hauptmann2021image}.
Adopting the classical viewpoint of regularization theory of static inverse problems, we are interested in establishing a variational regularization framework where both spatial structure and temporal evolution of the solution are restricted by some predetermined model or representation system. As compared to those approaches aiming at accurately modelling the uncertainties coming from the non-stationary target in the forward operator, variational approaches turn modelling into a quest for a suitable prior for the penalty term. For example, changes over time can be controlled by penalizing the total variation in time \cite{papoutsellis2021core, pasha2023computational}, possibly coupled with optical flow~\cite{burger2017variational}, separating the dynamic parts from the static background via low-rank constraints \cite{gao2011robust, otazo2015low, wang2014fast, arridge2020joint},
Bayesian estimation \cite{fall2013dynamic, myers2014improving, lan2023spatiotemporal}, 
Kalman filters \cite{schmitt2002efficient, hakkarainen2019undersampled} 
or by sparse multivariate systems \cite{tan2015tensor, Bubba20, bubba2023efficient, kadu2023single}. 

Here we introduce a variational regularization approach based on \textit{cylindrical shearlets}~\cite{Easley21}, a recently proposed direction-aware representation system aimed at efficiently approximating spatio-temporal data, that is, temporal sequences (or videos) of 2D images. Compared to wavelets or even classical shearlets~\cite{shearlets}, the geometry of cylindrical shearlets is better suited to the characteristics of spatio-temporal data, as such data are typically dominated by edge discontinuities in the spatial coordinates only. We are motivated by the recent observation that cylindrical shearlets provide nearly optimally sparse approximations for a class of functions that is useful to model spatio-temporal image sequences and videos~\cite{Easley21, bubba2023efficient}
where they provide $\mathscr{N}$-term approximations with error rate of order $O(\mathscr{N}^{-2})$. In contrast, conventional 3D wavelets and 3D shearlets only achieve approximation rates that are $O(\mathscr{N}^{-\frac{1}{2}})$ \cite{devore1998} and $O(\mathscr{N}^{-1})$ \cite{GL_3D}, respectively.

To formulate our regularization model, we also need to  define the corresponding \textit{cylindrical shearlet smoothness spaces}  (in \cref{ssec:DecompSpace}) by extending the work of one of the authors~\cite{labate13}. This definition is based on the theory of decomposition spaces introduced by K.~Gr\"{o}bner and H. G. Feichtinger~\cite{feichtinger1985banach, feichtinger1987banach} and later adapted into the theory of smoothness spaces by L.~Borup and M.~Nielsen~\cite{borup2007frame}. Building cylindrical shearlet smoothness spaces is instrumental for deriving suitable embeddings in functional spaces. This allows us to express the norm of the target's cylindrical shearlet coefficients as the norm of the target in a suitable cylindrical shearlet smoothness space, similarly to the classical case of wavelet coefficients and Besov norms~\cite{daubechies2004}. 

This sets the foundation for stating our regularised model for dynamic X-ray tomography:
\[
\argmin_{f \in X} \left\{
\frac{1}{2} \ \| \cA f 
    - \gd\|^2_{Y} + \frac{\alpha}{p} \| \sh  (f) \|^p_{\ell^p}
\right\},
\]
where $\cA$ is the time-dependent X-ray transform, $\sh$ the cylindrical shearlet transform, $X$ the cylindrical shearlet smoothness space, $Y$ a Bochner space (integrating over time), and we consider $1<p<2$ (see~\cref{sec:dynTomo} for all the details). This formulation is certainly reminiscent of a classical regularized formulation of static imaging problems. However, there is a twist in our model: while the mismatch term essentially integrates a sequence of separable, static tomographic problems, the regularization term sees the non-stationary target as a spatio-temporal object, tying inextricably the temporal dimension to the spatial ones.  

This approach is seemingly straightforward, both in its formalization (constrained analysis formulation) and conceptualization (treating time as an integral dimension of the target). However, it proves to be very rich to explore and analyze from a theoretical viewpoint and surprisingly effective in its practical numerical implementation. Indeed, we are able to prove that this regularization strategy is well-defined, and the solution of the minimization problem exists and is unique for $p>1$ (see \cref{ssec:regularizingCylindrical}). Furthermore, we provide convergence rates (in terms of the symmetric Bregman distance) under deterministic and random noise conditions (see \cref{ssec:nonsampled}). These results build upon the vast literature on regularization of inverse problems, in particular on the derivation of convergence rates, in various error metrics. The most relevant to our framework are~\cite{Lorenz08}, which considers $p \in [0,2]$, \cite{Grasmair08}  where $p \in [1,2]$,  \cite{Burger04,Grasmair11,Grasmair11bis,Haltmeier12}, which focus on $p = 1$, and~\cite{Hohage2019,Weidling2020} where Tikhonov regularization in Besov spaces is deeply analysed, both in the context of deterministic and random noise.

We remark that the bounds we derive in \cref{prop:conv_rate_full} for the dynamic case are the same of those derived in the literature for the static one.
Finally, these estimates are also analysed under the lens of the statistical inverse learning framework~\cite{blanchard2018optimal} (see \cref{ssec:sampled}), extending to the dynamic setting the work in \cite{bubba2021, Bubba22}, where convergence rates for $p$-homogeneous regularization functional are established, and numerically verified in the context of tomographic imaging with randomized projections. The latter estimates could be relevant for (dynamic) tomography scenarios where the exact projection angles can not be determined a priori by the operator. For example, in really fast tomography, where hardware limits rotation and measurement speed~\cite{ruhlandt2017four}; or \cite{walker2014vivo}, where gating is subject to irregular movement, making the chosen angles irregular too; or in cryo electron tomography where many copies of similar targets are taken at unknown orientation, which can be viewed as having many projections of just one target from unknown directions~\cite{oikonomou2016new}.

To show that the derived theoretical convergence rates are attained in practice, we include several numerical experiments using both simulated and measured dynamic tomographic data (see \cref{sec:numericalTests}). Our  numerical experiments validate the theoretical approximation rates  and show that our theoretical framework leads to a practical and robust reconstruction strategy. While the numerical experiments are restricted to a (2+time)-dimensional setup for computational feasibility, both the theoretical and computational frameworks are extendable to higher dimensions, the only limitation being the computational times. In the spirit of reproducible research, our numerical codes are made available on GitHub~\cite{randDynTomo}.

Finally, we remark that, while the results derived in this work are motivated by the case of dynamic computed tomography, our approach is very general since quite general assumptions are required on the dynamic operator, making it extendable to many dynamic imaging  modality, e.g., magnetic resonance imaging (MRI)~\cite{otazo2015low}, magnetic particle imaging (MPI)~\cite{brandt2024dynamic} or positron emission tomography (PET)~\cite{schmitzer2020dynamic}.

\section{Cylindrical shearlets} \label{sec:cylShearlets}

Cylindrical shearlets were introduced to provide an efficient representation system of spatio-temporal data, assuming that edge discontinuities are mostly located along the spatial coordinates while along the last coordinate axis the geometry is notably simpler. 
We briefly recall below their construction from~\cite{Easley21}.

\subsection{Construction} \label{ssec:cylShConstruction}

Similar to traditional bandlimited shearlets in $L^2(\R^3)$, the construction of cylindrical shearlets consists first in splitting the frequency domain $\R^3$ into symmetric pairs of conical sections, followed by partitioning into rectangular annuli before applying directional filtering. Hence, we first define the cones:
\begin{align*}
    P_1 = \lbrace (\xi_1, \xi_2, \xi_3) \in \R^3 : \abs{\frac{\xi_2}{\xi_1}} \leq 1 \rbrace, \quad P_2 = \lbrace (\xi_1, \xi_2, \xi_3) \in \R^3 : \abs{\frac{\xi_1}{\xi_2}} \leq 1 \rbrace.
\end{align*}
Then, a cone-adapted \emph{cylindrical shearlet system} is a collection of functions
\begin{equation} \label{eq:cShearSys}
    \SHsys^{(\iota)} = \lbrace \psi^{(\iota)}_{\jkm} : j \geq 0, \abs{k} \leq 2^j, \vec{m} \in \Z^3 \rbrace,
\end{equation}
defined separately for the two cones $P_\iota, \iota=1,2$.
Denoting the Fourier transform of $\psi$ by $\hatgen$, the cylindrical shearlet elements of (\ref{eq:cShearSys}) are defined in the Fourier domain as
\begin{equation}    \label{eq:cShearSys2}
    \hatgen^{(\iota)}_{\jkm}(\xi) = \abs{\det A_{(\iota)}}^{-\frac{j}{2}} W(2^{-2j} \xi) V_{(\iota)} \left( B_{(\iota)}^{-k} A_{(\iota)}^{-j} \xi \right) e^{2\pi i B_{(\iota)}^{-k} A_{(\iota)}^{-j} \xi \cdot \vec{m}},
\end{equation}
where the functions $W$ and $V_{(\iota)}$ (defined below) provide the scale- and direction-based filtering and the matrices are
\begin{align}
    A_{(1)} = \begin{pmatrix}
    4 & 0 & 0 \\ 0 & 2 & 0 \\ 0 & 0 & 4
    \end{pmatrix}, \ B_{(1)} = \begin{pmatrix}
    1 & 0 & 0 \\ 1 & 1 & 0 \\ 0 & 0 & 1
    \end{pmatrix}, \ A_{(2)} = \begin{pmatrix}
    2 & 0 & 0 \\ 0 & 4 & 0 \\ 0 & 0 & 4
    \end{pmatrix}, \ B_{(2)} = \begin{pmatrix}
    1 & 1 & 0 \\ 0 & 1 & 0 \\ 0 & 0 & 1
    \end{pmatrix}.
\end{align}
Note that the shear matrices $B_{(\iota)}$ are transposed compared to the usual definitions since we work mostly in the Fourier domain.

To define our Fourier subbands, we start by  choosing $\phi \in L^2(\R^3)$ such that $\bump \in \mathcal{C}_0^\infty$ (i.e., the class of infinitely differentialble functions with compact support), with $0 \leq \bump(\xi) \leq 1$ and
\begin{equation}  \label{eq.phi}
    \bump(\xi) = 1 \ \text{for} \ \xi \in \Big[ -\frac{1}{16}, \frac{1}{16} \Big]^3 \ \text{and} \ \bump(\xi) = 0 \ \text{for} \ \xi \in \R^3 \setminus \Big[ -\frac{1}{8}, \frac{1}{8} \Big]^3.
\end{equation}
Setting
    $W^2(\xi) = \bump^2\Big( \tfrac{\xi}{4} \Big) - \bump^2\big( \xi \big)$,
we obtain the following smooth partition of unity:
\begin{equation} \label{eq:discrete Calderon}
    \bump^2(\xi) + \sum_{j = 0}^\infty W^2\left( 2^{-2j} \xi \right) = 1 \ \text{for} \ \xi \in \R^3,
\end{equation}
where each function $W\left( 2^{-2j} \xi \right)$ is supported on a rectangular annulus $\left[ -2^{2j - 1}, 2^{2j - 1} \right]^3 \, \setminus \, \left[ -2^{2j - 4}, 2^{2j - 4} \right]^3$. This construction partitions the Fourier space into partially overlapping rectangular annuli outside a central cube.
Next, to produce directional filtering, we let
\begin{equation*}
    V_{(1)} (\xi) = v\left( \tfrac{\xi_2}{\xi_1} \right), \; V_{(2)} (\xi) = v\left( \tfrac{\xi_1}{\xi_2} \right),
\end{equation*}
where $v \in \mathcal{C}^\infty(\R)$ is such that $\supp(v) \subset [-1, 1]$ and 
\begin{equation} \label{eq.v}
    \abs{v(z-1)}^2 + \abs{v(z)}^2 + \abs{v(z+1)}^2 = 1, \ \text{for} \ z \in [-1, 1],
\end{equation}
which implies that $\sum_{k \in \Z}\limits \abs{v(z+k)}^2 = 1$, for any $z \in \R$.

By the  definitions of $W$ and $v$, we see that each element $\hatgen_{j,k,\m}^{(1)}$ is contained in the set 
\begin{align*}
    Q^{(1)}_{j,k} = \left\lbrace \xi \in \left[ -2^{2j - 1}, 2^{2j - 1} \right]^3 \, \setminus \, \left[ -2^{2j - 4}, 2^{2j - 4} \right]^3 : \abs{\frac{\xi_2}{\xi_1} - 2^{-j} k} \leq 2^{-j} \right\rbrace.
\end{align*}
The set $Q^{(2)}_{j,k}$ is defined similarly by switching $\xi_1$ and $\xi_2$ around. Notice also that $Q_{j,k} \subset B^k A^j Q_{0,0}$ and for any $j,k$ we have:
\begin{equation*}
\dimFlip Q^{(1)}_{j,k} = Q^{(2)}_{j,k},
\qquad \text{where }\;
\dimFlip = \dimFlip^{-1} = \begin{pmatrix}
    0 & 1 & 0 \\
    1 & 0 & 0 \\
    0 & 0 & 1
    \end{pmatrix}.
\end{equation*}
It is also easy to choose an affine transformation $M_0$ such that $\det(M_0) = 1$ and $\left[ -2^{- 3}, 2^{- 3} \right]^3 \subset M_0 Q_{0,0}^{(1)}$, to include the central low-frequency cube. For example, a simple shift $M_0\xi = \xi + (\frac{1}{3},0,0)$ suffices. Notice that we do not always specify the index of the sets $Q$ and matrices $A$ and $B$ when $\iota=1$ if it is otherwise clear from context.

The wedge-shaped Fourier-domain support of the cylindrical shearlet system is illustrated side-by-side with the partition of traditional 3D shearlets in \cref{fig:3DshearletCones}, showing that cylindrical shearlets tile the Fourier domain differently compared to traditional 3D shearlets.

\begin{figure}[tbh]
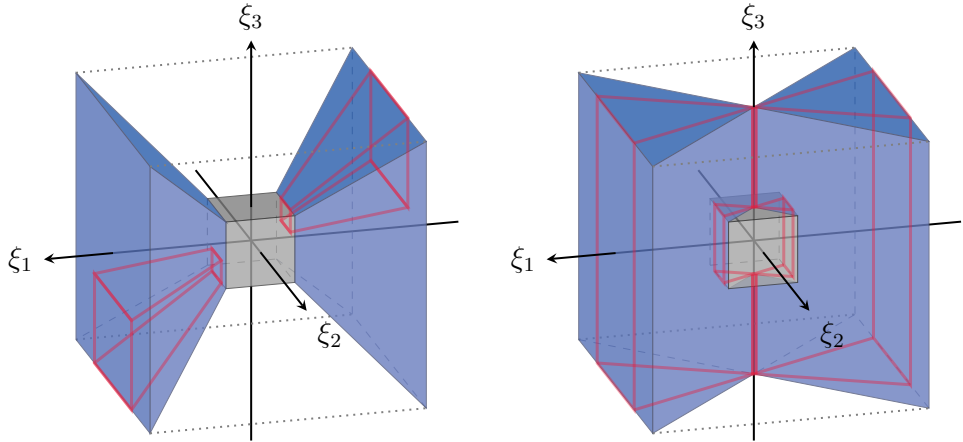

    \addPDFfig{1} 
    \caption{Left: illustration of one pyramid-like section of the traditional 3D shearlet system in the Fourier domain and (in red) the outline of the Fourier support of a shearlet element $\psi^{(\iota)}_{\jkm}$, $\iota=1, j=0, k=(0,1)$ and $\vec{m} \in \Z^3$. Right: illustration of one cone-like section of the 3D cylindrical shearlet system in the Fourier domain and (in red) the outline of the Fourier support of a cylindrical shearlet element $\gen^{(\iota)}_{\jkm}$, $\iota=1, j=0, k=0$ and $\vec{m} \in \Z^3$. In both illustrations, the low frequency cube is shown in gray.}
    \label{fig:3DshearletCones}
\end{figure}

We obtain a smooth Parseval frame of cylindrical shearlets for $L^2(\R^3)$ by combining the two conical systems $\SHsys^{(1)}$ and $\SHsys^{(2)}$, given by~\eqref{eq:cShearSys}, with a coarse scale
system generated by the integer translates of $\phi$, given by~\eqref{eq.phi}. Additionally, to ensure that all elements of this combined system have good decay properties, we modify the elements of the shearlet system overlapping the boundaries of the regions $P_1$ and $P_2$ in the frequency domain. In brief, the shearlet elements from $\SHsys^{(1)}$ and $\SHsys^{(2)}$ overlapping the lines $\xi_1 = \pm \xi_2$ are merged together to ensure a smooth transition between the two pyramidal regions in the Fourier domain. We refer to~\cite{GL_MMNP} for details about the construction of these {\it boundary shearlets.} 	Hence, we obtain  the \textit{discrete cylindrical shearlet system}:
\begin{eqnarray}  
    \SHsys &:=& \lbrace \gen_{-1,\vec{m}} := \phi(x - \vec{m}) \ : \ \vec{m} \in \Z^3 \rbrace
    \cup \lbrace \psi^{(\iota)}_{\jkm} : j \geq 0, \abs{k} < 2^j, \vec{m} \in \Z^3, \iota=1,2 \rbrace \nonumber \\
    && \cup \lbrace \tilde \psi^{(\iota)}_{\jkm} : j \geq 0, k = \pm 2^j, \vec{m} \in \Z^3, \iota=1,2 \rbrace,  \label{eq:cShearSystem}
    \end{eqnarray}  
where the functions $\psi^{(\iota)}_{\jkm}$ are given by~\eqref{eq:cShearSys2} and the functions $\tilde \psi^{(\iota)}_{\jkm}$ are obtained by a simple modification of the functions 
$\psi^{(\iota)}_{\jkm} \in \Psi^{(\iota)}$ (we refer to~\cite{Easley21} for all the details on the construction). We recall the following result.

\begin{theorem}[Theorem~1, \cite{Easley21}] \label{thm:cylShParsevalFrame}
Let $\phi \in L^2(\R^3)$ be chosen such that $\hat \phi \in \mathcal{C}_0^\infty(\R^3)$ and~\eqref{eq.phi} hold, and $v \in C_0^\infty(\R)$ be chosen so that $\supp (v) \subset [-1,1]$ and~\eqref{eq.v} hold.  
Then the discrete cylindrical shearlet system \eqref{eq:cShearSystem} is a Parseval frame
for $L^2(\R^3)$. Furthermore, the elements of this system are $\mathcal{C}^\infty$ and compactly supported in the Fourier domain.
\end{theorem}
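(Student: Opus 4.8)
The plan is to prove the Parseval frame property by establishing the identity $\sum_{g\in\SHsys}\abs{\ip{f}{g}}^2=\norm{f}^2_{L^2(\R^3)}$ for every $f\in L^2(\R^3)$, since a countable family satisfying this is by definition a Parseval frame. First I would pass to the frequency side via Plancherel, writing $\ip{f}{\shear}=\ip{\hat f}{\hatshear}$, so that every step can exploit the explicit Fourier supports given in \eqref{eq:cShearSys2} and \eqref{eq:cShearSystem}.

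The first genuine computation is the sum over the translation index $\m\in\Z^3$ at a fixed triple $(j,k,\iota)$. Here I would use the change of variables $\eta=B_{(\iota)}^{-k}A_{(\iota)}^{-j}\xi$, under which the modulation factor in \eqref{eq:cShearSys2} becomes the plain exponential $e^{2\pi i\eta\cdot\m}$. The key structural fact, which I would read off from the description of $Q^{(\iota)}_{j,k}$ and the inclusion $Q_{j,k}\subset B^kA^jQ_{0,0}$, is that in the $\eta$-coordinates the Fourier support of each element lies in a single fundamental domain of $\Z^3$ (a unit cube), so that $\{e^{2\pi i\eta\cdot\m}\}_{\m\in\Z^3}$ is an orthonormal basis of $L^2$ of that cube. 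Granting this non-aliasing condition, Parseval for Fourier series gives
\[
\sum_{\m\in\Z^3}\abs{\ip{\hat f}{\hatshear^{(\iota)}}}^2=\int_{\R^3}\abs{\hat f(\xi)}^2\,W^2(2^{-2j}\xi)\,V_{(\iota)}^2\!\left(B_{(\iota)}^{-k}A_{(\iota)}^{-j}\xi\right)d\xi,
\]
where the normalization $\abs{\det A_{(\iota)}}^{-j/2}$ in \eqref{eq:cShearSys2} is exactly what cancels the Jacobian $\abs{\det(A_{(\iota)}^jB_{(\iota)}^k)}=\abs{\det A_{(\iota)}}^{j}$ of the change of variables (recall $\det B_{(\iota)}=1$), leaving no scale-dependent constant. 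The identical argument on the coarse scale gives $\sum_{\m}\abs{\ip{f}{\phi(\cdot-\m)}}^2=\int\abs{\hat f}^2\,\bump^2\,d\xi$, since $\bump$ is supported in $[-1/8,1/8]^3$, well inside a unit cube.

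The heart of the matter is then to show that summing these contributions produces a smooth partition of unity,
\[
\bump^2(\xi)+\sum_{\iota=1,2}\sum_{j\ge 0}\sum_{\abs{k}\le 2^j}W^2(2^{-2j}\xi)\,V_{(\iota)}^2\!\left(B_{(\iota)}^{-k}A_{(\iota)}^{-j}\xi\right)=1,
\]
which I would factor into a radial and an angular part. The radial part is exactly the discrete Calder\'on condition \eqref{eq:discrete Calderon}. For the angular part I would use the explicit form $V_{(1)}(B_{(1)}^{-k}A_{(1)}^{-j}\xi)=v(2^j\xi_2/\xi_1-k)$ (and the symmetric expression for $\iota=2$), so that the consequence $\sum_{k\in\Z}\abs{v(z+k)}^2=1$ of \eqref{eq.v} supplies the angular partition of unity on each cone. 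The delicate point is that $k$ ranges only over $\abs{k}\le 2^j$ rather than all of $\Z$: the two conical sums are therefore incomplete precisely along the dividing planes $\xi_1=\pm\xi_2$, and the gap is filled by the boundary elements $\tshear^{(\iota)}$. I expect the main obstacle to be exactly this boundary analysis — verifying that the modification producing the boundary shearlets restores the sum to an exact $1$ across the seam between $P_1$ and $P_2$, while keeping all elements $\mathcal{C}^\infty$ and compactly supported in frequency and avoiding any double counting of the shared directions $k=\pm2^j$. This is the step for which the construction of~\cite{Easley21} and the boundary shearlets of~\cite{GL_MMNP} are designed, and I would import those estimates rather than reconstruct them.

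Finally, the regularity claim is comparatively soft: $\bump\in\mathcal{C}_0^\infty$ and $v\in\mathcal{C}_0^\infty$ force $W\in\mathcal{C}_0^\infty$ through its definition, while each $V_{(\iota)}$ is smooth because on the support of $W(2^{-2j}\cdot)$ the quotient $\xi_2/\xi_1$ (resp. $\xi_1/\xi_2$) stays bounded away from its singularity inside each cone. Hence every $\hatshear_{\jkm}^{(\iota)}$ is a finite product of $\mathcal{C}^\infty$ functions with compact support, and the same holds for the boundary and coarse-scale elements. Combining the translation-sum identity, the partition of unity, and Plancherel then yields $\sum_{g\in\SHsys}\abs{\ip{f}{g}}^2=\norm{f}^2$, which completes the proof.
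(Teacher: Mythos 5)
The paper does not prove this statement at all: it is imported verbatim as Theorem~1 of~\cite{Easley21}, so there is no in-paper argument to compare against. Your sketch correctly reproduces the standard proof used for such bandlimited cone-adapted systems — Plancherel, the translation sum collapsing via non-aliasing of the supports $Q_{j,k}\subset B^kA^jQ_{0,0}\subset B^kA^j[-\tfrac12,\tfrac12]^3$ with the $\abs{\det A_{(\iota)}}^{-j/2}$ normalization cancelling the Jacobian, and the radial/angular factorization of the partition of unity via \eqref{eq:discrete Calderon} and $\sum_k\abs{v(z+k)}^2=1$. The one step you defer to \cite{Easley21} and \cite{GL_MMNP} — the boundary-shearlet modification at $k=\pm2^j$ that repairs the seam between $P_1$ and $P_2$ without double counting — is indeed the only genuinely technical point, and you have correctly identified it as such; since the paper itself handles the entire theorem by citation, your proposal is, if anything, more explicit than the source text.
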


With a slight abuse of notation we denote the full discrete cylindrical shearlet parameter group as: 
$$\Lambda = \mcT \times \Z^3 \sim \lbrace (j, k, \iota; \vec{m}) : j \in \N \cup \{0,-1\}, |k| \leqslant 2^j, \iota = 1,2 \, ; \, \vec{m} \in \Z^3 \rbrace.$$
Separating the translations $\vec{m} \in \Z^3$ from the other parameters (in $\mcT$) is needed later in \cref{sssec:cylShDecompSpace}. With this notation,  we define the cylindrical shearlet transform:
\begin{align} \label{eq:cylShTransform}
    \sh: L^2(\R^3) &\longrightarrow \ell^2(\Lambda), \qquad \text{where } \; \sh(f) = \left(\langle f, \psi_\lambda \rangle\right)_{\lambda \in \Lambda}.
\end{align}
In \cref{ssec:DecompSpace}, we show that this function system belongs to a class of decomposition spaces associated with a special partition of the Fourier domain. Let us first motivate why we choose cylindrical shearlets to represent spatio-temporal data.

\subsection{Cylindrical cartoon-like functions and cartoon-like videos} \label{ssec:slantedData}

In this section, we recall the definition of cylindrical cartoon-like functions from \cite{Easley21}, used to model a simple class of spatio-temporal sequences with discontinuities in the spatial plane only. 

Following \cite{donoho2001}, we start by defining a class of indicator functions of sets with $\mathcal{C}^2 $ boundaries. In polar coordinates, let $\rho(\theta): [0, 2\pi) \rightarrow [0, 1]$ be a radius function  and define the set $S = \{x \in \R^2: \abs{x} \leq \rho(\theta), \theta \in [0,2 \pi)\}$.  In particular, the boundary $\partial S$ is given by the curve in $\R^2 $:
\begin{equation}\label{betatheta}
    \beta(\theta) = \begin{pmatrix}
        \rho(\theta) \cos(\theta) \\
        \rho(\theta) \sin(\theta)
    \end{pmatrix},
    \quad \theta \in [0, 2\pi).
\end{equation}
For any fixed constant $Z > 0$,  we say that a set $S \in  {STAR}^2(Z)$ if $S \subset [0, 1]^2$ and $S$ is a translate of a set satisfying \eqref{betatheta} where 
\begin{equation}\label{sup:cond}
\sup \abs{\rho^{''}(\theta)} \leq Z,\quad \rho \leq \rho_{0} < 1.
\end{equation}
Hence, denoting with $\mathcal{C}_0^2 ([0, 1]^2)$ the class of twice differentiable functions with support in $[0, 1]^2$, the set of {\bf cartoon-like images} $\mathcal{E}^2(Z) \subset L^2(\R^2)$ was defined in~\cite{donoho2001} as the
set of functions 
$h = h_0 \, \chi_S$
where
$h_0 \in \mathcal{C}_0^ 2 ([0, 1]^2 )$, $S \in  {STAR}^2(Z)$ and $\sum_{\abs{\alpha} \leq 2} \norm{D^{\alpha} h}_{\infty}\leq 1.$  

Using the above notation and definition, some of the authors  defined in \cite{Easley21} the class of {\bf cylindrical cartoon-like functions} as follows.
\begin{definition}[{\cite[Equation 12]{Easley21}}]  \label{def.functions}
Given a constant $Z > 0$, the class of {\bf cylindrical cartoon-like functions} is the set of functions $\mathcal{U}^2(Z) \subset L^2(\R^3)$ of the form
\begin{equation}   \label{def.cci}
  f(x) = f(x_1,x_2,x_3) =h(x_1,x_2)\, \chi_{S}(x_1,x_2)\, z(x_3),  
\end{equation}
where $S \in STAR^2(Z)$, $h \in  \mathcal{C}_0^2 ([0, 1]^2 )$, $z \in \mathcal{C}_0^2 ([0, 1])$ and 
$\norm{D^{\alpha} h}_{\infty}, \norm{D^{\alpha} z}_{\infty}\leq 1.$ 
\end{definition}
Note that for any cylindrical cartoon-like function $f$ and constant $c \in \R$, the function $\tilde f_c(x) = f(x_1,x_2,c)$ is a cartoon-like image, where the edge boundary $\partial S$ is the same for all $x_3$.

\subsubsection{Sparse approximations}
Cylindrical shearlets were shown to provide (nearly) optimally sparse approximations, in a precise sense, for the class of cylindrical cartoon-like functions defined by~\cref{def.cci}. 

Let  $\SHsys = \{\psi_\lambda: \lambda \in \Lambda\}$  be the Parseval frame of discrete cylindrical shearlets given by \eqref{eq:cShearSystem}. For any function $f \in L^2(\R^3)$, 
the \emph{cylindrical shearlet coefficients} are the elements of the sequence $\sh (f) = \{\ip{f}{\psi_{\lambda}}:\lambda \in \Lambda \}$. We denote by $f_\mathscr{N}$ the $\mathscr{N}$-term approximation of $f$  obtained from the $\mathscr{N}$ largest coefficients of its cylindrical shearlet expansion, namely:
$$f_\mathscr{N}=\sum_{\lambda \in \mathcal{I}_\mathscr{N}} \ip{f}{\psi_{\lambda}} \, \psi_{\lambda},$$
where $\mathcal{I}_\mathscr{N} \subset \Lambda$ is the set of indices corresponding to the $\mathscr{N}$ largest entries of the absolute-value sequence $\{|\ip{f}{\psi_{\lambda}}|:\lambda \in \Lambda \}$. We have the following result from \cite{Easley21}.

\begin{theorem}[{\cite[Theorem 3]{Easley21}}]  \label{th:main}
Given any $Z>0$, let $f \in \mathcal{U}^2(Z)$, the set of cylindrical cartoon-like functions, and $f_\mathscr{N}$ be the $\mathscr{N}$-term approximation to $f$ defined above. Then, for $\mathscr{N} \in \N$ there exists a constant $c$ independent of $\lambda$ in $\sh (f)$ and $\mathscr{N}$ such that:
$$\norm{f-f_\mathscr{N}}^2_{L^2} \leq c \, \mathscr{N}^{-2} {(\ln(\mathscr{N}))}^{3}.$$
\end{theorem}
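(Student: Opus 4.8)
The plan is to follow the established template for proving (nearly) optimal nonlinear approximation rates of directional multiscale systems, adapting it to the cylindrical geometry. Since $\SHsys$ is a Parseval frame (\cref{thm:cylShParsevalFrame}), the synthesis operator is a contraction, so for the index set $\mathcal{I}_\mathscr{N}$ of the $\mathscr{N}$ largest coefficients,
\[
\norm{f - f_\mathscr{N}}^2_{L^2} = \norm{\sum_{\lambda \notin \mathcal{I}_\mathscr{N}} \ip{f}{\psi_\lambda}\,\psi_\lambda}^2_{L^2} \leq \sum_{\lambda \notin \mathcal{I}_\mathscr{N}} \abs{\ip{f}{\psi_\lambda}}^2 = \sum_{n > \mathscr{N}} \abs{c}_{(n)}^2 ,
\]
where $(\abs{c}_{(n)})_n$ is the decreasing rearrangement of $(\abs{\ip{f}{\psi_\lambda}})_\lambda$. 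It therefore suffices to prove a weak-$\ell^{2/3}$ bound, $\abs{c}_{(n)} \lesssim n^{-3/2}$ up to logarithmic factors, since then $\sum_{n>\mathscr{N}}\abs{c}_{(n)}^2 \lesssim \mathscr{N}^{-2}(\log\mathscr{N})^3$. This I would obtain from two scale-by-scale ingredients: pointwise size bounds for the coefficients, and a count of how many are non-negligible at each scale $j$ (equivalently, a bound on $\sum_\lambda\abs{\ip{f}{\psi_\lambda}}^{2/3}$).

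For the size bounds I would split the indices according to whether the plate-like spatial support of $\psi_\lambda$ meets the singular set of $f$. Because $f = h\,\chi_S\,z$ is a cylindrical cartoon, its only singularity is the cylinder $\partial S \times \R$ over the $\mathcal{C}^2$ boundary curve $\partial S$, with $f$ of class $\mathcal{C}^2$ away from it and $z \in \mathcal{C}^2_0([0,1])$ smooth in the temporal variable throughout. For shearlets whose support avoids $\partial S\times\R$, repeated integration by parts against the smooth $f$, using the high frequency localization and many vanishing moments of $\psi_\lambda$, gives decay faster than any power of $2^j$, so these form a negligible tail. For shearlets meeting the cylinder, the crude bound $\abs{\ip{f}{\psi_\lambda}} \le \norm{f}_\infty\norm{\psi_\lambda}_{L^1} \lesssim 2^{-5j/2}$ is \emph{not} sufficient: combined with the $\sim 2^{3j}$ such shearlets per scale it would only yield the rate $\mathscr{N}^{-2/3}$. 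The crux is instead to exploit the cylindrical structure: since $B_{(\iota)}$ acts only on $(\xi_1,\xi_2)$ and $\psi_\lambda$ lives in a single cone and annulus, the coefficient essentially decouples into a two-dimensional shearlet coefficient of the planar cartoon $h\,\chi_S$ in the $(x_1,x_2)$ cross-section, of the familiar tangential size $\sim 2^{-3j/2}$, times a one-dimensional wavelet-type coefficient of $z$ at temporal scale $2^{-2j}$, which the vanishing moments in $x_3$ and $z\in\mathcal{C}^2_0$ bound by $\lesssim 2^{-5j}$.

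For the counting, at scale $j$ the support of $\psi_\lambda$ has spatial dimensions $\sim 2^{-2j}\times 2^{-j}$ and temporal thickness $\sim 2^{-2j}$; covering the length-$O(1)$ curve $\partial S$ by tangentially aligned plates needs $\sim 2^{j}$ spatial translations with only $O(1)$ aligned shears, and covering the unit temporal interval needs $\sim 2^{2j}$ translations in $x_3$. Summing $\abs{\ip{f}{\psi_\lambda}}^{2/3}$ over the scale-$j$ edge indices then gives, using the factorized size bound, $\bigl(2^{j}\cdot 2^{-j}\bigr)\bigl(2^{2j}\cdot 2^{-10j/3}\bigr) = 2^{-4j/3}$, whose sum over $j$ converges. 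Hence the coefficients lie in $\ell^{2/3}$, the rearrangement satisfies $\abs{c}_{(n)}\lesssim n^{-3/2}$, and the tail bound follows; the logarithmic factors $(\log\mathscr{N})^3$ are inherited from the borderline (scale-independent) planar contribution $2^{j}\cdot 2^{-j}=1$ when the temporal decay is accounted for conservatively, exactly as in the two-dimensional cartoon estimate.

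The hard part will be the on-edge size estimate and its bookkeeping. Making the decoupling into a planar shearlet coefficient and a temporal wavelet coefficient rigorous is delicate because the annular window $W$ is not a genuine tensor product, and one must control the geometric interaction between the curvature bound $\sup\abs{\rho''}\le Z$ of $\partial S$, the shear parameter $k$, and the anisotropic plate support, uniformly in $Z$. The structural reason the three-dimensional problem attains the two-dimensional rate is precisely that $z$ is smooth: the extra $\sim 2^{2j}$ temporal translations are compensated by the fast wavelet decay in $x_3$, so the temporal dimension never interacts with the spatial singularity. The main effort is to show that this compensation holds uniformly and that the logarithmic losses remain of order $(\log\mathscr{N})^3$.
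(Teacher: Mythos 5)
First, note that the paper does not prove this statement: \cref{th:main} is imported verbatim from \cite{Easley21}, so your sketch can only be measured against the proof in that reference. Your overall template is the right one and matches the standard approach in that literature: the Parseval-frame tail bound $\norm{f-f_\mathscr{N}}^2_{L^2}\leq\sum_{n>\mathscr{N}}\abs{c}_{(n)}^2$, the reduction to a (weak) $\ell^{2/3}$ estimate carried out scale by scale, the smooth/edge dichotomy, and the correct identification of where the $(\ln\mathscr{N})^3$ comes from. You also correctly identify that the crude bound $\abs{\ip{f}{\psi_\lambda}}\lesssim 2^{-5j/2}$ over $\sim 2^{3j}$ edge plates only yields $\mathscr{N}^{-2/3}$, so everything hinges on extra temporal decay.

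The gap is precisely at that crux: the temporal factor you claim, $\lesssim 2^{-5j}$ ``from the vanishing moments in $x_3$,'' is not available, because cylindrical shearlets have no vanishing moments in the $x_3$ variable alone. At scale $j$ in the cone $P_1$, the Fourier support of $\psi^{(1)}_{\jkm}$ is the full wedge $\{\abs{\xi}_\infty\in[2^{2j-4},2^{2j-1}],\ \abs{\xi_2/\xi_1-2^{-j}k}\leq 2^{-j}\}$, which contains points with $\xi_3=0$ (and $\abs{\xi_1}\sim 2^{2j}$); the shear matrices act only on $(\xi_1,\xi_2)$ and the window $V_{(1)}(\xi)=v(\xi_2/\xi_1)$ does not localize $\xi_3$ at all. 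Since $\widehat{f}=\widehat{g}\otimes\widehat{z}$ with $g=h\chi_S$ and $\widehat{z}$ concentrated in $\abs{\xi_3}\lesssim 1$, the $m_3$-dependence of the coefficient is, to leading order, a \emph{scaling-function-type} sample $z(4^{-j}m_3)$ times an $L^1$-normalization $\sim 2^{-j}$ --- not a wavelet coefficient of $z$ --- so the dominant aligned coefficients remain of size $\sim 2^{-3j/2}\cdot 2^{-j}=2^{-5j/2}$ for all $\sim 2^{2j}$ temporal translates meeting $\supp z$. Plugging the correct temporal factor into your own bookkeeping returns $\sum_{\lambda}\abs{\ip{f}{\psi_\lambda}}^{2/3}\sim 2^{4j/3}$ per scale, i.e.\ exactly the $\mathscr{N}^{-2/3}$ rate you set out to beat. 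A quick consistency check exposes the error independently of any geometry: by the Parseval property, $\sum_{\lambda\in\Lambda_j}\abs{\ip{f}{\psi_\lambda}}^2=\int\abs{\widehat f}^2\,W^2(2^{-2j}\xi)\,d\xi\asymp 2^{-2j}$ for a genuine jump across $\partial S$, whereas your bounds ($2^{3j}$ edge coefficients of size $\leq 2^{-3j/2}\cdot 2^{-5j}$, plus rapidly decaying smooth-region terms) would force this subband energy to be $O(2^{-10j})$. So the factorized size estimate cannot hold, and the mechanism by which \cite{Easley21} obtains the $\mathscr{N}^{-2}$ rate must be different from the one you propose; as written, the central step of your argument fails.
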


\begin{remark}
The decay rate in \cref{th:main} is nearly optimal, in the sense that, up to a logarithmic factor, no other representation system satisfying polynomial depth search can achieve an asymptotic approximation rate faster than $\mathscr{N}^{-2}$ for the class of functions $\mathcal{U}^2(Z)$ as shown in~\cite{Easley21}. By contrast, separable 3D wavelets and conventional 3D shearlets, up to a logarithmic factor, only achieve approximation rates that are $O(\mathscr{N}^{-1/2})$ \cite{devore1998} and $O(\mathscr{N}^{-1})$ \cite{GL_3D}, respectively. The notion of representation systems {\it satisfying polynomial depth search} includes any Parseval frame and was introduced to include function representations that are more general than orthonormal bases but not as large as to make the approximation problem computationally intractable \cite{candes1999,GL_3D}. The significance of Theorem~\ref{th:main} is that sparsity-promoting \emph{cylindrical shearlet norms are expected to provide a sparsity-inducing prior for data in the class of cylindrical cartoon-like functions}. 
\end{remark}

\subsubsection{Cartoon-like videos} \label{ssec:cartoon-videos}
Cylindrical cartoon-like functions are a rather coarse model of spatio-temporal data, as they exhibit the same discontinuous edge at the same location over the entire temporal sequence. We introduce here a generalization of this class of functions providing a more realistic model of spatio-temporal data found in applications.

\begin{definition}  \label{def.videos}
Given a constant $Z > 0$, we define the class of {\bf cartoon-like videos} as the functions $\videos \subset L^2(\R^3)$ of the form
\begin{eqnarray}\label{slant.funct}
    f(x) = h(x_1 - q_1(x_3), x_2 - q_2(x_3)) \, \chi_{S}(x_1 - q_1(x_3), x_2 - q_2(x_3)) \, z(x_3),
\end{eqnarray}
 where $S \in STAR^2(Z)$, $h \in  \mathcal{C}_0^2 ([0, 1]^2 )$, $z \in \mathcal{C}_0^2 ([0, 1])$, 
$\norm{D^{\alpha} h}_{\infty}, \norm{D^{\alpha} z}_{\infty}\leq 1$  and  $q_1, q_2 \in \mathcal{C}_0^2 ([0, 1])$ with $q_1(0)=q_2(0)=0.$   
\end{definition}

By construction, for any $x_3$, discontinuities in cartoon-like videos occur on the boundary of the  sets:
$$S_{x_3}=\{(x_1,x_2) \in \R^2:\, |(x_1 - q_1(x_3), x_2 - q_2(x_3))| \leq \rho(\theta), \theta \in [0,2 \pi)\},$$
where the radius function $\rho(\theta)$ is given above and $S=S_0$; i.e., we have the boundary curves: 
$$\theta \mapsto \left( \rho(\theta)\cos(\theta) + q_1(x_3),\rho(\theta)\sin(\theta) + q_2(x_3) \right).$$
Hence, we can write \eqref{slant.funct} as:
$$ f(x) = h(x_1 - q_1(x_3), x_2 - q_2(x_3)) \, \chi_{S_{x_3}}(x_1,x_2) \, z(x_3). $$
\begin{figure}[ht]
    \centering
    \begin{tikzpicture}
        \begin{scope}[xshift=-3cm] 
            \node[anchor=center] at (0,0) {\includegraphics[scale=1.5]{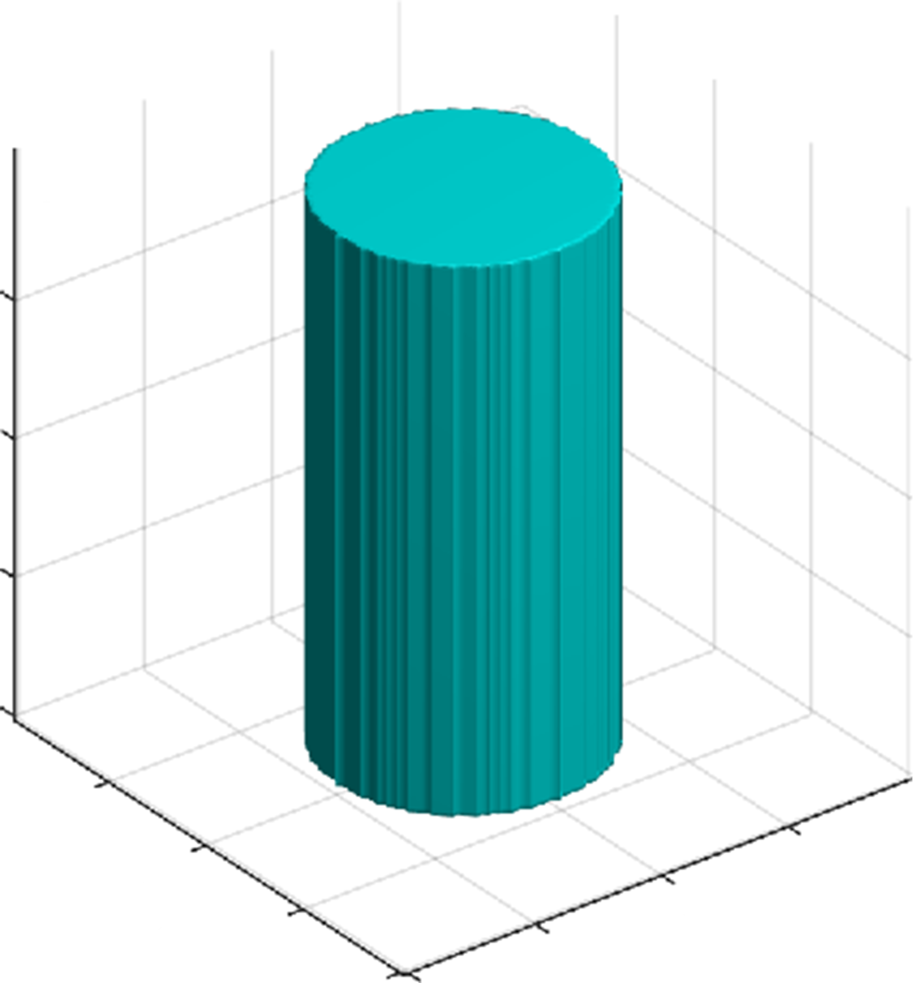}};
            \node at (-2.55,0.3) {\large $x_3$};
            \node at (-1.5,-2) {\large $x_2$};
            \node at (1.3,-2.15) {\large $x_1$};
        \end{scope}
        \begin{scope}[xshift=3cm] 
            \node[anchor=center] at (0,0) {\includegraphics[scale=1.5]{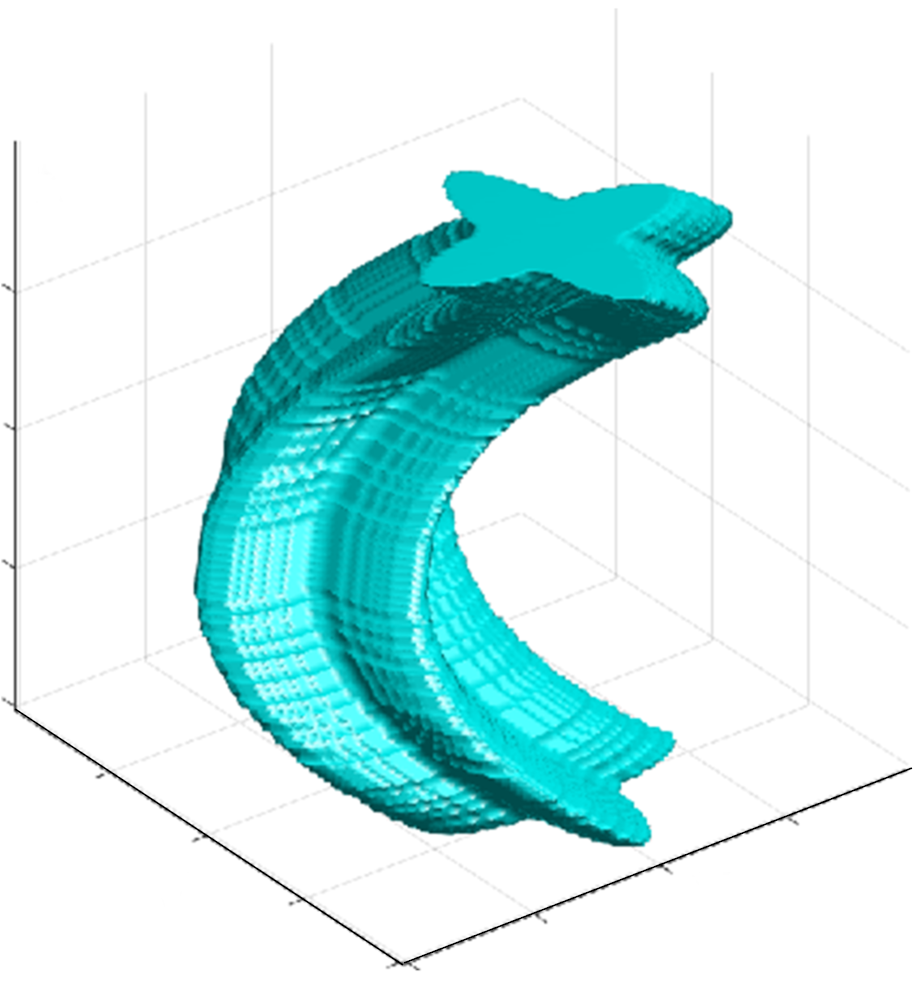}};
            \node at (-2.55,0.3) {\large $x_3$};
            \node at (-1.5,-2) {\large $x_2$};
            \node at (1.3,-2.15) {\large $x_1$};
        \end{scope}
    \end{tikzpicture}
 \caption{Example of a cylindrical cartoon-like function (left) and a cartoon-like video (right)  in $L^2(\R^3)$. }
 \label{fig.polartwo}
\end{figure}

\Cref{fig.polartwo} shows an example of a cylindrical cartoon-like function (left) and an example of a cartoon-like video (right) defined by displacing a starshape region $S \subset \R^2$ as a function of the
$x_3$ coordinate. Note that the region $S$ is allowed to gradually disappear if $z(x_3) = 0$ at some $x_3$.

\begin{remark}
Based on our numerical experiments reported below, we conjecture that the sparse approximation result in \cref{th:main} is also valid for the class of cartoon-like videos. However, no proof of this conjecture is available at this time.  
\end{remark}

\subsection{Decomposition and smoothness spaces} \label{ssec:DecompSpace}

We introduce the class of \emph{cylindrical shearlet smoothness spaces} by adapting the construction of the shearlet decomposition spaces \cite{labate13} associated with traditional shearlets \cite{shearlets, guo10} to cylindrical shearlet systems. 
We start by recalling the basic definitions of the decomposition space theory from \cite{feichtinger1985banach, feichtinger1987banach, borup2007frame}. For brevity, we write $f \lesssim g$ if there exists a constant $C > 0$ independent of $f$ and $g$ such that $f \leq C g$. We denote $f \lesssim g \lesssim f$ by $f \simeq g$.

\subsubsection{Constructing decomposition spaces} \label{sssec:DecompSpaceIntro}

An \emph{admissible covering} $\lbrace Q_i \subset \R^d : i \in I \rbrace$ is any collection of bounded and measurable sets such that $\bigcup_{i \in I} Q_i = \R^d$ and there exists a uniform bound on how many of the sets $Q_i$ can overlap at any point in $\R^d$. 
Given any admissible covering a \emph{bounded admissible partition of unity} (BAPU) is a family of functions $\lbrace \gamma_i : i \in I \rbrace$ such that, each $\gamma_i$ is supported only in $Q_i$; $\sum_i \gamma_i(\xi) = 1$ for any $\xi \in \R^d$; and $\sup_i |Q_i|^{\frac{1}{p}-1} \| \widecheck{\gamma}_i \|_{L^p} < \infty$ for any $p \in (0, \infty]$, where $\widecheck{\gamma}$ is the inverse Fourier transform of $\gamma$.

We are interested in more structured coverings. Let $\mcT = \lbrace \msT_k: x \mapsto A_k x + b_k : k \in \N \rbrace$ be a collection of invertible affine transformations on $\R^d$ and, to ease the notation, we omit the index $k$, if possible. Consider two bounded open sets $P, Q \subset \R^d$ such that $P$ is compactly contained in $Q$ and the collections $\mcQ = \lbrace \msT Q : \msT \in \mcT \rbrace$ and 
$\mcP = \lbrace \msT P : \msT \in \mcT \rbrace$ 
are both admissible coverings. If, in addition there is a constant $C$ such that:
\begin{align*}
    (A_k Q + b_k) \cap (A_l Q + b_l) \neq \emptyset \quad \Longrightarrow \quad \|A_l^{-1} A_k \|_{\ell^\infty} < C,
\end{align*} 
then we call $\mcQ$ a \emph{structured admissible covering} and $\mcT$ a \emph{structured family of affine transformations}. Then, we know that $\mcQ$ and $\mcT$ are also associated with a \emph{squared BAPU} $\lbrace \bapu_\msT : \msT \in \mcT \rbrace$, i.e., the second condition is replaced by $\sum_\msT |\bapu_\msT (\xi) |^2 = 1$ and the third one only needs to hold for any $p \in (0,1]$. 

Denote $|\msT| = |\det A|$ and let $\mcQ$ and $\mcT$ be as defined above. Suppose $K_a = [-a, a]^d$, for $a>0$, is a cube in $\R^d$ containing $Q$. Then we can define the system: 
\begin{align*}
    \lbrace 
    \eta_{\vec{m},\msT} \; : \;
    \widehat{\eta}_{\vec{m},\msT} =  \bapu_\msT e_{\vec{m},\msT},  \ \vec{m} \in \Z^d, \msT \in \mcT \rbrace,
\end{align*}
where $\bapu_\msT$ is a squared BAPU and
\begin{align*}
    e_{\vec{m},\msT}(\xi) = \frac{\chi_{K_a} (\msT^{-1} \xi) e^{i \frac{\pi}{a} \vec{m} \cdot \msT^{-1}\xi}}{\sqrt{|2a|^d |\msT|}}, \ \vec{m} \in \Z^d, \msT \in \mcT.
\end{align*}
It follows that the system of $\eta_{\vec{m},\msT}$'s is a Parseval frame of $L^2(\R^d)$~\cite{borup2007frame}. Then, if we denote:
\begin{align*}
    \eta_{\vec{m},\msT}^{(p)} = |\msT|^{\frac{1}{2} - \frac{1}{p}} \eta_{\vec{m},\msT},
\end{align*}
we can characterize all of the decomposition spaces $D(\mcQ, L^p, \mcY_w)$ using the associated frame coefficients $\lbrace \< f, \eta_{\vec{m},\msT}^{(p)} \> : \vec{m}, \msT \rbrace$ in the way specified by the following result.

\begin{proposition}[{\cite[Proposition 3]{borup2007frame}}]
Let $\mcQ = \lbrace \msT Q : \msT \in \mcT \rbrace$ be a structured admissible covering, $\mcY_w$ a solid (quasi-)Banach sequence space on $\mcT$ with a $\mcQ$-moderate weight%
\footnote{A strictly positive function $w$ is $\mcQ$-moderate if exists $C > 0$ such that $w(x) \leqslant C w(y)$ for all $x,y \in Q_i \in \mcQ$ and all $i \in I$. A $\mcQ$-moderate \textit{weight} on $I$ is derived as the sequence $\left( w_i = w(x_i) \right)_{i \in I}$, where each $x_i \in Q_i$.}  $w$. %
For $0 < p \leqslant \infty$ we have the characterization:
\begin{align*}
    \| f \|_{D(\mcQ, L^p, \mcY_w)} \simeq \Bigg\| \bigg( \Big( \sum_{\vec{m} \in \Z^d} |\< f, \eta_{\vec{m},\msT}^{(p)} \> |^p \Big)^{\frac{1}{p}} \bigg)_{\msT \in \mcT} \Bigg\|_{\mcY_w}.
\end{align*}
\end{proposition}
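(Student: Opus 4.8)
The plan is to reduce the abstract decomposition-space norm to a statement about local Fourier series, patch by patch, and then reassemble the pieces using the sequence space $\mcY_w$. Recall that $\norm{f}_{D(\mcQ, L^p, \mcY_w)}$ is defined (up to equivalence of BAPUs) as the $\mcY_w$-norm of the sequence $\big(\norm{g_\msT}_{L^p}\big)_{\msT \in \mcT}$ of local $L^p$-norms of the Fourier-localized pieces $g_\msT := \widecheck{\bapu_\msT \widehat f}$, each of which is bandlimited with spectrum contained in $\msT Q \subset \msT K_a$. The first step is to identify the frame coefficients with local Fourier coefficients: since $\widehat{\eta}_{\vec{m},\msT} = \bapu_\msT e_{\vec{m},\msT}$ and, by construction, $\lbrace e_{\vec{m},\msT} \rbrace_{\vec{m} \in \Z^d}$ is an orthonormal basis of $L^2$ on the cube $\msT K_a$, we have
\[
    \< f, \eta_{\vec{m},\msT}^{(p)} \> = |\msT|^{\frac{1}{2}-\frac{1}{p}} \< \bapu_\msT \widehat f, e_{\vec{m},\msT} \>,
\]
so that, for each fixed $\msT$, the numbers $\< \bapu_\msT \widehat f, e_{\vec{m},\msT} \>$ are precisely the Fourier coefficients of the compactly supported function $\bapu_\msT \widehat f$ with respect to this local exponential basis.

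The heart of the argument is the local two-sided estimate
\[
    |\msT|^{\frac{1}{2}-\frac{1}{p}} \Big( \sum_{\vec{m} \in \Z^d} \abs{\< \bapu_\msT \widehat f, e_{\vec{m},\msT} \>}^p \Big)^{\frac{1}{p}} \simeq \norm{g_\msT}_{L^p},
\]
with constants independent of $\msT$. For $p = 2$ this is immediate from Parseval's identity on the cube $\msT K_a$, and the normalization factor cancels. For general $0 < p \leq \infty$ I would use that $g_\msT$ is bandlimited, with spectrum in the fixed cube $\msT K_a$, and invoke Nikolskii- and Plancherel--Polya-type inequalities for bandlimited functions: the $L^p$-norm of such a function is comparable to a suitably weighted $\ell^p$-norm of its samples on a lattice dual to $K_a$, which is in turn comparable to the $\ell^p$-norm of its local Fourier coefficients. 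Performing the affine change of variables $\xi \mapsto \msT^{-1} \xi$ reduces every patch to the reference cube $K_a = [-a,a]^d$; the Jacobian of this substitution is exactly compensated by the factor $|\msT|^{\frac{1}{2}-\frac{1}{p}}$, which converts the $L^2$-normalized coefficients into their $L^p$-normalized counterparts.

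The main obstacle is uniformity: the equivalence constants in these Nikolskii-type inequalities must not depend on $\msT \in \mcT$. This is precisely where the structured-covering hypothesis enters. The bound $\norm{A_l^{-1} A_k}_{\ell^\infty} < C$ on overlapping boxes guarantees that, after reduction to the reference cube, the affine maps in $\mcT$ all have uniformly comparable geometry, so a single reference inequality on $K_a$ transfers to every patch with a common constant; and the $\mcQ$-moderateness of $w$ ensures that the freedom in choosing the sampling points $x_i \in Q_i$ defining the weight only perturbs it by a bounded multiplicative factor. Finally, I would assemble the global estimate by applying the local equivalence termwise and taking $\mcY_w$-norms over $\msT \in \mcT$, using the solidity (monotonicity) of $\mcY_w$ together with the bounded overlap of the admissible covering $\mcQ$ to control the finitely many cross terms arising from supports $\bapu_{\msT'}$ that meet a given $\bapu_\msT$. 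This yields the claimed equivalence
\[
    \norm{f}_{D(\mcQ, L^p, \mcY_w)} \simeq \Big\| \Big( \big( \sum_{\vec{m} \in \Z^d} \abs{\< f, \eta_{\vec{m},\msT}^{(p)} \>}^p \big)^{\frac{1}{p}} \Big)_{\msT \in \mcT} \Big\|_{\mcY_w}.
\]
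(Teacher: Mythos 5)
The paper does not prove this statement itself --- it is imported verbatim as Proposition~3 of \cite{borup2007frame} --- so the only fair comparison is with that source, and your sketch follows essentially the same route: identify the frame coefficients with local Fourier coefficients of $\bapu_\msT\widehat f$ on the cube $\msT K_a$, establish the two-sided $L^p$--$\ell^p$ equivalence for bandlimited pieces via Plancherel--P\'olya/Nikolskii-type inequalities with the Jacobian of $\xi\mapsto\msT^{-1}\xi$ absorbed by the $|\msT|^{\frac12-\frac1p}$ normalization, and reassemble using solidity of $\mcY_w$ and the bounded overlap of the structured covering. Your outline is correct; the one step you compress into ``up to equivalence of BAPUs'' (that the norm built from the squared BAPU $\bapu_\msT$ is equivalent to the one built from an ordinary BAPU $\gamma_\msT$, which for $p\le 1$ again requires the uniform $|Q_i|^{1/p-1}\|\widecheck{\gamma}_i\|_{L^p}$ bound) is exactly the auxiliary lemma Borup--Nielsen prove separately, so it deserves an explicit reference rather than a parenthesis.
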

This indicates that there is a coefficients space $d(\mcQ, \ell^p, \mcY_w)$ associated with the decomposition space and we can define it as the set of coefficients $c = (c_{\vec{m},\msT})_{\vec{m} \in \Z^d,\msT \in \mcT}$ which satisfy:
\begin{align*}
    \| c \|_{d(\mcQ, \ell^p, \mcY_w)} = \Bigg\| \bigg( \Big( \sum_{\vec{m} \in \Z^d} | c_{\vec{m},\msT} |^p \Big)^{\frac{1}{p}} \bigg)_{\msT \in \mcT} \Bigg\|_{\mcY_w} < \infty.
\end{align*}
If we define the \emph{coefficient operator} $\msK: D(\mcQ, L^p, \mcY_w) \rightarrow d(\mcQ, \ell^p, \mcY_w)$ and the \emph{reconstruction operator} $\msK^*: d(\mcQ, \ell^p, \mcY_w) \rightarrow D(\mcQ, L^p, \mcY_w)$ in the obvious way:
\begin{equation*}
    \msK (f) = \left( \< f, \eta_{\vec{m},\msT}^{(p)} \> \right)_{\vec{m},\msT}, 
    \qquad \text{and} \qquad 
    \msK^* (c) = \sum_{\vec{m},\msT} c_{\vec{m},\msT} \, \eta_{\vec{m},\msT}^{(p)},
\end{equation*}
then both operators are bounded and $\msK^* \msK = \Id_{D(\mcQ, L^p, \mcY_w)}$ \cite{borup2007frame}. From now on we shall use the more familiar names \emph{analysis} and \emph{synthesis operator} when referring to the coefficient $\msK$ and reconstruction $\msK^*$ operators, respectively.\!
\footnote{Historically it seems that the names coefficient and reconstruction operator were used in the theory of atomic decompositions, whereas analysis and synthesis originate from the more classical frame theory of Hilbert spaces. The theory of Banach frames lies at their intersection (see \cite{casazza1999frames, carando2011reconstruction} for more details and context).}

Finally, as shown in \cite{borup2007frame}, by choosing $\mcT$ to be a structured family of affine transformations (where $\msT x := A_\msT x + b_\msT$), $\mcY_w = \ell^q_{w^{\beta}}(\mcT)$, with $w$ a $\mcQ$-moderate function, $\beta \in \R$ and the weights defined by $w^{\beta} = \left( w^\beta(\msT) \right)_{\msT \in \mcT}$, the decomposition spaces form certain \textit{smoothness spaces}, which we denote by $S^\beta_{p,q}(\mcT, w)$. 
For example, with wavelet transform-like dilations $\mcT$ we can identify the smoothness space $S^\beta_{p,q}$ with a suitable Besov space \cite{labate13}. 

In the next section, we construct a similar setting for the cylindrical shearlets.

\subsubsection{Cylindrical shearlet decomposition spaces} \label{sssec:cylShDecompSpace}

We fix the structured admissible covering by using exactly those affine transformations defined for the cylindrical shearlet system \cref{eq:cShearSys} in \cref{ssec:cylShConstruction} and write:
$$ \mcT := \lbrace \msT = \dimFlip^{\iota-1} B^k A^j \, : \, \iota = 1,2, |k| \leq 2^j, j \in \N \rbrace \cup \lbrace M_0 \rbrace \qquad \text{and} \qquad
\mcQ := \lbrace \msT Q_{0,0} \, : \, \msT \in \mcT \rbrace. $$

Now, a squared BAPU can be obtained by cancelling out the normalization term in the cylindrical shearlet system (\ref{eq:cShearSystem}), that is:
\begin{align} \label{eq:cylShBAPU}
    \bapu_{\msT_{j,k,\iota}}(\xi) &= \abs{\det A_{(\iota)}}^{\frac{j}{2}} \hatgen^{(\iota)}_{j,k,0}(\xi) := |\msT|^{\frac{1}{2}} \hatgen_{\msT,0}(\xi).
\end{align}
Hence we define the cylindrical shearlet smoothness space $S^\beta_{p,q}(\R^3)$, for $\beta \in \R, 0 < p,q \leqslant \infty$ and $w(\msT) = 2^j$, as the set of functions $f \in \mcS'(\R^3)$ such that:
\begin{align}
    \| f \|_{S^\beta_{p,q}} &\simeq \Big( \sum_{\msT \in \mcT} \Big( 2^{\beta j} \sum_{\vec{m} \in \Z^3} \big| \< \widehat{f}, |\msT|^{\frac{1}{2} - \frac{1}{p}} \hatgen_{\msT, \vec{m}} \> \big|^p \Big)^{\frac{q}{p}} \Big)^{\frac{1}{q}} \nonumber \\
    &= \Big( \sum_{\msT \in \mcT} \Big( 2^{\beta j} |\msT|^{\frac{p}{2} - 1} \sum_{\vec{m} \in \Z^3} \big| \< \widehat{f}, \hatgen_{\msT, \vec{m}} \> \big|^p \Big)^{\frac{q}{p}} \Big)^{\frac{1}{q}} < \infty.
\end{align}
In particular for $q = p$ and if we denote $\tilde{w} =\tilde{w}_{\beta,p}(\msT) :=  w^\beta (\msT) \, |\msT|^{\frac{p}{2}-1}$ we can define the cylindrical shearlet smoothness space norm:
\begin{align}
    \| f \|_{S^\beta_{p,p}} &\simeq \| \sh (f) \|_{\ell^p(\tilde{w})} = \Big( \sum_{\lambda \in \Lambda} \tilde{w}(\lambda)| \langle f, \psi_\lambda \rangle |^p \Big)^{\frac{1}{p}} <  \infty \quad \forall f \in S^\beta_{p,p},
\end{align}
where the cylindrical shearlet transform $\sh$ from~\eqref{eq:cylShTransform} acts as the analysis operator. Essentially, we expressed the (weighted) $\ell^p$-norm of the cylindrical shearlet coefficients
of $f$ as the norm of $f$ in the cylindrical shearlet smoothness space $S^\beta_{p,p}$.
Moreover, since the cylindrical shearlet system~\eqref{eq:cShearSystem} is a (smooth) Parseval frame (\cref{thm:cylShParsevalFrame}), we have the following result.
\begin{proposition}\label{prop:coercivity}
Let $0 < p \leq 2$ and $\| f \|_{L^2} \geq 1$. For any weight $\tilde{w}$ such that $\inf_{\lambda \in \Lambda} \tilde{w}(\lambda) > 0$, there is $C > 0$ such that
\begin{align} \label{eq:ell(w)bound}
    \|f\|_{L^2} \leq \| \sh (f) \|_{\ell^p} \leq C \| \sh (f) \|_{\ell^p(\tilde{w})}.
\end{align}
In particular, the cylindrical shearlet smoothness space norm is coercive in $L^2(\R^{3})$. 
\end{proposition}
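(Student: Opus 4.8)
The plan is to read off both inequalities from the Parseval frame identity together with elementary properties of (weighted) $\ell^p$ sequence spaces, and then obtain coercivity simply by chaining them. The frame structure enters in exactly one place, and everything else is bookkeeping on sequences.

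First I would invoke \cref{thm:cylShParsevalFrame}: since the discrete cylindrical shearlet system $\lbrace \psi_\lambda : \lambda \in \Lambda \rbrace$ is a Parseval frame for $L^2(\R^3)$, the analysis operator $\sh$ from \eqref{eq:cylShTransform} is an isometry into $\ell^2(\Lambda)$, so that
\begin{equation*}
    \|f\|_{L^2}^2 = \sum_{\lambda \in \Lambda} |\langle f, \psi_\lambda \rangle|^2 = \|\sh(f)\|_{\ell^2}^2
\end{equation*}
for every $f \in L^2(\R^3)$. This identifies the $L^2$-norm with the \emph{unweighted} $\ell^2$-norm of the coefficient sequence. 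For the left inequality I would then use the standard nesting of $\ell^p$ (quasi-)norms: for $0 < p \leq 2$ and any sequence $c = (c_\lambda)$ one has $\|c\|_{\ell^2} \leq \|c\|_{\ell^p}$, which is cleanest to justify by homogeneity (reduce to the case $\|c\|_{\ell^p} = 1$, whence $|c_\lambda| \leq 1$ and thus $|c_\lambda|^2 \leq |c_\lambda|^p$ termwise). Applying this to $c = \sh(f)$ and combining with the Parseval identity gives $\|f\|_{L^2} = \|\sh(f)\|_{\ell^2} \leq \|\sh(f)\|_{\ell^p}$. This bound in fact holds for every $f$; the hypothesis $\|f\|_{L^2} \geq 1$ only serves to make the statement non-vacuous and to fix the scale in the coercivity interpretation.

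For the right inequality the weight enters solely through its uniform positive lower bound. Writing $c_0 := \inf_{\lambda \in \Lambda} \tilde{w}(\lambda) > 0$, the definition of the weighted norm gives
\begin{equation*}
    \|\sh(f)\|_{\ell^p(\tilde{w})}^p = \sum_{\lambda \in \Lambda} \tilde{w}(\lambda)\, |\langle f, \psi_\lambda \rangle|^p \geq c_0 \sum_{\lambda \in \Lambda} |\langle f, \psi_\lambda \rangle|^p = c_0\, \|\sh(f)\|_{\ell^p}^p,
\end{equation*}
so that $\|\sh(f)\|_{\ell^p} \leq C \|\sh(f)\|_{\ell^p(\tilde{w})}$ with $C = c_0^{-1/p}$, a constant depending only on $p$ and $\tilde{w}$ and not on $f$. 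Chaining the two displayed estimates produces the full bound \eqref{eq:ell(w)bound}, and in particular $\|f\|_{L^2} \leq C \|\sh(f)\|_{\ell^p(\tilde{w})}$, which forces $\|\sh(f)\|_{\ell^p(\tilde{w})} \to \infty$ whenever $\|f\|_{L^2} \to \infty$; this is exactly coercivity of the cylindrical shearlet smoothness space norm in $L^2(\R^3)$.

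I do not expect a serious obstacle: the argument is essentially elementary once the Parseval identity is in hand. The only points deserving care are (i) invoking the nesting $\|c\|_{\ell^2} \leq \|c\|_{\ell^p}$ uniformly across the whole range $0 < p \leq 2$, including $p < 1$ where $\|\cdot\|_{\ell^p}$ is merely a quasi-norm — the termwise comparison above is insensitive to this — and (ii) making explicit that it is precisely the \emph{strict} positivity of $c_0 = \inf_\lambda \tilde{w}(\lambda)$ that yields a single constant $C$ independent of $f$, so that the second inequality cannot degenerate.
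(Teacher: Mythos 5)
Your proof is correct and follows essentially the same route as the paper's: Parseval identity to equate $\|f\|_{L^2}$ with $\|\sh(f)\|_{\ell^2}$, the standard nesting $\|c\|_{\ell^2}\leq\|c\|_{\ell^p}$ for $0<p\leq 2$ (you normalize by the $\ell^p$-norm where the paper normalizes by the $\ell^2$-norm, an immaterial difference), and the uniform lower bound on $\tilde{w}$ to absorb the weight with $C=c_0^{-1/p}$. Your side remark that the left inequality holds for all $f$ without the hypothesis $\|f\|_{L^2}\geq 1$ is also accurate.
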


\begin{proof}
First, we use the fact that for $0 < p \leq 2$ 
we have:
\begin{align} \label{eq:l2<lp}
   \| u \|_{\ell^2}^2 \leq \sum_{i = 1}^\infty |u_i|^p = \| u \|_{\ell^p}^p \qquad \forall u \in \ell^2: \|u\|_{\ell^2} \leq 1,
\end{align}
which follows since $|u_i|^2 \leq |u_i|^p \leq 1$ for all $i$. 
Next, notice that for any $\|v\|_{\ell^2} > 1$ we have:
\begin{align}
    1 < \|v\|_{\ell^2}^2 = \|v\|_{\ell^2}^2 \| \tfrac{v}{\|v\|_{\ell^2}} \|_{\ell^2}^2 \leq \|v\|_{\ell^2}^2 \|\tfrac{v}{\|v\|_{\ell^2}} \|_{\ell^p}^p = \|v\|_{\ell^2}^{2-p} \|v \|_{\ell^p}^p \qquad
    &\Longleftrightarrow \quad \|v\|_{\ell^2}^p \leq \|v\|_{\ell^p}^p \nonumber \\
    &\Longleftrightarrow \quad \|v\|_{\ell^2}  \leq \|v\|_{\ell^p}. \label{eq:ell-ineq}
\end{align}
Since cylindrical shearlets form a Parseval frame, we get the unweighted version of~\eqref{eq:ell(w)bound} by using \eqref{eq:ell-ineq}, i.e., $\|f\|_{L^2} = \|\sh (f) \|_{\ell^2} \leq \|\sh (f) \|_{\ell^p}$. 
Then, given any (not necessarily $\mcQ$-moderate) weight $\tilde{w}$, and denoting $c = \inf_{\lambda \in \Lambda} \tilde{w}(\lambda)$, it follows that:
\begin{align*}
    \| f \|_{L^2} \leq \frac{1}{c^{1/p}} c^{1/p} \| \sh (f) \|_{\ell^p} \leq \frac{1}{c^{1/p}} \|\sh (f) \|_{\ell^p(\tilde{w})}.
\end{align*}
This proves the coercivity and inequality~(\ref{eq:ell(w)bound}) for $C = \frac{1}{c^{1/p}}$.
\end{proof}

Since in \cref{sec:dynTomo} we are interested in weights of the form $\tilde{w}(\msT) = w^\beta(\msT) \, |\msT|^{\frac{p}{2}-1}$ with $w(\msT) = 2^j$, we conclude this section proving the next result.

\begin{corollary} \label{cor:betaBound}
For $\tilde{w} = 2^{j\beta} \, |\msT|^{\frac{p}{2}-1}$, if $\beta \geq \frac{5}{4}(2-p)$, then $\|\sh (f) \|_{\ell^p(\tilde{w})}$ is coercive.
\end{corollary}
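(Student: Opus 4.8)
The plan is to read this as an immediate consequence of \cref{prop:coercivity}: that proposition already shows that coercivity of $\|\sh(\cdot)\|_{\ell^p(\tilde w)}$ on $L^2(\R^3)$ holds as soon as $\inf_{\lambda\in\Lambda}\tilde w(\lambda)>0$ (indeed with the explicit constant $C=c^{-1/p}$, $c=\inf_\lambda \tilde w(\lambda)$, and the proof there is explicitly noted to require no $\mcQ$-moderateness of the weight). So the whole task reduces to verifying that the specific weight $\tilde w(\lambda)=2^{j\beta}|\msT|^{\frac p2-1}$ stays bounded away from zero over the entire index set $\Lambda$, and to tracking how this positivity depends on $\beta$ and $p$.

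First I would observe that $\tilde w$ is constant in the shear, cone, and translation indices, depending on $\lambda=(j,k,\iota,\vec m)$ only through the scale $j$. Indeed $|\msT|=|\det A_\msT|$ with $A_\msT=\dimFlip^{\iota-1}B^kA^j$, where $\dimFlip$ is a coordinate swap and $B$ a shear, both volume preserving, so $|\msT|=|\det A|^{\,j}$. Since $A_{(1)}=\diag(4,2,4)$ and $A_{(2)}=\diag(2,4,4)$ both satisfy $|\det A_{(\iota)}|=2^5$, we get $|\msT|=2^{5j}$ at scales $j\ge 0$, while the coarse-scale generator $M_0$ (with $|\det M_0|=1$) contributes a single fixed positive value. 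Hence $\inf_{\lambda\in\Lambda}\tilde w(\lambda)=\min\{\tilde w(M_0),\,\inf_{j\ge 0}\tilde w_j\}$, and positivity of this infimum is governed entirely by the fine scales $j\ge 0$.

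Next I would substitute and collect powers of two, writing $\tilde w_j=2^{j\beta}(2^{5j})^{\frac p2-1}=2^{\,j\,\sigma(\beta,p)}$ with exponent $\sigma(\beta,p)=\beta+5\big(\tfrac p2-1\big)$, an affine, strictly increasing function of $\beta$. The geometric sequence $(2^{j\sigma})_{j\ge0}$ is bounded below by a positive constant if and only if $\sigma(\beta,p)\ge 0$; the only way the infimum can degenerate to $0$ is through the tail $j\to\infty$ when $\sigma<0$. Rearranging $\sigma(\beta,p)\ge 0$ into a lower bound on $\beta$ gives precisely the asserted hypothesis, at which point $\inf_\lambda \tilde w(\lambda)>0$ and \cref{prop:coercivity} applies verbatim to yield coercivity.

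The only genuinely delicate point is the direction of the inequality. Because the scale $j$ is unbounded above (unlike $k$, which is constrained by $|k|\le 2^j$), the binding constraint comes from the high-frequency tail rather than from the coarse scales, so one must require the \emph{exponent} $\sigma$ to be non-negative, not merely the value $\tilde w_0$ at $j=0$ to be positive. Everything else — the determinant bookkeeping and the reduction to a scalar geometric sequence — is routine once \cref{prop:coercivity} is available.
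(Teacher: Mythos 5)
Your overall strategy is exactly the paper's: reduce everything to checking $\inf_{\lambda\in\Lambda}\tilde w(\lambda)>0$ so that \cref{prop:coercivity} applies, note that $\det M_0=\det\dimFlip=\det B=1$ so the weight depends on $\lambda$ only through the scale $j$, and then decide coercivity by the sign of the exponent of the resulting geometric sequence, with the binding constraint coming from the tail $j\to\infty$. All of that matches the paper's proof.

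The final step, however, does not close as written. From $|\msT|=|\det A_{(\iota)}|^{j}=2^{5j}$ you get $\sigma(\beta,p)=\beta+5(\tfrac p2-1)$, and $\sigma(\beta,p)\ge 0$ rearranges to $\beta\ge 5(1-\tfrac p2)=\tfrac52(2-p)$, which is \emph{twice} the asserted threshold $\tfrac54(2-p)$; the claim that this ``gives precisely the asserted hypothesis'' is therefore an arithmetic error. Concretely, for $p=1$ and $\beta=\tfrac54$ the corollary's hypothesis holds while your $\sigma=\tfrac54-\tfrac52<0$, so by your own reduction $\inf_{j}\tilde w(j)=0$ and \cref{prop:coercivity} cannot be invoked. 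The paper's proof instead computes $|\msT|^{\frac p2-1}=2^{\frac52 j(\frac p2-1)}$ --- consistent with the weight $\tilde w(j)=2^{j\beta+\frac52 j(\frac p2-1)}$ given after \eqref{eq:R_pnorm} and with \cref{rmrk:noWeights} --- and that exponent does yield $\beta\ge\tfrac54(2-p)$. Your determinant bookkeeping is the one literally suggested by \eqref{eq:cylShBAPU} (where $|\msT|^{1/2}=|\det A_{(\iota)}|^{j/2}$, hence $|\msT|=2^{5j}$), so the factor-of-two mismatch exposes an inconsistency between the paper's definition of $|\msT|$ and its weight formula; but as submitted your proposal asserts an equivalence that is false for your value of $\sigma$. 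To repair it you must either adopt the paper's exponent $\tfrac52 j(\tfrac p2-1)$, or accept the threshold $\beta\ge\tfrac52(2-p)$ in place of the stated one.
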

\begin{proof}
We need to check when $\inf \tilde{w} > 0$. Since $\det(M_0) = 1$ and $\det(\dimFlip) = 1$, we can focus on the case when $j \geq 1$. Hence the sequence of weights is always monotone 
$\tilde{w}(j) = 2^{\beta j} \, 2^{\frac{5}{2}j \left( \frac{p}{2}-1 \right)} = 2^{j\beta + \frac{5}{2} j \left( \frac{p}{2}-1 \right)}$. It is clear that $\inf_{j \geq 0}\tilde{w}(j) = \lim_{j \to \infty}\tilde{w}(j) = 0$ only when the exponent is negative. Therefore, the necessary condition for $\beta$ is given by:
\begin{align*}
j\beta &+ \frac{5}{2} j \left( \frac{p}{2}-1 \right) \geq 0 \quad
    \Longleftrightarrow \quad \beta \geq \frac{5}{4}(2-p).
\end{align*}
\end{proof}

\begin{remark} \label{rmrk:noWeights}
The usual choice for unweighted $\ell^p$ regularization of the (cylindrical) shearlet coefficients actually corresponds to the limit case $\beta = \frac{5}{4}(2-p)$, i.e., $w^\beta (j) = 2^{\frac{5}{4}(2-p)j}$ in order to have $\tilde{w}(\msT) = 1$ for all $\msT \in \mcT$.
\end{remark}

For the remainder of the paper, we only consider the case $p > 1$ to carry out our theoretical analysis. When $p>1$, the smoothness spaces $S^\beta_{p,p}(\R^3)$ are reflexive Banach spaces, since the spaces $L^p(\R^3)$ and $\mcY_w = \ell^p_{w^{\beta}}(\mcT)$ are both reflexive Banach spaces (cf. \cite[Theorem 3.21]{voigtlaender2023embeddings} and \cite[Theorem 2.8]{feichtinger1985banach}). 
Note also that, by inequality~\eqref{eq:l2<lp} and the fact that the cylindrical shearlet system is a Parseval frame, it follows that convergence in ${S^\beta_{p,p}}$ implies convergence in $L^2$.

\section{Regularizing a dynamic inverse problem with cylindrical shearlets} \label{sec:dynTomo}

In this section, we formulate the inverse problem we want to tackle using cylindrical shearlet regularization.  
To this end, we first introduce the related notation and the necessary assumptions, along with a motivating example (i.e., the inverse problem arising from dynamic Computed Tomography (CT)) which will be used in the numerical simulations in \cref{sec:numericalTests}. 
Then we derive convergence rates considering two scenarios: full measurements and (randomly) subsampled measurements.

\subsection{Dynamic inverse problems}
\label{ssec:dynInvProb}
Consider the following (dynamic) inverse problem:
\begin{equation}\label{eq:ProblemSetting}
\text{given } \quad \gd \in Y \ : \quad \gd(t) = (\cA f^{\dag}) (t) + \delta \epsilon(t) \quad \text{a.e.}\ t \in (0,T), 
\qquad \text{recover } \quad  f^{\dag} \in X.
\end{equation}
We can formulate the problem in a rather general framework assuming that $X$ is a reflexive Banach space, $X \subset L^2(\Omega \times (0,T))$, being $\Omega \subset \R^2$ a bounded set, 
 and $Y = L^2(0,T; \tilde{Y})$, where $\tilde{Y}$ is a Hilbert space. The notation $L^2(0,T; \tilde{Y})$ is used for the Bochner space:
\[
L^2(0,T; \tilde{Y}) = \left\{  g \colon [0,T]  \rightarrow \tilde{Y} \text{ measurable s.t. } \int_0^T \|g(t)\|_{\tilde{Y}}^2\ dt < \infty\right\}.
\]
For later purposes (see \cref{ssec:sampled}), it is convenient to interpret the elements of $\tilde{Y}$ as functions, whose variable lives in the compact set $\mathcal{U}$ and whose values range in a suitable Hilbert space $\mathcal{V}$: i.e.,  $\tilde{Y} = L^2(\mathcal{U};\mathcal{V})$.
We assume that $\cA$ is a bounded linear operator from $X$ to $Y$. 

In the case of our motivating example (i.e., dynamic CT), the operator $\cA$ is the Radon transform, which we briefly introduce here. This definition is the first step towards introducing our final model, which is detailed in the following subsections.

\begin{MotExample}[Radon transform]
\label{motex:RadonTransform}

For a function $f \in L^2(\Omega)$, the Radon transform $\mathcal{R} f$ is defined as follows:
\[
\mathcal{R}f (\theta,s) = \int_{\R} f(s \theta + \sigma \theta^\perp) d\sigma \qquad \theta \in S^1, s \in \R.
\]
The function $\mathcal{R}f$ (the so-called \textit{sinogram}) belongs to the space $H^{1/2}([0,2\pi)\times(-\bar{s},\bar{s}))$ for a suitable $\bar{s} > 0$ depending on $\Omega$ (see \cite[Theorem 2.10]{natterer2001mathematical}). In particular, the range of $\mathcal{R}$ is a subset of $L^2([0,2\pi)\times(-\bar{s},\bar{s}))$, which we may interpret as $L^2(\mathcal{U};\mathcal{V})$ being $\mathcal{U}=[0,2\pi)$ and $\mathcal{V} = L^2(-\bar{s},\bar{s})$.
The dynamic operator $\cA \colon L^2(\Omega \times (0,T))\rightarrow Y$ associated with the Radon transform is then defined by $(\cA f)(t) = \mathcal{R}(f(\cdot,t))$ for almost every time $t$.

\end{MotExample}

Next, going back to problem~\eqref{eq:ProblemSetting}, for the error term $\delta \epsilon$, with $\delta > 0$, we consider two different scenarios, for which we formulate different assumptions:
\begin{itemize}
    \item \textit{Deterministic} noise. In this case, $\epsilon \in Y$ is a generic (unknown) function perturbing the measurement, for which we only assume that
    \begin{equation}
     \| \epsilon \|_Y \leq 1.
        \label{eq:noise_det}
    \end{equation}
    This is the most common environment in which classical regularization techniques for inverse problems have been introduced and developed (see~\cite{engl1996regularization}).
    \item \textit{Statistical} noise. In this scenario, the measurement acquisition is modeled as a random experiment, and the uncertainty $\epsilon$ is modeled as a random process in time such that, at almost every $t$, $\epsilon(t)$ is a random variable taking values in $\tilde{Y}$. 
    In this case, analogously to \cite{bubba2021}, we assume that at almost every time $t$ the variable $\| \epsilon(t)\|_{\tilde{Y}}$ 
    is sub-Gaussian (see \cite[Definition 2.5.6]{vershynin2018high}) 
    and
    \begin{equation}
    \E[\epsilon(t)] = 0 \quad \text{and} \quad \big\| \| \epsilon(t) \|_{\tilde{Y}}\big\|_{\text{sG}}  \leq 1 \qquad \text{a.e.}\ t \in (0,T).
        \label{eq:noise_stat}
    \end{equation}

\end{itemize}

It is clear that the deterministic case can be interpreted as a single realization of the statistical one, provided that the noise distribution is bounded. In  \cref{ssec:regularizingCylindrical} and \cref{ssec:nonsampled}, our analysis will consider both scenarios, whereas in  \cref{ssec:sampled} we will restrict only to the statistical one in the case of (randomly) sampled measurements. 
We point out that, in the statistical noise scenario, the assumption that $\epsilon(t)$ is a random variable on $\tilde{Y}$ can be restrictive: for example, as shown in \cite{kekkonen2014analysis}, Gaussian white noise cannot be interpreted as a random variable on the infinite-dimensional space $L^2(\Omega)$. In order to encompass a larger family of noise models, the reader is referred to the approach presented in \cite{burger2018large}.

\subsection{Regularization with cylindrical shearlets}
\label{ssec:regularizingCylindrical}
In this section, we show how cylindrical shearlets can be employed to define regularization strategies for dynamic inverse problems. Our inspiration comes from applications in the non-dynamic setting~\cite{chambolle1998nonlinear, rantala2006wavelet}:
indeed, in many static, imaging problems 
it is common practice to assume \textit{a priori} that the target is sparse 
with respect to some wavelet representation. This is modeled as a variational problem in a suitable Besov space (i.e., employing Besov norms as a regularizer).

In our setup, motivated by the observation that cylindrical cartoon-like functions (i.e., ``dynamic images'') are optimally sparse with respect to cylindrical shearlet frames (see \cref{ssec:slantedData}), we consider the smoothness spaces $S_{p,p}^\beta(\R^3)$ as the natural 
embedding space for this class of problems. 

In principle, the theoretical analysis in \cref{sec:cylShearlets} would hold for any $p>0$, but we are only interested here in the case where $p<2$ (to enforce sparsity) and $p>1$ (to leverage strong convexity).
Notice that in place of a smoothness space as the ones introduced in \cref{sssec:cylShDecompSpace} we can consider its subspace consisting of those functions which are supported on the compact set $\overline{\Omega}\times [0,T]$. 
We therefore set
\begin{equation}
    X = S_{p,p}^\beta(\R^3) \cap \lbrace  f: \supp(f) \subset \overline{\Omega} \times [0, T] \rbrace, \label{eq:set_X}
\end{equation}
with $1<p<2$ and $\beta \geq \frac{5}{4}(2-p)$. 
As remarked above, any converging sequence in $X$ must converge also in $L^2$-norm implying that the limit must also be supported on the compact set $\overline{\Omega}\times [0,T]$. Hence $X$ is a closed subspace of $S_{p,p}^\beta (\R^3)$.
By \cref{cor:betaBound} $X$ is also continuously embedded in $L^2(\Omega \times (0,T))$, 
and we can define the following minimization problem:
\begin{equation}
    \fda = \argmin_{f\in X} \Jda(f) := \argmin_{f \in X} \left\{
\frac{1}{2} \ \| \cA f 
    - \gd\|^2_{Y} + \alpha R(f)
\right\},
\label{eq:regularized}
\end{equation}
where $\Jda \colon X \rightarrow \R$ and we choose 
\begin{equation}
    R(f) = \frac{1}{p} \| f \|_X^p = \frac{1}{p} \| \sh  (f) \|^p_{\ell^p(\tilde{w})},
\label{eq:R_pnorm}
\end{equation}
with the weights $\tilde{w}_\lambda = \tilde{w}(j) = 2^{j\beta + \frac{5}{2} j\left( \frac{p}{2}-1\right)}$, being $\lambda \in \Lambda$ the single index describing the shearlet elements, and $j=|\lambda|$ its scale.

We first observe that problem \eqref{eq:regularized} is well-posed, and in particular that the minimizer $\fda$ always exists and is unique. The next result proves this for both proposed noise scenarios.
\begin{proposition}\label{prop:well-posed-regu}
Let $X$ be as in \eqref{eq:set_X} with $1<p<2$ and $\beta \geq \frac{5}{4}(2-p)$, and let $\cA$ be a bounded operator from $X$ to $Y$. Define $\gd = \cA f^\dag + \delta \epsilon$, with $\delta >0$ and $\alpha > 0$. Then:
\begin{itemize}
    \item If $\epsilon \in Y$ satisfies \eqref{eq:noise_det}, there exists a unique solution $\fda$ of \eqref{eq:regularized}.
    \item If $\epsilon$ is a random process on $Y$ satisfying \eqref{eq:noise_stat}, the solution $\fda$ of \eqref{eq:regularized} is a uniquely defined random variable on $X$.
\end{itemize} 
\end{proposition}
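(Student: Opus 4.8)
The plan is to establish the deterministic bullet by the direct method of the calculus of variations together with strict convexity for uniqueness, and then to obtain the statistical bullet by applying the deterministic result pathwise and adding a measurability argument. Throughout I would exploit that, for $1 < p < 2$, the space $X$ is a reflexive Banach space (as recalled just before the statement), that $X$ embeds continuously in $L^2(\Omega \times (0,T))$ by \cref{cor:betaBound}, and that $\sh$ is the analysis operator of the Parseval frame from \cref{thm:cylShParsevalFrame}.

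For existence of a minimizer, I would verify the three standard hypotheses of the direct method for $\Jda$. First, $\Jda$ is proper and bounded below by $0$, since both summands are nonnegative and $\Jda(0)$ is finite. Second, $\Jda$ is coercive on $X$: because the data-fidelity term is nonnegative and $R(f) = \frac{1}{p}\|f\|_X^p$, one has $\Jda(f) \geq \frac{\alpha}{p}\|f\|_X^p$, so sublevel sets are bounded in $X$. Third, $\Jda$ is weakly lower semicontinuous: the map $f \mapsto \cA f$ is bounded linear, hence weak-to-weak continuous, so composing with the continuous convex square norm on the Hilbert space $Y$ gives a weakly lower semicontinuous data term, while $R$ is convex and continuous, hence weakly lower semicontinuous. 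Given a minimizing sequence, coercivity forces boundedness in $X$; by reflexivity a subsequence converges weakly, and weak lower semicontinuity shows its limit attains the infimum.

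For uniqueness I would argue that $\Jda$ is strictly convex. The data term is convex, being a convex function composed with an affine map, so it suffices that $R$ be strictly convex. Writing $R(f) = \frac{1}{p}\sum_{\lambda \in \Lambda} \tilde{w}_\lambda \, |\langle f, \psi_\lambda \rangle|^p$, each summand is strictly convex in $\langle f, \psi_\lambda \rangle$ since $t \mapsto |t|^p$ is strictly convex for $p > 1$, and $\tilde{w}_\lambda > 0$ by \cref{cor:betaBound}. Because the cylindrical shearlet system is a Parseval frame, $\|\sh(f)\|_{\ell^2} = \|f\|_{L^2}$, so $\sh$ is injective; hence $f \neq g$ forces $\langle f, \psi_\lambda \rangle \neq \langle g, \psi_\lambda \rangle$ for at least one $\lambda$, and strict convexity of $R$, and thus of $\Jda$, follows. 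A strictly convex functional has at most one minimizer, which with existence settles the first bullet.

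For the statistical case, assumption \eqref{eq:noise_stat} guarantees $\epsilon(t) \in \tilde{Y}$ for almost every $t$, so that $\gd \in Y$ almost surely, and the deterministic argument then yields a unique minimizer $\fda$ for almost every realization. It remains to check that $\fda$ is a genuine $X$-valued random variable. The key point is that the solution map $\gd \mapsto \fda$ is continuous, which is the standard stability estimate for Tikhonov-type regularization with a strictly convex coercive penalty and follows by comparing the energies $\Jda$ at two data functions and invoking the uniform convexity of the $p$-homogeneous penalty for $1 < p < 2$. Measurability of $\epsilon \mapsto \gd$ then composes with continuity of $\gd \mapsto \fda$ to make $\fda$ measurable. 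I expect the main obstacle to be precisely this last step: one must verify carefully that the minimizer depends continuously, hence measurably, on the data in the Bochner-space setting, rather than merely that it exists pathwise.
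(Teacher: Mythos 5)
Your proof is correct, and for the deterministic bullet it is essentially the paper's argument: existence by coercivity (via \cref{cor:betaBound}), reflexivity of $X$ and weak lower semicontinuity, and uniqueness from strict convexity of the penalty for $p>1$ (you in fact justify strict convexity of $R$ more carefully than the paper does, using injectivity of $\sh$ from the Parseval frame property). Where you genuinely diverge is the measurability step in the statistical bullet. The paper does \emph{not} prove continuity of the data-to-solution map: it applies Aumann's measurable selection principle (citing \cite[Lemma A.3.18]{steinwart2008support}) to the pathwise minimization, which requires only measurability of $\omega \mapsto \Jda(f;\omega)$ for fixed $f$ together with uniqueness of the minimizer, and sidesteps any stability analysis. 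Your route instead composes measurability of $\omega \mapsto \gd(\omega)$ with continuity of $\gd \mapsto \fda$. This is viable, but it is the more expensive tool: to get norm continuity of the solution map you must run the standard Tikhonov stability argument and then upgrade weak convergence of minimizers to strong convergence, which uses that $X$, normed by $\|\sh(\cdot)\|_{\ell^p(\tilde w)}$, is uniformly convex (hence has the Radon--Riesz property) for $1<p<2$ --- a point you gesture at but would need to spell out. A small simplification available to you: since $X$ is separable and reflexive, the Borel $\sigma$-algebras of the weak and norm topologies coincide, so weak (or demi-) continuity of the solution map already suffices for measurability, and that follows from the stability argument without invoking uniform convexity. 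In exchange for the extra work, your approach yields continuous dependence of $\fda$ on the data, which is a stronger conclusion than the paper extracts; the paper's selection-principle argument is lighter and would survive even in settings where the minimizer is not unique or the solution map is not continuous.
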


\begin{proof}
Let us first consider the deterministic scenario. In this case, the existence of a minimizer is guaranteed by standard arguments, since the functional $\Jda \colon X \rightarrow \R$ is lower semi-continuous and coercive (thanks to \cref{cor:betaBound}), thus its sublevel sets are bounded and, since $X$ is a reflexive Banach space, they are also compact with respect to the weak topology.

Moreover, $\Jda$ is the sum of a convex quadratic term and a strictly convex term (since $p>1$): as a consequence, its minimizer is also unique. \\
In the statistical scenario, at each time the noise $\epsilon(t)$ is a random variable on $\tilde{Y}$, namely, a measurable function on a suitably defined probability space. We can use the same argument as above to show that, for every realization of the process $\epsilon$, a minimizer of $\Jda$ exists in $X$. The function mapping the probability space to the minimizer $\fda$ is also measurable, thanks to Aumann's selection principle (see \cite[Lemma A.3.18]{steinwart2008support}).
\end{proof}

The next step is showing that \eqref{eq:regularized} represents a regularization strategy for the inverse problem \eqref{eq:ProblemSetting}. To do so, we shall prove that if the noise $\delta$ reduces, there exists a selection rule $\alpha(\delta)$ such that the solution $f^\delta_{\alpha(\delta)}$ of the variational problem converges, in a suitable metric, towards the solution $f^\dag$. This is the content of the next section, where we formally state this result, providing also convergence rates.

\subsection{Full measurements: convergence rates}
\label{ssec:nonsampled}

In this section, we provide quantitative estimates regarding the convergence of the regularized solution $\fda$ to the true solution $f^\dag$, according to a suitable a priori choice of parameter $\alpha(\delta)$. Our approach shares many similarities with the one in \cite{burger2018large} (e.g., leading to Corollary 3.12). The main difference with respect to their result is that we are considering a simpler model for the noise, namely that it belongs to $Y$ (deterministic noise) or is a random process in $Y$(statistical noise). One should compare our results with the one in \cite[Corollary 3.12]{burger2018large} by setting $r_2 = 0$ in their estimate.

Since the variational regularization problem \eqref{eq:regularized} involves the minimization of a convex functional, we employ 
tools from convex analysis to address it. In particular, we use the symmetric Bregman distance of the functional $R(f) = \frac{1}{p} \| f \|_X^p$ to measure the distance between $\fda$ and $f^\dag$. We briefly recall its definition. First, for a convex functional $R$, the subdifferential $\partial R(f)$ at a point $f \in X$ consists of:
\[
\partial R (f) = \{ r \in X^*: R(\tilde{f}) \geq R(f) + \langle r, \tilde{f}- f \rangle_* \quad \forall \tilde{f} \in X \},
\]
where $\langle\cdot,\cdot \rangle_*$ denotes the standard pairing between the (reflexive) Banach space $X$ and its dual $X^*$. If the functional $R$ is differentiable (as in our case, since the norm in a reflexive Banach space is Fr\'{e}chet differentiable, see \cite[Remark 2.38]{schuster2012regularization}), then $\partial R(f) = \{ \nabla R(f) \}$. For $f,\tilde{f} \in X$, 
we can define the symmetric Bregman distance between $f$ and $\tilde{f}$ as: 
\[
D_R(\tilde{f},f) = \langle \nabla R(\tilde{f}) - \nabla R(f), \tilde{f} - f \rangle_*.
\]

The purpose of the next result is to show that $D_R(\fda,f^\dag)$ converges to $0$, also providing some convergence rates.
To do so, we need to introduce some additional assumption on the solution $f^\dag$. Within this framework, we limit ourselves to the simplest and most classical assumption, namely the so-called \textit{source condition}:
\begin{equation}
\exists \ w^\dag \in Y \, : \, \nabla R(f^\dag) = \cA^* w^\dag. 
    \label{eq:SC}
\end{equation}
The adjoint operator $\cA^*\colon Y \rightarrow X^*$ is bounded thanks to the assumptions on $\cA$, and is defined by the equality $\langle \cA^* f, g \rangle_{X^*} = \langle f, \cA g\rangle_Y$, for all $f \in Y$ and $g \in X$. 
\begin{proposition}
\label{prop:conv_rate_full}
Let $X$ be as in \eqref{eq:set_X} with $1<p<2$ and $\beta \geq \frac{5}{4}(2-p)$, and let $\cA$ be a bounded operator from $X$ to $Y$. Suppose that $f^\dag$ satisfies the source condition \eqref{eq:SC}. Let $\gd = \cA f^\dag + \delta \epsilon$, $\delta >0$, and $\fda$ the solution of \eqref{eq:regularized} with $\alpha > 0$. Then, if $\epsilon \in Y$ satisfies \eqref{eq:noise_det}, it holds:
\begin{equation}
D_R(\fda,f^\dag) \leq 2\alpha \| w^\dag\|_Y^2 + \frac{2\delta^2}{\alpha}.
    \label{eq:conv_rate_det}
\end{equation}
If $\epsilon$ is a random process on $Y$ satisfying \eqref{eq:noise_stat} it holds:
\begin{equation}
\E[D_R(\fda,f^\dag)] \leq 2\alpha \| w^\dag\|_Y^2 + \frac{2\delta^2}{\alpha}
    \label{eq:conv_rate_stat}
\end{equation}
\end{proposition}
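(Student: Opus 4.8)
The plan is to prove this via the standard variational comparison argument, treating the deterministic bound first and then deducing the statistical bound by taking expectations. The starting point is the \emph{minimality} of $\fda$: since $\fda$ minimizes $\Jda$, we have $\Jda(\fda) \leq \Jda(f^\dag)$, which after expanding the two terms of the functional and using $\gd = \cA f^\dag + \delta\epsilon$ yields
\begin{equation*}
\frac{1}{2}\|\cA\fda - \gd\|_Y^2 + \alpha R(\fda) \leq \frac{1}{2}\|\delta\epsilon\|_Y^2 + \alpha R(f^\dag).
\end{equation*}
First I would rearrange this to isolate the regularization gap $\alpha(R(\fda) - R(f^\dag))$ and control it by the data terms.

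The next key step is to bring in the symmetric Bregman distance. By its definition, $D_R(\fda,f^\dag) = \langle \nabla R(\fda) - \nabla R(f^\dag), \fda - f^\dag\rangle_*$. I would exploit the source condition \eqref{eq:SC}, namely $\nabla R(f^\dag) = \cA^* w^\dag$, to rewrite the term involving $\nabla R(f^\dag)$ using the adjoint relation $\langle \cA^* w^\dag, \fda - f^\dag\rangle_* = \langle w^\dag, \cA(\fda - f^\dag)\rangle_Y$. The term $\langle \nabla R(\fda), \fda - f^\dag\rangle_*$ is then handled through the optimality condition for $\fda$: the Euler--Lagrange equation for \eqref{eq:regularized} gives $\alpha\nabla R(\fda) = \cA^*(\gd - \cA\fda)$, so that $\alpha\langle \nabla R(\fda), \fda - f^\dag\rangle_* = \langle \gd - \cA\fda, \cA(\fda - f^\dag)\rangle_Y$. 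Combining these, $\alpha D_R(\fda,f^\dag)$ becomes a sum of inner products in $Y$ that can be reorganized around the residual $\cA\fda - \gd$ and the noise $\delta\epsilon$.

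The core estimate is then a sequence of Cauchy--Schwarz and Young's inequality applications. Writing $\cA(\fda - f^\dag) = (\cA\fda - \gd) + \delta\epsilon$, the expression for $\alpha D_R$ reduces to controlling terms like $\langle \gd - \cA\fda, \cA\fda - \gd\rangle_Y$ (a negative definite term giving $-\|\cA\fda - \gd\|_Y^2$), together with cross terms involving $\delta\epsilon$ and $w^\dag$. Using Young's inequality $\langle a,b\rangle_Y \leq \frac{1}{2}\|a\|_Y^2 + \frac{1}{2}\|b\|_Y^2$ with suitably weighted splittings, the negative residual norm absorbs the bad cross terms, and what survives is exactly $2\alpha\|w^\dag\|_Y^2 + 2\delta^2\|\epsilon\|_Y^2/\alpha$ (up to the arithmetic of choosing weights). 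Invoking the deterministic noise bound \eqref{eq:noise_det}, $\|\epsilon\|_Y \leq 1$, delivers \eqref{eq:conv_rate_det}.

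For the statistical bound \eqref{eq:conv_rate_stat}, the same chain of inequalities holds pathwise for each realization of $\epsilon$, so I would simply take the expectation of the resulting inequality. The only quantity depending on the random noise is $\delta^2\|\epsilon\|_Y^2 = \delta^2\int_0^T \|\epsilon(t)\|_{\tilde Y}^2\,dt$, and I would bound $\E\|\epsilon\|_Y^2$ using the sub-Gaussian assumption \eqref{eq:noise_stat}: since $\big\|\,\|\epsilon(t)\|_{\tilde Y}\big\|_{\text{sG}} \leq 1$ a.e., standard sub-Gaussian moment bounds give $\E[\|\epsilon(t)\|_{\tilde Y}^2] \lesssim 1$, and integrating over $(0,T)$ (via Fubini) keeps the second-moment controlled by a constant, matching the right-hand side. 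The main obstacle I anticipate is \emph{not} the convexity manipulations—those are routine—but rather the careful bookkeeping of the Young's inequality weights so that the residual norm term is fully absorbed and the constants come out as the stated $2\alpha$ and $2/\alpha$; getting these exact constants (rather than merely some constants) requires choosing the splitting parameters precisely.
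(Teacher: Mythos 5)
Your proposal is correct and follows essentially the same route as the paper: the Euler--Lagrange optimality condition paired in duality with $\fda - f^\dag$, the source condition \eqref{eq:SC} transferred through the adjoint, Young's inequality to absorb the cross terms into the quadratic residual, and a pathwise argument plus sub-Gaussian moment bounds for the expectation in the statistical case. The opening minimality comparison $\Jda(\fda)\leq\Jda(f^\dag)$ is superfluous (the optimality condition alone drives the argument, as in the paper), and the only cosmetic difference is that you absorb the cross terms into $\|\cA\fda-\gd\|_Y^2$ rather than $\|\cA(\fda-f^\dag)\|_Y^2$, which still yields the stated constants.
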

\begin{proof}
Let us first consider the deterministic noise, $\epsilon \in Y$. Since the functional $\Jda$ in \eqref{eq:regularized} is differentiable and strictly convex, the minimizer $\fda$ satisfies the optimality condition:
\[
\cA^*(\cA \fda - \gd) + \alpha \nabla R(\fda) = 0
\]
Taking a duality product with $\fda - f^\dag$ and using the expression of $\gd$, we get
\[
\| \cA (\fda - f^\dag)\|_Y^2 + \alpha \langle \nabla R(\fda), \fda - f^\dag \rangle_* = \delta \langle \cA^* \epsilon, \fda - f^\dag \rangle_*.
\]
Subtracting $\alpha\langle \nabla R(f^\dag), \fda - f^\dag \rangle_*$ on both sides, we get
\[
\| \cA (\fda - f^\dag)\|_Y^2 + \alpha D_R(\fda,f^\dag) = \alpha\langle \nabla R(f^\dag), \fda - f^\dag \rangle_* + \delta \langle \cA^* \epsilon, \fda - f^\dag \rangle_*.
\]
Now, using the source condition and the definition of the adjoint,
\[
\| \cA (\fda - f^\dag)\|_Y^2 + \alpha D_R(\fda,f^\dag) =\langle \alpha w^\dag, \cA (\fda - f^\dag) \rangle_Y + \langle \delta \epsilon, \cA(\fda - f^\dag) \rangle_Y.
\]
Finally, using Young's inequality on both terms on the right-hand side, we get
\begin{equation}
\frac{1}{2}\| \cA (\fda - f^\dag)\|_Y^2 + \alpha D_R(\fda,f^\dag) \leq 2 \alpha^2 \| w^\dag \|_Y^2 + \delta^2 \| \epsilon \|_Y^2,
    \label{eq:final}
\end{equation}
from which \eqref{eq:conv_rate_det} immediately follows. 
In the statistical noise scenario, \eqref{eq:conv_rate_stat} holds for every realization of $\epsilon$: by taking the expectation on both sides, \eqref{eq:conv_rate_stat} follows immediately from \eqref{eq:noise_stat} and from the bound on the moments of sub-Gaussian random variables in terms of their sub-Gaussian norm, see \cite[Proposition 2.5.2]{vershynin2018high}.
\end{proof}
In both estimates, by choosing $\alpha = \alpha(\delta) = \frac{\delta}{\| w^\dag \|}$, we get:
\begin{equation}\label{eq:conv_rate_full}
D_R(\fda,f^\dag)\leq 2 \| w^\dag\|_Y \delta
\qquad \text{and} \qquad \E[D_R(\fda,f^\dag)] \leq 2 \| w^\dag\|_Y \delta.    
\end{equation}
Note that the rates in \cref{prop:conv_rate_full} do not depend on $p$, but the metric $D_R$ does.

\begin{remark}[non-negativity constraint]
In many (static or dynamic) imaging applications, including X-ray tomography, it is known \textit{a priori} that the desired solution $f^\dag$ is non-negative and including this information in the variational problem  \eqref{eq:regularized} is fundamental to obtain superior reconstruction results. This leads to the following minimization problem:
\begin{equation}\label{eq:regularized_NN}
\argmin_{f \in X} \left\{
\frac{1}{2} \ \| \cA f 
    - g^{\delta}\|^2_{Y} + \alpha R(f)+ \iota_+(f)
\right\},
\end{equation}
where $\iota_+$ is the indicator function of the non-negative orthant in $X \subset L^2(\Omega \times(0,T))$, namely:
\[
    \iota_+(f) = \begin{cases}
       0 &\text{if }\;  f \geq 0 \ \text{ a.e. in }\Omega \times (0,T) \\ 
       +\infty &\text{otherwise}.
    \end{cases}
    \]

The statement of \cref{prop:conv_rate_full} still holds in the case of the minimization problem~\eqref{eq:regularized_NN}: it is easy to check that the Bregman distance associated with $R(\cdot)$ is the same of the Bregman distance associated with $R(\cdot)+\iota_+(\cdot)$, when the arguments are all non-negative.
\end{remark}

\subsection{Sampled measurements: convergence rates}
\label{ssec:sampled}

In many practical instances, full measurements are not at disposal, and only a subsampled version of $\gd$ is available. \\
Therefore, we now focus on the case of subsampled measurements, and in particular those sampled according to a random procedure.
In detail, we assume that, at almost every time $t$, instead of the full measurement $(\cA f)(t)$ which should be a function from $\mathcal{U}$ to $\mathcal{V}$, and particularly in $\tilde{Y}=L^2(\mathcal{U};\mathcal{V})$, we only have $N$ point evaluations of it, 
corresponding to the values $u_1(t),\ldots,u_N(t)$ which are randomly sampled.
Since the point evaluation of a function in $\tilde{Y}$ 
is not well-defined, we introduce the following assumptions on the operator $\cA$:
\begin{itemize}
    \item[(O1)] $\cA: X \rightarrow \mathcal{Z}$ where $\mathcal{Z} = L^2(0,T; \mathcal{C}(\mathcal{U},\mathcal{V}))$. 
    \item[(O2)] $\cA$ is continuous from $X$ to $\mathcal{Z}$ and $\| \cA\|_{X \rightarrow \mathcal{Z}} \leq 1$.
\end{itemize}
Assumptions (O1) allows to evaluate $(\cA f)(t)$ at any point $u \in\mathcal{U}$, for $f \in X$, whereas (O2) will be useful later.

Let us consider again our motivating example, introduced in \cref{ssec:dynInvProb} (see Step I, Radon transform). Our goal is to deal with a subsampled version of the sinogram $\mathcal{R}f$, in which the angles are randomly selected.

\begin{MotExample}[Semi-discrete Radon transform] 
The Radon transform $\mathcal{R}$ does not allow for a continuous point-wise sampling in the variable $\theta$. Following \cite{bubba2021,Bubba22}, a possible alternative strategy is to consider a semi-discrete version of the Radon transform (by introducing a discretization of the variable $s$), which ensures higher regularity with respect to the angular variable $\theta$ than the operator $\mathcal{R}$. In particular, we set the variable $s$ in a discrete space which corresponds to modeling the X-ray attenuation measurements with a finite-accuracy detector, consisting of $\Ndtc$ cells. To this end, we introduce a uniform partition $\{I_1, \ldots, I_{\Ndtc}\}$ of the interval $(-\bar{s}, \bar{s})$, where we denote by $s_i$ the midpoint of each interval $I_i$ and take a smooth positive function $\varrho$ of compact support within $(-1,1)$ such that $\int_{-1}^1 \varrho(x) dx = 1$. The semi-discrete Radon transform is then an operator $\radonSD \colon L^2(\Omega) \rightarrow L^2([0,2\pi); \R^{\Ndtc})$ such that, for any $f$ and $\theta$, each component of the vector $(\radonSD f)(\theta) \in \R^{\Ndtc}$ can be written as:
\begin{equation}
	\label{eq:Radon_model_eqL}
	[(\radonSD f)(\theta)]_i = 
	\int_{I_i} \int_{\R} f(s \theta + \sigma \theta^\perp) \varrho\left(\frac{s-s_i}{|I_i|}\right) d\sigma ds= \int_{\R^2} f(x) \varrho_i(x,\theta) dx,
\end{equation}
being $\varrho_i(x,\theta) = \varrho\left(\frac{x_1\cos(\theta)+x_2 \sin(\theta)-s_i}{|I_i|}\right)$. It is easy to check (see \cite[Section 6.1]{bubba2021}) that if $f \in L^2(\Omega)$, then its semi-discrete sinogram $\radonSD f$ belongs to the space $\mathcal{C}((0,2\pi);\R^{\Ndtc})$.

Finally, when considering the dynamic operator $\cradonSD$ associated with the semi-discrete Radon transform $\radonSD$, we introduce local averages in time, to ease the sampling process. To this end, we introduce another smooth, positive, real-valued function $\overline{\varrho}$ such that $\supp(\overline{\varrho})\subset(-\overline{\eta},\overline{\eta})$ and $\int_{-\overline{\eta}}^{\overline{\eta}} \overline{\varrho}(\tau)d\tau = 1$, and define $\cradonSD \colon L^2(\Omega \times (0,T)) \rightarrow Y$ as: 
\begin{equation}
    \label{eq:radonSD_time}
    \quad (\cradonSD f)(t) = \int_0^T \radonSD(f(\cdot,\tau))\overline{\varrho}(t-\tau) d\tau = \radonSD \left(\int_0^T f(\cdot,\tau)\overline{\varrho}(t-\tau) d\tau \right), \quad t \in (0,T).
\end{equation}
It is straightforward to verify that $\cradonSD$ satisfies the assumptions (O1) and (O2) by setting $\mathcal{U}=[0,2\pi)$ and $\mathcal{V} = \R^{N_\text{dtc}}$ for any $X \subset L^2(\Omega \times (0,T))$.

Note that we introduce the smooth partitions of the spatial offset $s$ and time $t$ for mainly technical reasons. However, in practice both of these quantities are always averaged as real detector elements have finite resolution and require some time to collect sufficient photon statistics to make a measurement.

\end{MotExample}

We now specify the statistical measurements model for both sampling and noise. In this section, we only consider a statistical noise scenario. We introduce two random processes (for the sake of clarity, we also show their dependence on the random event $\omega$ living in a suitable probability space). The first random process, $\bu(t,\omega) = (u_n(\omega,t))_{n=1}^N$, is defined in $(0,T)$ and takes values in $\mathcal{U}_N$; the second one, $\be(t,\omega) = (\epsilon_n(\omega,t))_{n=1}^N$, is defined in $(0,T)$ and takes values in $\mathcal{V}_N$. Here, $\mathcal{U}_N$ and $\mathcal{V}_N$ denote the product spaces of $\mathcal{U}$ and $\mathcal{V}$ with themselves $N$ times. We consider the following scalar product in $\mathcal{V}_N$:
\[ \langle {\bf{v}}, {\bf{\tilde{v}}} \rangle_{\mathcal{V}_N} = \frac{1}{N} \sum_{n=1}^N \langle v_n, \tilde{v}_n \rangle_{\mathcal{V}}. \]
Furthermore, we assume the following:
\begin{itemize}
    \item[(M1)] $\be(\omega,\cdot) \in L^2(0,T;\mathcal{V}_N)$ for a.e. $\omega$.
    \item[(M2)] For a.e.~$t \in (0,T)$ and for $n = 1,\ldots,N$, the random variables $\epsilon_n(t)$ are independent and identically distributed as $\nu$, a random variable on $\mathcal{V}$.
    \item[(M3)] $\nu$ is such that $\E[\nu] = 0$ and $\nu$ is sub-Gaussian with $\big\|\| \nu\|_{\mathcal{V}}\big\|_{\text{sG}} \leq 1$.
    \item[(M4)] $\bu(\omega,\cdot)$ is a bounded and measurable function from $(0,T)$ to $\mathcal{U}_N$, for a.e. $\omega$.
    \item[(M5)] For a.e.~$t \in (0,T)$ and for $n = 1,\ldots,N$, the random variables $u_n(t)$ are independent and identically distributed as $\mu$, a random variable on $\mathcal{V}$.
    \item[(M6)] Random variables $\nu$ and $\mu$ are independent.
\end{itemize}

Finally, we can state an analogue of the inverse problem~\eqref{eq:ProblemSetting} for the case of random subsampled measurements:
\begin{equation}\label{eq:ProblemSettingOLD}
\text{given} \; \gd_N \in L^2(0,T;\mathcal{V}_N) \; : \; 
g_N^{\delta}(t) = (\cAbu f^{\dag}) (t) + \delta \be(t) \;\; \text{a.e.}\ t \in (0,T), 
\quad \text{recover} \;  f^{\dag} \in X.
\end{equation}
In particular, the sampling operator:
\[
\cAbu \colon X \rightarrow L^2(0,T;\mathcal{V}_N), \qquad (\cAbu f)(t) = \big((\cA f)(t)(u_n(t))\big)_{n=1}^N \quad \text{a.e. } t \in (0,T)
\]
is well-defined due to our assumptions.
We can regard this operator as the composition $\cAbu = \Sbu \cA$, being $\Sbu\colon \mathcal{Z} \rightarrow L^2(0,T;\mathcal{V}_N)$ the sampling operator associated with the vector $\bu \in \mathcal{U}_N$:
\begin{equation}
    \label{eq:sampling_operator}
    (\Sbu g(t))_n = g(t)(u_n) \quad \text{a.e. } t \in (0,T).
\end{equation}
To solve~\eqref{eq:ProblemSettingOLD} we want to use the following regularization strategy:
\begin{equation}\label{eq:regularizedOLD}
    f_{\alpha,N}^{\delta} = \argmin_{f\in X} \Jdan(f) := \argmin_{f \in X} \left\{
\frac{1}{2} \ \| \cAbu f 
    - g_N^{\delta}\|^2_{L^2(0,T; \mathcal{V}_N)} + \alpha R(f)
\right\},
\end{equation}
with $R$ as in \eqref{eq:R_pnorm}. The existence and uniqueness of $\fdan$ can be proved following the same procedure as in \cref{prop:well-posed-regu}. In particular, thanks to a more refined selection principle suited for product spaces (see \cite[Lemma 6.23]{steinwart2008support}), it can be shown that $\fdan$ is a random variable in $L^2(0,T;\mathcal{V}_N)$, from which the proof follows. 

Similarly to the fully sampled case, we
are interested in deriving convergence estimates for the regularized problem~\eqref{eq:regularizedOLD}, specifically for the quantity $\E[D_R(\fdan,f^\dag)]$. In this context, we can take into account two asymptotic behaviours: when $\delta \rightarrow 0$ and when $N \rightarrow \infty$. In contrast to the (deterministic) inverse problem literature, where one only expects convergence of $\fdan$ to $f^\dag$ when the noise level $\delta$ is reduced, 
in the statistical scenario it is also possible
to recover $f^\dag$ for a non-vanishing noise level, by increasing the number of point evaluations. This is common in the statistical learning literature and is essentially due to the fact that the noise affecting each evaluation is (zero-mean, additive, and) independent: thus, increasing the number of measurements helps cancel the effect of noise. 

In analogy to the fully sampled case, we 
look for an \textit{a priori} choice rule for $\alpha$ of the form $\alpha = \alpha(\delta,N)$. Unfortunately, following closely the simple strategy adopted to prove~\cref{prop:conv_rate_full} leads to convergence rates that are provably suboptimal. 
Hence we propose a strategy which relies more closely on the convexity of the functional $R(f) = \frac{1}{p} \| f \|^p_{X}$, where $X$ is as in \eqref{eq:set_X}, and follows the one in \cite{bubba2021,Bubba22}. 
Indeed, the minimization problem \eqref{eq:regularizedOLD} is analogous to the one considered in \cite{bubba2021,Bubba22}, the main difference being the presence of the Bochner norm $\|\cdot\|_{L^2(0,T;\mathcal{V}_N)}$, which integrates the mismatch in time. As a consequence, we can take advantage of the results proved in \cite{bubba2021,Bubba22}, with slight formal modifications.
For an in-depth comparison
in a non-dynamic setting, we refer to those publications (cf. also \cref{ap:ancillaryRes} where the ancillary results building up to the main result, \cref{thm:general_rate}, are stated).

Recall that $X$ is a reflexive Banach space, continuously embedded in $L^2((0,T)\times \Omega)$ (see \cref{prop:coercivity}) and that 
the convex conjugate $R^\star$ is defined by:
\[
R^\star\colon X^* \rightarrow \R, \quad R^\star(y) = \sup_{x \in X} \left\{\langle y, x \rangle_* - R(x) \right\}.
\]
Next, we define the following quantity, which plays a crucial role to prove the convergence rates and it is strictly connected to source conditions:\begin{equation}\label{eq:Rbeautiful}
	\msR(\bar{\beta}, \bu; f^\dag) = \inf_{\bar w\in L^2(0,T;\mathcal{V}_N)} 
	\bigg\{
	R^\star\left(\sdiff^\dag - \cAbu^* \bar w\right) + \frac{\bar{\beta}}{2} \norm{\bar{w}}_{L^2(0,T;\mathcal{V}_N)}^2
	\bigg\},
\end{equation}
being $\sdiff^\dag = \nabla R(f^\dag)$. This quantity naturally arises in one of the auxiliary results (see the right-hand side of \eqref{eq:aux_breg2} in \cref{prop:aux_convex2}).

We are now ready to state the main result, establishing the convergence rates, for $N \rightarrow \infty$, of the expected value of the Bregman distance associated with the optimal choice of $\alpha$ and for $1<p<2$. 
In particular, the following result has been proved (in the non-dynamic case) in \cite[Theorem 4.10]{bubba2021}, where one has to consider $Q=q/2$. We report it here by first isolating the general estimate \eqref{eq:conv_rate_samp}, which is then specified into the two main convergence rates \eqref{eq:param_choice_p_fixed_Tapio} and \eqref{eq:param_choice_p_redu_Tapio}. To simplify the notation, we use the symbols $\lesssim$ and $\simeq$ to denote that an inequality and an equality hold up to a multiplicative constant independent of $\alpha, \delta, N$.
\begin{theorem}[{\cite[Theorem 4.10 with $Q=q/2$]{bubba2021}}]
\label{thm:general_rate}
Suppose that assumptions (O1)-(O2) are satisfied and it holds that:
\begin{equation}
\label{eq:ass_on_phomog_rate1}
\E \left[ \msR(\bar{\beta}, \bu; f^\dag) \right] \lesssim \bar{\beta} + N^{-\frac{q}{2}}
\qquad
\text{and}
\qquad
\E \left[R^\star(\cAbu^* \be)\right] \lesssim N^{-\frac q2},
\end{equation}
where $N$ is the number of randomly subsampled measurements. Then, as $N \rightarrow \infty$, we have the following estimate:
\begin{equation}
\E \left[D_R(\fdan, f^\dag) \right] \lesssim \alpha + \frac{\delta^2}{N \alpha^3} + N^{-1}
    \label{eq:conv_rate_samp}
\end{equation}
which, depending on the asymptotic regime, reads as:
\begin{itemize}
    \item If $\delta N \rightarrow \infty$ (and $\delta^2/N \rightarrow 0$), then:
    \begin{equation}
	\label{eq:param_choice_p_fixed_Tapio}
	\E \left[D_R(\fdan, f^\dag) \right] \lesssim  \left( \frac{\delta^2}{N}\right)^{\frac{1}{3}} \quad \text{for} \quad \alpha \simeq  \left( \frac{\delta^2}{N}\right)^{\frac{1}{3}}.
    \end{equation}
    \item If $\delta N$ is bounded, then: 
    \begin{equation}
	\label{eq:param_choice_p_redu_Tapio}
	\E \left[D_R(\fdan, f^\dag) \right] \lesssim  N^{-1} \quad \text{for} \quad \alpha \simeq N^{-1}. 
    \end{equation}
\end{itemize}
\end{theorem}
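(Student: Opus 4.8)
The plan is to establish \cref{thm:general_rate} by transporting the $p$-homogeneous statistical-inverse-learning analysis of \cite{bubba2021,Bubba22} to the dynamic setting, the only structural novelty being the Bochner norm $\|\cdot\|_{L^2(0,T;\mathcal{V}_N)}$ that integrates the data mismatch in time. First I would verify that this time-integration is harmless: the operator $\cAbu$, the regularizer $R$, its conjugate $R^\star$, and the hypotheses (M1)--(M6) are all posed a.e.\ in $t$, and $\fdan$ is a genuine random variable with values in $X$ (by the product-space selection argument already invoked after \eqref{eq:regularizedOLD}); hence any inequality proved pointwise in $t$ may be integrated over $(0,T)$ without loss. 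This legitimizes invoking verbatim the ancillary results gathered in \cref{ap:ancillaryRes}.

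The heart of the proof is a deterministic, convex-analytic estimate for the symmetric Bregman distance. From the optimality condition $\cAbu^*(\cAbu\fdan - g_N^\delta) + \alpha\nabla R(\fdan) = 0$ for the unique minimizer of \eqref{eq:regularizedOLD}, pairing with $\fdan - f^\dag$ and using $g_N^\delta = \cAbu f^\dag + \delta\be$ together with $\nabla R(f^\dag) = \sdiff^\dag$, one obtains the identity
\[
\alpha D_R(\fdan,f^\dag) + \|\cAbu(\fdan - f^\dag)\|^2 = \delta\langle \cAbu^*\be,\, \fdan - f^\dag\rangle_* - \alpha\langle \sdiff^\dag,\, \fdan - f^\dag\rangle_*.
\]
The deterministic source term is handled by inserting an auxiliary dual variable $\bar w \in L^2(0,T;\mathcal{V}_N)$, splitting $\sdiff^\dag = (\sdiff^\dag - \cAbu^*\bar w) + \cAbu^*\bar w$, bounding the first piece by Fenchel--Young and absorbing the cross term $\langle \bar w, \cAbu(\fdan - f^\dag)\rangle$ into the residual $\|\cAbu(\fdan - f^\dag)\|^2$; the infimum over $\bar w$ then reproduces precisely the quantity $\msR(\bar\beta,\bu;f^\dag)$ on the right-hand side of \eqref{eq:aux_breg2}, with the parameter $\bar\beta$ coupled to $\alpha$ by the very Young balancing used in the absorption. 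This is the content of \cref{prop:aux_convex2}, which I would invoke directly.

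The hard part will be the noise cross-term $\delta\langle \cAbu^*\be,\, \fdan - f^\dag\rangle_*$: as the discussion preceding \cref{thm:general_rate} stresses, a crude Young's inequality here gives provably suboptimal rates. Instead, following \cite{bubba2021,Bubba22}, I would exploit the uniform convexity of the $p$-homogeneous $R$ (which for $1<p<2$ furnishes a lower bound for $D_R(\fdan,f^\dag)$ in terms of a power of $\|\fdan - f^\dag\|_X$) to bound this term by the conjugate $R^\star(\cAbu^*\be)$ plus a fraction of $\alpha D_R(\fdan,f^\dag)$ that is reabsorbed on the left; carefully tracking the $\alpha$-dependent constants is exactly what produces the sharp noise weight in \eqref{eq:conv_rate_samp}. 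I would then take expectations and insert the two hypotheses \eqref{eq:ass_on_phomog_rate1}, choosing the free parameter $\bar\beta \simeq \alpha$ so that $\E[\msR(\bar\beta,\bu;f^\dag)] \lesssim \bar\beta + N^{-q/2}$ contributes the bias of order $\alpha$ (the $N^{-q/2}$ piece being faster, since $q/2>1$ for $1<p<2$), while $\E[R^\star(\cAbu^*\be)] \lesssim N^{-q/2}$ supplies the $\delta$-dependent noise term and the intrinsic sampling error — present even in the noiseless limit and ultimately governed by concentration of the random sampling operator — contributes the floor $N^{-1}$. Collecting the surviving terms gives \eqref{eq:conv_rate_samp}.

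Finally I would carry out the a priori parameter selection by equilibrating the bias, increasing in $\alpha$, against the noise term, decreasing in $\alpha$. When $\delta N \to \infty$ (and $\delta^2/N \to 0$) these two dominate the floor and balance at the scale in \eqref{eq:param_choice_p_fixed_Tapio}; when $\delta N$ is bounded the noise term is already controlled by the floor for $\alpha \simeq N^{-1}$, giving \eqref{eq:param_choice_p_redu_Tapio}. Apart from the convexity-based control of the noise term, the only genuinely new verification relative to \cite{bubba2021} is the measurability and integrability in time that license passing from the a.e.-in-$t$ estimates to their time-integrated counterparts; every remaining step transcribes the static argument.
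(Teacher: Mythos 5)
Your overall route is the paper's: \cref{thm:general_rate} is imported from \cite[Theorem 4.10]{bubba2021}, and the supporting chain sketched in \cref{ap:ancillaryRes} --- optimality condition, Fenchel--Young splitting with an auxiliary dual variable producing $\msR$ (\cref{prop:aux_convex2}), convexity-based absorption of the remainders (\cref{lem:xuroach}), an a priori bound (\cref{prop:apriori}), the deterministic estimate of \cref{thm:bregman_dist_gen}, and finally expectation plus an a priori choice of $\alpha$ --- is exactly what you describe, including the observation that the Bochner norm in time changes nothing structurally.

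There is, however, one concrete point where the sketch, taken literally, would fail: the final balancing. After the Fenchel--Young steps the bias term appears as $\widetilde{\gamma}_1^{-q}\,\msR(\alpha\widetilde{\gamma}_1^{q},\bu;f^\dag)$, accompanied by a remainder $R(\widetilde{\gamma}_1(f^\dag-\fdan))$ that Xu--Roach converts into a fraction of $D_R$ \emph{plus} a term of order $\widetilde{\gamma}_1^{2p/(2-p)}\max\{R(f^\dag),R(\fdan)\}$; the latter requires the a priori bound \cref{prop:apriori}, which you never invoke. Your choice $\bar{\beta}\simeq\alpha$ corresponds to $\widetilde{\gamma}_1\simeq 1$, for which this remainder is $O(1)$ and the resulting estimate is vacuous. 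The correct bookkeeping takes $\widetilde{\gamma}_1\simeq N^{1/q-1/2}\to 0$, so that $\widetilde{\gamma}_1^{-q}N^{-q/2}\simeq N^{-1}$ and $\widetilde{\gamma}_1^{2p/(2-p)}\simeq N^{-1}$ simultaneously (hence $\bar{\beta}\simeq \alpha N^{1-q/2}$, not $\alpha$); it is this coupling of the Young parameters to $N$ --- not the concentration rate $N^{-q/2}$ alone, which is indeed faster than $N^{-1}$ --- that produces the $N^{-1}$ floor in \eqref{eq:conv_rate_samp}. An analogous $N$- and $\alpha$-dependent choice of $\widetilde{\gamma}_2$ yields the $\delta^2/(N\alpha^3)$ noise weight. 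Once these choices are made explicit, the remainder of your argument, including the two asymptotic regimes, goes through as stated.
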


\begin{remark}[non-negativity constraint]
As in the case of full measurements, it is advantageous to incorporate in the variational problem  \eqref{eq:regularizedOLD} the information that the desired solution $f^\dag$ is non-negative. This leads to the following minimization problem:
\begin{equation}\label{eq:regularizedOLD_NN}
\argmin_{f \in X} \left\{
\frac{1}{2} \ \| \cAbu f 
    - g_N^{\delta}\|^2_{L^2(0,T; \mathcal{V}_N)} + \alpha R(f)+ \iota_+(f)
\right\},
\end{equation}
where $\iota_+$ is the indicator function of the non-negative orthant.
The statement of \cref{thm:general_rate} still holds in the case of the minimization problem~\eqref{eq:regularizedOLD_NN}: the proof is analogous to the one outlined in Section 3 of \cite{Bubba22}.
\end{remark}

In order to verify the conditions \eqref{eq:ass_on_phomog_rate1} of \cref{thm:general_rate}, we need to introduce the following assumptions on the exact solution $f^\dag$ and on the operator $\cA$. First, we have to define an operator $\cAm^*$ associated with the probability distribution governing the sample procedure in $\mathcal{U}$. Let $Y_\mu$ be the Hilbert space $Y = L^2(0,T,\tilde{Y}_\mu)$, where $\tilde{Y}_\mu$ is the space $\tilde{Y} = L^2(\mathcal{U};\mathcal{V})$, having replaced its inner product with:
\[
\langle f, g \rangle_\mu = \int_{\mathcal{U}} \langle f(u),g(u)\rangle_{\mathcal{V}}\ d\mu(u).
\]
Then, $\cAm^*$ is the adjoint of $\cA$ in $Y_\mu$. Notice in particular that, if $\mathcal{U}$ is bounded and $\mu$ is a uniform distribution on $\mathcal{U}$, 
 then $\mu$ is a continuous distribution associated with a (constant) density equal to $1/|\mathcal{U}|$. Therefore, the inner products of $Y$ and $Y_\mu$ only differ by a multiplicative constant $1/|\mathcal{U}|$ and $\cAm = 1/|\mathcal{U}| \cA$. 
Next, we introduce the following assumptions:

\begin{itemize}
    \item[(S1)] The ground truth satisfies the 
    source condition: 
    \begin{align*}
    \exists \ w^\dag \in \mathcal{Z} = L^2(0,T;\mathcal{C}(\mathcal{U},\mathcal{V})) \qquad \text{s.t. } \quad  
        \nabla R(f^\dag) = \cAm^* w^\dag.
    \end{align*}
    Notice that this condition is essentially equivalent to the one introduced for the full-measurement case, \eqref{eq:SC}. In particular, there is no need to verify a source condition for every sampled operator $\cAbu$, and the random sampling procedure only affects \eqref{eq:SC} by replacing the original operator $\cA^*$ with its representation $\cAm^*$ in $Y_\mu$.
    \item[(S2)] Let $( \psi_\lambda)_{\lambda \in \Lambda}$ be the cylindrical-shearlet (Parseval) frame introduced in \eqref{eq:cShearSystem} and, for each $\lambda \in \Lambda$, let $X \ni \overline{\psi}_\lambda = \psi_\lambda|_{\Omega \times (0,T)}$. Then the operator $\cA$ satisfies:
    \begin{align*}
        \| (\| \cA \overline{\psi}_\lambda \|_\mathcal{Z} )_\lambda\|_{\ell^q(\hat{w})} = \sum_{\lambda \in \Lambda} \tilde{w}_\lambda^{-q/p} \|\cA \overline{\psi}_\lambda \|_\mathcal{Z}^q
        = C_q < \infty,
    \end{align*}
    where we denoted by $\hat{w}$ the weight sequence such that $\hat{w}_\lambda = \tilde{w}_\lambda^{-q/p}$.
\end{itemize}
These assumptions allow to verify the hypotheses of  \Cref{thm:general_rate}, as shown by the next result.

\begin{proposition}
\label{prop:besov_rate}
The assumptions (S1) and (S2) imply that \eqref{eq:ass_on_phomog_rate1} is satisfied.
\end{proposition}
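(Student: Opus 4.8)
The plan is to reduce both bounds in \eqref{eq:ass_on_phomog_rate1} to a single moment estimate for the frame coefficients of a \emph{centered empirical average}, and then to sum over $\lambda$ using the summability granted by (S2). The first step is an explicit upper bound for the convex conjugate. Writing $R(f)=\frac1p\sum_{\lambda}\tilde w_\lambda|\langle f,\overline{\psi}_\lambda\rangle_*|^p$ and enlarging the supremum in the definition of $R^\star$ from $\sh(X)$ to the whole coefficient space $\ell^p(\tilde w)$, a termwise computation of the conjugate of $t\mapsto\frac1p\tilde w_\lambda|t|^p$ yields, for every $y\in X^*$,
$$R^\star(y)\;\le\;\tfrac1q\,\big\|\big(\langle y,\overline{\psi}_\lambda\rangle_*\big)_{\lambda}\big\|_{\ell^q(\hat w)}^q=\tfrac1q\sum_{\lambda\in\Lambda}\hat w_\lambda\,\big|\langle y,\overline{\psi}_\lambda\rangle_*\big|^q,$$
where $q$ is the H\"older conjugate of $p$ and the emerging weight is precisely $\hat w_\lambda=\tilde w_\lambda^{-q/p}$ from (S2).

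For the second bound I would apply this to $y=\cAbu^*\be$. Using the adjoint identity $\langle\cAbu^*\be,\overline{\psi}_\lambda\rangle_*=\langle\be,\cAbu\overline{\psi}_\lambda\rangle_{L^2(0,T;\mathcal V_N)}=\frac1N\sum_{n=1}^N Z_{\lambda,n}$ with $Z_{\lambda,n}=\int_0^T\langle\epsilon_n(t),(\cA\overline{\psi}_\lambda)(t)(u_n(t))\rangle_{\mathcal V}\,dt$, I note that for fixed $\lambda$ the $Z_{\lambda,n}$ are independent and, by (M2)--(M3) and (M6), zero-mean and sub-Gaussian, with second moment controlled by $\|\cA\overline{\psi}_\lambda\|_{\mathcal Z}^2$ through (O1)--(O2) and the pointwise bound $\|(\cA\overline{\psi}_\lambda)(t)(u)\|_{\mathcal V}\le\|(\cA\overline{\psi}_\lambda)(t)\|_{\mathcal C(\mathcal U,\mathcal V)}$. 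A moment inequality for averages of i.i.d.\ zero-mean sub-Gaussian variables then gives $\E\big[\big|\tfrac1N\sum_n Z_{\lambda,n}\big|^q\big]\lesssim N^{-q/2}\|\cA\overline{\psi}_\lambda\|_{\mathcal Z}^q$. Taking expectation in the conjugate bound and exchanging $\E$ with the sum over $\lambda$ (Tonelli, plus $C_q<\infty$) produces $\E[R^\star(\cAbu^*\be)]\lesssim N^{-q/2}\sum_\lambda\hat w_\lambda\|\cA\overline{\psi}_\lambda\|_{\mathcal Z}^q=C_q\,N^{-q/2}$.

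For the first bound I would estimate the infimum in \eqref{eq:Rbeautiful} by the single competitor $\bar w=\Sbu w^\dag$, the sampling of the source element $w^\dag\in\mathcal Z$ from (S1). The penalty term is controlled deterministically, $\|\Sbu w^\dag\|_{L^2(0,T;\mathcal V_N)}^2\le\int_0^T\|w^\dag(t)\|_{\mathcal C(\mathcal U,\mathcal V)}^2\,dt=\|w^\dag\|_{\mathcal Z}^2$, giving $\frac{\bar\beta}2\|\Sbu w^\dag\|^2\lesssim\bar\beta$. For the conjugate term, (S1) gives $\sdiff^\dag=\cAm^*w^\dag$, and since the $u_n$ are i.i.d.\ $\sim\mu$ by (M5) one checks $\E\big[\langle\cAbu^*\Sbu w^\dag,\overline{\psi}_\lambda\rangle_*\big]=\langle\cAm^*w^\dag,\overline{\psi}_\lambda\rangle_*=\langle\sdiff^\dag,\overline{\psi}_\lambda\rangle_*$. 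Hence $\langle\sdiff^\dag-\cAbu^*\Sbu w^\dag,\overline{\psi}_\lambda\rangle_*$ is exactly the centered difference between the population $\mu$-average and the empirical average of $u\mapsto\langle w^\dag(t)(u),(\cA\overline{\psi}_\lambda)(t)(u)\rangle_{\mathcal V}$, so the same moment inequality gives a $q$-th moment $\lesssim N^{-q/2}\|w^\dag\|_{\mathcal Z}^q\|\cA\overline{\psi}_\lambda\|_{\mathcal Z}^q$. Summing over $\lambda$ with (S2) yields $\E[R^\star(\sdiff^\dag-\cAbu^*\Sbu w^\dag)]\lesssim C_q N^{-q/2}$, and adding the two contributions gives $\E[\msR(\bar\beta,\bu;f^\dag)]\lesssim\bar\beta+N^{-q/2}$.

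The hard part will be the centered-average moment estimate with the correct $N^{-1/2}$ scaling: one must bound, uniformly in $\lambda$, the $q$-th moment of an average of $N$ independent, zero-mean, sub-Gaussian contributions, with the constant depending on $\lambda$ only through $\|\cA\overline{\psi}_\lambda\|_{\mathcal Z}$. Since $q=p/(p-1)>2$ for $1<p<2$, this rests on the sub-Gaussian hypotheses (M2)--(M3), (M5)--(M6) together with a Rosenthal/Marcinkiewicz--Zygmund-type inequality, exactly as in the non-dynamic argument of \cite{bubba2021}; the genuinely new feature is only the Bochner integration in time, which is absorbed into the second-moment constant via $\|\cdot\|_{\mathcal Z}$ and leaves the scaling untouched. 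A secondary point is justifying the interchange of expectation and the infinite sum over $\lambda$, which follows from Tonelli's theorem and the finiteness $C_q<\infty$ in (S2).
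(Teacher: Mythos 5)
Your proposal is correct and follows essentially the same route as the paper's proof: the termwise conjugate bound \eqref{eq:R_star}, the competitor $\bar w=\Sbu w^\dag$ in \eqref{eq:Rbeautiful}, the decomposition of each frame coefficient of $\sdiff^\dag-\cAbu^*\Sbu w^\dag$ into a centered average of i.i.d.\ bounded terms, a concentration/moment estimate yielding $N^{-q/2}$ per coefficient, and summation via (S2). The only differences are cosmetic (you bound the penalty term by $\|w^\dag\|_{\mathcal{Z}}^2$ deterministically where the paper computes its expectation exactly, and you invoke a sub-Gaussian moment inequality where the paper uses Hoeffding plus tail integration for the bounded $\zeta_n^\lambda$), and you additionally sketch the second bound in \eqref{eq:ass_on_phomog_rate1}, which the paper defers to \cite{Bubba22} along the same lines.
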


\begin{proof}
The proof of  \cref{prop:besov_rate} is analogous to the ones of Proposition 4.2 and 4.3 in \cite{Bubba22}, with significant but technical modifications due to the time-dependent framework. To balance self-containment and clarity, we only prove that the first bound in~\eqref{eq:ass_on_phomog_rate1} holds.

    In the definition \eqref{eq:Rbeautiful} of $\msR$, we consider $\bar{w} = \Sbu w^\dag$, being $w^\dag$ the source condition element in (S1) and $\Sbu$ the sampling operator defined in \eqref{eq:sampling_operator}. Thus, we obtain:
\[
\begin{aligned}
    	\msR(\bar{\beta}, \bu; f^\dagger) &\leq
	R^\star(f^\dag - \cAbu^* \Sbu w^\dag) + \frac{\bar{\beta}}{2} \| \Sbu w^\dag \|^2_{L^2(0,T;\mathcal{V}_N)} \\
    & \leq 
    \| \cAm^*w^\dag - \cAbu^* \Sbu w^\dag \|_{X^*}
    + \frac{\bar{\beta}}{2} \| \Sbu w^\dag \|^2_{L^2(0,T;\mathcal{V}_N)},
\end{aligned}
\]
where we used (S1) and \eqref{eq:R_star}. 
The expectation of the second term on the right-hand side can be computed as follows:
\[
\begin{aligned}
    \E_{\bu \sim \mu^N} &\left[\frac{\bar{\beta}}{2} \| \Sbu w^\dag \|^2_{L^2(0,T;\mathcal{V}_N)}\right] = \frac{\bar{\beta}}{2} \int_{\mathcal{U}} \| \Sbu w^\dag(t,\cdot)\|^2_{L^2(0,T;\mathcal{V}_N)} d\mu^N(\bu) \\
    &=  \frac{\bar{\beta}}{2N} \int_{\mathcal{U}} \int_0^T \sum_{n=1}^N \| w^\dag(t,u_n)\|_{\mathcal{V}}^2 dt\ d\mu(u_n)
    =  \frac{\bar{\beta}}{2N} \sum_{n=1}^N  \int_0^T  \| w^\dag(t,\cdot) \|^2_{Y_\mu} = \frac{\bar{\beta}}{2} \| w^\dag\|_{L^2(0,T;Y_\mu)}^2,
\end{aligned}
\]
and gives rise to the first term in the right-hand side of the first bound in \eqref{eq:ass_on_phomog_rate1}. Instead, for the first term on the right-hand side, recalling \eqref{eq:R_star}, we have:
\begin{equation*}
    \begin{aligned}
    \| \cAm^*w^\dag & - \cAbu^* \Sbu w^\dag \|_{X^*} =
    \sum_{\lambda \in \Lambda} \tilde{w}_j^{-q/p} \left| \langle ( \cAm^*w^\dag - \cAbu^* \Sbu w^\dag ) , \overline{\psi}_\lambda \rangle_{X^* \times X} \right|^q \\
    &= \sum_{\lambda \in \Lambda} \tilde{w}_j^{-q/p} \left| \langle w^\dag, \cAm \overline{\psi}_\lambda \rangle_{Y_\mu} - \langle \Sbu w^\dag , \cAbu \overline{\psi}_\lambda \rangle_{L^2(0,T;\mathcal{V}_N)} \right|^q 
    = \sum_{\lambda \in \Lambda} \tilde{w}_j^{-q/p} \left| \frac{1}{N} \sum_{n = 1}^N \zeta_n^\lambda \right|^q 
    \end{aligned}
\end{equation*}
where we have set:
$\zeta^{\lambda}_n =  \langle w^\dag, \cAm \overline{\psi}_{\lambda} \rangle_{Y_\mu} - \langle S_{u_n} w^\dag, S_{u_n} \cA \overline{\psi}_\lambda) \rangle_{L^2(0,T;\mathcal{V})}$.

It is easy to show that the real-valued random variables $\zeta^{\lambda}_n$ are zero-mean and also independent and identically distributed. Moreover, they are bounded random variables. Indeed, in view  of (O2) and of the Cauchy--Schwarz inequality, for any $u_n \in \mathcal{U}$
we have:
\begin{equation*}
\begin{aligned}
    \langle S_{u_n} w^\dag, S_{u_n} \cA \overline{\psi}_\lambda) \rangle_{L^2(0,T;\mathcal{V})} &= \int_0^T \langle w^\dag(t,u_n(t)) ,(\cA \overline{\psi}_\lambda)(t)(u_n(t)) \rangle_{\mathcal{V}} dt \\
    & \leq \int_0^T \| w^\dag(t,\cdot) \|_{\mathcal{C}(\mathcal{U},\mathcal{V})} \| \cA \overline{\psi}_\lambda (t)\|_{\mathcal{C}(\mathcal{U},\mathcal{V})} dt 
    \leq \| w^\dag \|_\mathcal{Z} \| \cA \overline{\psi}_\lambda \|_\mathcal{Z}.
\end{aligned}
\end{equation*}
Therefore, each $\zeta_n^\lambda$ takes values in the interval of size $2 \| w^\dag \|_\mathcal{Z} \| \cA \psi_\lambda \|_\mathcal{Z}$ and we can apply the Hoeffding's inequality for bounded random variables to get:
\[
\Prob\left( \left| \sum_{n=1}^N \zeta_n^\lambda \right| > t \right) \leq 2 \exp \left( -\frac{t^2}{2 \|w^\dag\|_\mathcal{Z}^2 \| \cA \overline{\psi}_\lambda\|_\mathcal{Z}^2}\right).
\]
In conclusion:
\[
\begin{aligned}
\E\left[\| \cAm^*w^\dag - \cAbu^* \Sbu w^\dag \|_{X^*}\right]
&= \frac{1}{q} \sum_{\lambda \in \Lambda} \tilde{w}_j^{-q/p} N^{-q} \E \left[ \left| \sum_{n=1}^N \zeta_n^\lambda \right|^q \right]  \\
& \leq \frac{1}{q} \sum_{\lambda \in \Lambda} \tilde{w}_j^{-q/p} N^{-q} \int_0^\infty t^{q-1} \Prob \left( \left| \sum_{n=1}^N \zeta_n^\lambda \right| > t \right)dt \\
& \leq \frac{2}{q} \sum_{\lambda \in \Lambda} \tilde{w}_j^{-q/p} N^{-q} \int_0^\infty t^{q-1} \exp \left( -\frac{t^2}{2 \|w^\dag\|_\mathcal{Z}^2 \| \cA \overline{\psi}}_\lambda\|_\mathcal{Z}^2\right) dt \\
& \leq \frac{2}{q} \sum_{\lambda \in \Lambda} \tilde{w}_j^{-q/p} N^{-\frac{q}{2}} \|w^\dag\|_\mathcal{Z}^q \|\cA \overline{\psi}_\lambda\|_\mathcal{Z}^q \int_0^\infty s^{q-1}\exp\left( -\frac{1}{2}s^2\right) ds,
\end{aligned}
\]
which can be bounded 
by a constant, depending on $q$ and on $C_q$ in (S2), multiplying $N^{-\frac{q}{2}}$.
\end{proof}

Thanks to \cref{prop:besov_rate}, we know that \cref{thm:general_rate} holds true if (S1) and (S2) are satisfied. The source condition (S1) depends on the unknown solution $f^\dag$ and it is in general difficult to check. Instead, the property (S2) only depends on the operator $\cA$ and on the frame $\{\psi_\lambda\}_\lambda$. For example, this is verified in the case of our motivating example.

\begin{MotExample}[$\cradonSD$ and cylindrical shearlets satisfy (S2)] 
The following lemma shows that, for a cylindrical shearlet frame, condition (S2) is satisfied if, e.g., the forward operator $\cA$ is the semi-discretized Radon transform~\eqref{eq:radonSD_time}.

\begin{lemma}
\label{lem:Apsi}
The cylindrical shearlet frame truncated to $\Omega \times (0,T)$, $\{\overline{\psi}_\lambda \}_\lambda$, and the operator $\cradonSD$ defined by~\eqref{eq:radonSD_time} satisfy
    \begin{equation}
    \sum_{\lambda \in \Lambda} \hat{w}_\lambda \| \cradonSD \overline{\psi}_\lambda \|_\mathcal{Z}^q < \infty.
        \label{eq:Aframe_inf}
    \end{equation}
\end{lemma}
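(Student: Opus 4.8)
The plan is to reduce the series in \eqref{eq:Aframe_inf} to a convergent geometric series in the scale $j$, by establishing a single scale-dependent bound on $\| \cradonSD \overline{\psi}_\lambda \|_\mathcal{Z}$ that is \emph{uniform} in the shear $k$, the translation $\vec{m}$, and the cone index $\iota$, and then counting how many indices $\lambda$ occur at each scale. Concretely, I expect to prove that there is a constant $C$ with $\| \cradonSD \overline{\psi}_\lambda \|_\mathcal{Z} \leq C\, 2^{-\frac{5}{2} j}$, where $j=|\lambda|$; once this holds, the weighted sum factorizes across scales and can be summed explicitly against the weight $\hat{w}_\lambda = \tilde{w}_\lambda^{-q/p}$.

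For the norm estimate I would exploit that $\cradonSD$ acts by a smooth temporal average (convolution with $\overline{\varrho}$) followed by the semi-discrete spatial transform $\radonSD$. Since each profile $\varrho_i(\cdot,\theta)$ is uniformly bounded, the crude pointwise bound $\sup_{\theta,i}|[\radonSD F(\theta)]_i| \leq \|\varrho\|_\infty \|F\|_{L^1(\R^2)}$ controls the $\mathcal{C}(\mathcal{U},\mathcal{V})$-norm at each time by the spatial $L^1$-norm of the time-averaged slice. Writing $h_\lambda(\tau) = \|\psi_\lambda(\cdot,\cdot,\tau)\|_{L^1(\R^2)}$, Minkowski's integral inequality gives the pointwise-in-time bound $\|(\cradonSD \overline{\psi}_\lambda)(\cdot)\|_\mathcal{C} \lesssim (h_\lambda * |\overline{\varrho}|)(\cdot)$, and Young's inequality then yields $\|\cradonSD \overline{\psi}_\lambda\|_\mathcal{Z} \lesssim \|h_\lambda\|_{L^1(\R)}\,\|\overline{\varrho}\|_{L^2(\R)} = \|\psi_\lambda\|_{L^1(\R^3)}\,\|\overline{\varrho}\|_{L^2}$. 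The band-limited construction of \cref{thm:cylShParsevalFrame} pins down the anisotropic essential support of $\psi_\lambda$ (side-lengths $\simeq 2^{-2j}\times 2^{-j}\times 2^{-2j}$) together with the Parseval normalization $\|\psi_\lambda\|_{L^2}\simeq 1$; a Cauchy--Schwarz estimate over this support gives $\|\psi_\lambda\|_{L^1(\R^3)}\lesssim 2^{-5j/2}$, uniformly in $k,\vec{m},\iota$, which is exactly the claimed rate.

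The second ingredient is the cardinality count. At scale $j$ there are two cones and $O(2^j)$ admissible shears; for each, the translates form a lattice of covolume $|\msT|^{-1} = |\det A_{(\iota)}|^{-j} = 2^{-5j}$, so only $O(2^{5j})$ of them have essential support meeting the fixed compact set $\overline{\Omega}\times[0,T]$ (the remaining far-away translates contribute a convergent tail, since band-limited shearlets are Schwartz). Hence at most $O(2^{6j})$ indices occur at scale $j$. Substituting $\tilde{w}_\lambda = 2^{j(\beta + \frac{5}{2}(\frac{p}{2}-1))}$ reduces \eqref{eq:Aframe_inf} to
\[
\sum_{\lambda \in \Lambda} \hat{w}_\lambda\, \|\cradonSD \overline{\psi}_\lambda\|_\mathcal{Z}^q
\;\lesssim\; \sum_{j} 2^{6j}\, 2^{-\frac{q}{p}\,j\left(\beta + \frac{5p}{4} - \frac{5}{2}\right)}\, 2^{-\frac{5q}{2}\,j},
\]
a geometric series that converges as soon as $\tfrac{q}{p}\big(\beta + \tfrac{5p}{4} - \tfrac{5}{2}\big) + \tfrac{5q}{2} > 6$. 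With the conjugate exponent $q = p/(p-1)$ this balance holds at the boundary weight $\beta = \tfrac{5}{4}(2-p)$ whenever $p < \tfrac{12}{7}$, and for every $p\in(1,2)$ once $\beta$ is taken above this threshold.

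I expect the main obstacle to be the uniform norm bound, specifically making the ``essential support $+$ Cauchy--Schwarz'' heuristic rigorous for the genuinely band-limited (hence only rapidly decaying, not compactly supported) shearlets: one must control the Schwartz tails so that both the per-element $L^1$-estimate and the truncation to $\overline{\Omega}\times[0,T]$ are uniform in $\vec{m}$. Tracking the anisotropic $2^{-2j}\times 2^{-j}\times 2^{-2j}$ scaling correctly through the temporal convolution --- via the sharp split $\|h_\lambda\|_{L^1}\|\overline{\varrho}\|_{L^2}$ rather than the lossier $\|h_\lambda\|_{L^2}\|\overline{\varrho}\|_{L^1}$ --- is what produces the decisive $2^{-5j/2}$ rate; a weaker rate would fail to beat the $2^{6j}$ cardinality growth.
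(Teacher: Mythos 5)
Your route is genuinely different from the paper's, and as written it does not prove the lemma in the stated generality. The convergence condition you derive, $\tfrac{q}{p}\bigl(\beta+\tfrac{5p}{4}-\tfrac{5}{2}\bigr)+\tfrac{5q}{2}>6$, fails precisely at the boundary weight $\beta=\tfrac{5}{4}(2-p)$ for $p\in[\tfrac{12}{7},2)$ — and that boundary weight is the unweighted case $\tilde{w}_\lambda\equiv 1$ singled out in \cref{rmrk:noWeights} as the choice of practical interest. The lemma is asserted for all $1<p<2$ and all $\beta\geq\tfrac{5}{4}(2-p)$, so your argument leaves a genuine gap there: the per-element rate $\|\cradonSD\overline{\psi}_\lambda\|_{\mathcal{Z}}\lesssim 2^{-5j/2}$ (which is what an $L^1\!\to L^\infty$ bound on $\radonSD$ plus the anisotropic support volume can give) is simply not strong enough to beat the $2^{6j}$ cardinality growth for all conjugate exponents $q>2$. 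Recovering the missing range along your lines would require a sharper per-element estimate exploiting the smoothness of $\varrho$ and $\overline{\varrho}$, but that is obstructed by the sharp truncation to $\Omega\times(0,T)$, which destroys the band-limitedness you would need. The remaining ingredients of your argument (the $L^1$ bound on $\radonSD$, the Young/Minkowski split, the $O(2^{5j})$ count of relevant translates with Schwartz-tail control for the rest) are sound, if laborious.

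The paper avoids all per-element analysis and all counting. It first observes that, since $q>2$ and $\inf_\lambda\tilde{w}_\lambda>0$ makes $\hat{w}_\lambda=\tilde{w}_\lambda^{-q/p}$ bounded above, it suffices to prove the \emph{unweighted} $\ell^2$ summability $\sum_\lambda\|\cradonSD\overline{\psi}_\lambda\|_{\mathcal{Z}}^2<\infty$ (square-summability forces the terms below $1$ eventually, whence the $q$-th powers are dominated by the squares). Then, after the one-dimensional Sobolev embedding $H^1((0,2\pi))\hookrightarrow \mathcal{C}((0,2\pi))$ in the angular variable, it exchanges the sum over $\lambda$ with the integrals over $t,\theta$ and the sum over $i$, and collapses $\sum_{\lambda}|\langle\psi_\lambda,\varrho_i(\cdot,\theta)\overline{\varrho}(t-\cdot)\rangle|^2$ (and the analogous sum with $\partial_\theta\varrho_i$) into $\|\varrho_i(\cdot,\theta)\overline{\varrho}(t-\cdot)\|_{L^2}^2$ via the Parseval frame identity of \cref{thm:cylShParsevalFrame}. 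This is essentially a Hilbert--Schmidt argument: the whole sum over $\lambda$ is traded for the $L^2$ norm of a fixed, $\lambda$-independent kernel, which is why it covers the full range of $p$ and $\beta$ with no support geometry, no tail estimates, and no scale-by-scale bookkeeping. If you want to salvage your write-up, the cleanest fix is to adopt this reduction to $\ell^2$ plus Parseval rather than trying to sharpen the $2^{-5j/2}$ rate.
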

\begin{proof}
First of all, we remark that, since $q\geq 2$ and $\hat{w}_\lambda = \tilde{w}_{\lambda}^{-q/p}$, being $\tilde{w}_\lambda$ uniformly bounded from below, it holds:
\[
\sum_{\lambda \in \Lambda}  \| \cradonSD \overline{\psi}_\lambda \|_\mathcal{Z}^2 < \infty \quad \Rightarrow \quad \sum_{\lambda \in \Lambda} \hat{w}_\lambda \| \cradonSD \overline{\psi}_\lambda \|_\mathcal{Z}^q < \infty.
\]
We thus prove the convergence of the series on the left-hand side. Since $\mathcal{U} = [0,2\pi)$ is one-dimensional, we can take advantage of the Sobolev embedding:

\[
\| \cradonSD \overline{\psi_\lambda}\|^2_\mathcal{Z} = \int_0^T \| (\cradonSD \overline{\psi_\lambda})(t)\|_{C((0,2\pi);\R^{\Ndtc})}^2 dt \lesssim \int_0^T \| (\cradonSD \overline{\psi_\lambda})(t)\|_{H^1((0,2\pi);\R^{\Ndtc})}^2 dt.
\]
We can therefore compute:
\[
\begin{aligned}
   \sum_{\lambda \in \Lambda} \| \cradonSD \overline{\psi_\lambda}\|^2_\mathcal{Z} \lesssim \sum_{\lambda \in \Lambda} \int_0^T \sum_{i=1}^{\Ndtc} \left(\| [(\cradonSD \overline{\psi_\lambda})(t)]_i\|_{L^2(0,2\pi)}^2 + \|\partial_\theta [(\cradonSD \overline{\psi_\lambda})(t)]_i\|_{L^2(0,2\pi)}^2 \right)dt
\end{aligned}
\]
and, more specifically, using the definition of $\cradonSD$ in \eqref{eq:radonSD_time}:
\[
\begin{aligned}
   \sum_{\lambda \in \Lambda} \int_0^T \sum_{i=1}^{\Ndtc} \|[(\cradonSD \overline{\psi_\lambda})(t)]_i\|_{L^2(0,2\pi)}^2 &= \! 
   \sum_{\lambda \in \Lambda} \int_0^T \sum_{i=1}^{\Ndtc}\!\int_0^{2\pi} \left| \int_{0}^T\!\!\!\int_\Omega \psi_\lambda(x,s) \varrho_i(x,\theta) \overline{\varrho}(t-s) dx ds\right|^2\!\!d\theta dt \\
   & = \int_0^T \sum_{i=1}^{\Ndtc} \int_0^{2\pi} \sum_{\lambda \in \Lambda} |\langle \psi_\lambda, \varrho_i(\cdot,\theta)\overline{\varrho}(t-\cdot)\rangle|^2 d\theta dt \\
   & \leq \int_0^T \sum_{i=1}^{\Ndtc} \int_0^{2\pi} \| \varrho_i(\cdot,\theta)\overline{\varrho}(t-\cdot) \|_{L^2(\Omega \times (0,T))}^2 d\theta dt <\infty,
\end{aligned}
\]
and analogously:
\[
\begin{aligned}
   \sum_{\lambda \in \Lambda} \int_0^T \sum_{i=1}^{\Ndtc} \|\partial_\theta[(\cradonSD \overline{\psi_\lambda})(t)]_i\|_{L^2(0,2\pi)}^2 &
   \leq \int_0^T \sum_{i=1}^{\Ndtc} \int_0^{2\pi} \| \partial_\theta\varrho_i(\cdot,\theta)\overline{\varrho}(t-\cdot) \|_{L^2(\Omega \times (0,T))}^2 d\theta dt <\infty.
\end{aligned}
\]

\end{proof}
\end{MotExample}

We now have all the ingredients to numerically verify the expected convergence rates proven in \cref{thm:general_rate} for the two asymptotic behaviours of the noise.

\section{Numerical tests} \label{sec:numericalTests}

To show that the theoretical convergence rates of \cref{ssec:nonsampled} and \cref{ssec:sampled} can be attained in practice, in this section we conduct some numerical experiments (similar to those in~\cite{Bubba22}) which aim at validating the rates. In addition, we provide some of the resulting reconstructions to highlight that while our perspective is relatively theoretical, it also leads to a practical and robust  reconstruction algorithm. 
Since a number of repeated runs of the algorithm is needed to validate the results in expected value (see \cref{ssec:results}), the experiments were carried out on a computational cluster%
\footnote{\url{https://wiki.helsinki.fi/xwiki/bin/view/it4sci/}} %
 using 4 Intel 3.2 GHz CPU cores with 1 GB of RAM each, running MATLAB 2022a on RHEL version 8.

\subsection{Discretization of the mathematical model}
\label{ssec:MathsMod}
The problem setting for the numerical experiments is the one of the motivating example, i.e., a sparse angle dynamic tomography problem with time steps $\{t_1, \ldots, t_\kappa \} \subset [0,T]$, very similar to the one in~\cite{Bubba20, bubba2023efficient}. To set the notation, we briefly explain how to derive a fully discrete counterpart of~\eqref{eq:regularizedOLD} in the case $\cA = \cradonSD$. We discretize objects of $X$ by means of vectors in $\R^{\kappa \Npxl}$, where $\Npxl$ denotes the total number of pixels involved in the discretization of $\Omega$  and $\kappa$ is the number of time steps.
We consider the following discrete model:
\begin{equation}
\gNd = \g_N^\dag + \delta \vec{\epsilon}_N = \RadonD_{\thetab} \f^\dag + \delta \vec{\epsilon}_N
\label{eq:DiscrRadonSampl}
\end{equation}
with 
\begin{equation*}
    \f^\dag = \left[ \begin{array}{c}
    \f_{t_1}^\dag \\
    \vdots \\
    \f_{t_\kappa}^\dag \end{array} \right], \quad \RadonD_{\thetab} = \left[ \begin{array}{ccc}
    \RadonD_{\thetab,t_1} & & \\
     & \ddots & \\
     & & \RadonD_{\thetab,t_\kappa} \end{array} \right], \quad \gNd = \left[ \begin{array}{c}
    \gNd_{,t_1} \\
    \vdots \\
    \gNd_{,t_\kappa} \end{array} \right].
\end{equation*}
where, for $\tau=t_1,\ldots,t_\kappa$, $\f_\tau^\dag \in \R^{\Npxl}$ denotes the (unknown) discrete and vectorized image at time $\tau$,  $\RadonD_{\thetab,\tau} \in \R^{\Ndtc N \times \Npxl}$ represents the sampled version of the Radon operator corresponding to the $N$ randomly sampled angles $\thetab$ at time $\tau$, $\g_{N,\tau}^\dag \in \R^{\Ndtc N}$ is the subsampled sinogram at time $\tau$ and $\vec{\epsilon}_{N,\tau} \in \R^{\Ndtc N}$ is the noise at time $\tau$. In the implementation, for each time step $\tau=t_1,\ldots,t_\kappa$, we consider a normal distribution for the noise vector, $\vec{\epsilon} \sim \mathcal{N}(\vec{0},\bOne_{\Ndtc N})$, where $\bOne_{\Ndtc N}$ is the identity matrix in $\R^{\Ndtc N \times \Ndtc N}$. 
To form each $\RadonD_{\thetab,\tau}$, we pick $N\in \N$ projection angles randomly from a uniform distribution, independently for each time step $\tau$. \\
Then, a regularized solution $\faNd \in \R_+^{\kappa\Npxl}$ is obtained by considering regularizers of the form:
\begin{equation}
\vec{R}(\f) = \frac{1}{p} \norm{\Kop \f}_p^p
\label{eq:BesovReguDiscr}
\end{equation}
where $1 < p < 2$ and $\Kop \in \R^{c\kappa\Npxl \times \kappa\Npxl}$, with $c\geq 1$, depends on the sparsifying transform. Our main focus is cylindrical shearlet-based regularization where $\Kop = \shD$ is the matrix representation of the 3D cylindrical shearlet transform detailed in \cref{ssec:cylShConstruction}. In particular, thanks to the theoretical framework developed in \cref{ssec:DecompSpace}, $\vec{R}(\f)$ is the discrete counterpart to the norm of the smoothness space $S_{p,p}^{\beta}(\R^3)$, where for simplicity we choose the parameter $\beta$ as in \cref{rmrk:noWeights} to have unitary weights in the coefficients.
For comparison, we also consider wavelet-based regularization, where  $\Kop = \Wop$ is the matrix representation associated with the 3D wavelet transform. In this case,  $\vec{R}(\f)$ is equivalent to the norm of $B_p^s(\R^3)$, provided that $s = d\left( \frac{1}{p} -\frac{1}{2} \right)$. 

Even though in this work we do not explicitly derive convergence rates for wavelet-based regularization for the dynamic setting, in view of the results in~\cite{bubba2021, Bubba22}, we expect the theoretical rates to be the same for any orthogonal wavelets. The comparison with wavelet-based regularization is motivated by its wide application in many inverse problems, including tomography, despite being suboptimal for approximating signals of dimension 2 and higher.

Finally, with the notations introduced above, the discrete counterpart of \eqref{eq:regularizedOLD} reads as:
\begin{equation}
\faNd = \argmin_{\f \in \R^{\kappa\Npxl}} \left\{ \frac{1}{2N} \norm{ \RadonD_{\thetab} \f -\gNd }_2^2 + \alpha \vec{R}(\f)  \right\}.   
\label{eq:minsDiscr}
\end{equation}
To solve~\eqref{eq:minsDiscr}, we use the variable metric inexact line-search algorithm (VMILA)~\cite{bonettini2016variable} and the full framework including a list of all the necessary toolboxes is available on Github~\cite{randDynTomo}. In addition, the appendix of \cite{Bubba22} contains a detailed explanation of a numerical implementation process which is almost directly applicable to the dynamic problem in \cref{eq:regularizedOLD} as well. 
The main difference in this case (with respect to the implementation detailed in~\cite{Bubba22}) are the implementation of the forward operator and the corresponding data, which consists of multiple time steps. However, those can be computed separately using standard implementations. In particular, the discrete Radon transform is implemented using the ASTRA Toolbox \cite{van2015astraI, van2016astraII}. Then, we use the cylindrical shearlet transform $\shD$ from \cite{Easley21}, whose implementation is available on Github \cite{3dCylShear}, with a bandlimited generator function, and three scales, with $2^3, 2^3$ and $2^4$ directions per scale, using zero-padding boundary conditions. 
Compared to the original implementation, we chose to precompute the directional filters before starting any of the iterations, which is more efficient.
For wavelet-based regularization, the implementation of the 3D discrete wavelet transform $\Wop$ is from the Matlab wavelet toolbox \cite{waveletToolbox}, using Daubechies-2 filters (with symmetric, i.e., mirrored, boundary conditions) and three scales.

\subsection{Test data} \label{ssec:numericalSetup}
Our regularized reconstruction approach is tested on both simulated and measured data. Specifically, we consider the following two sets of data.
\begin{itemize}
\item {\bf Cartoon phantom}: these are simulated data of a custom ellipsoid phantom (similar to the one used in \cite{bubba2023efficient}). The ground truth at some relevant time steps is shown in \cref{fig:cartoonPhantom}. In particular, the intensity values of the two larger ellipsoids change linearly in the interval $[0, 1]$, while the intensities of the multiple smaller ellipsoids follow a periodic pattern. The boundaries of the phantom remain fixed over time. 
    The spatial dimensions of the phantom are $128 \times 128$ and we simulated in total $\kappa = 32$ sparse angle sinograms (i.e., corresponding to 32 different time steps). For each time step $\tau =t_1, \ldots, t_{32}$, all data projections of $\g^{\delta}_{N,\tau}$ are simulated using parallel-beam geometry from the same reference object $\f_{\tau}^\dagger$ at twice the spatial resolution, and the sinogram is then binned to avoid inverse crime. 
    
\item {\bf STEMPO data set}: these are real X-ray tomography data measured with a motorized device. During the measurement process, a square section of the phantom is translated in a straight line across the fixed circular region. The data are available in Zenodo (cf. \cite{heikkila2022stempoData}) and a detailed description is reported in \cite{heikkila2022stempoDoc}. In particular, here we use the \texttt{stempo\textunderscore seq8x180\textunderscore 2d\textunderscore b8} data, which consists of 1440 projections in fan-beam geometry (8 full rotations with $2^\circ$ equispaced angles), each from a unique time step. We approximate $\kappa = 16$ sinograms by first dividing the data into 16 batches (corresponding to 90 equispaced projections from just a half revolution or a $180^\circ$ range each) and then by linearly interpolating the $N$ random projection angles from each batch. Although the theory presented in \cref{sec:dynTomo} pertains the case of parallel-beam geometry, the extension to the fan-beam case would be a matter of geometrical reparametrization. In practice, the $180^\circ$ range seems to be enough even for fan-beam and allows double the temporal resolution compared to using full $360^\circ$.
The spatial dimensions of the final reconstructions are $280 \times 280$.
    
To verify the convergence rates in expected value of \cref{thm:general_rate}, a ground truth is needed. To obtain a reliable ``ground truth'' in this case, we use the original (densely sampled) sinogram batches to compute a reconstruction using a variational method similar to the one in the original documentation~\cite{heikkila2022stempoDoc}. To avoid bias (towards either cylindrical shearlets or real-valued wavelets priors), we use 3D dual-tree complex wavelets~\cite{chen2011efficient} for the sparsifying transform in the regularization term. The regularization parameter is chosen automatically \cite{purisha2017controlled} and it is not used in any of the other numerical experiments. These reconstructions are shown in \cref{fig:cwt_stempo} and, for selected time steps, on the 
rightmost column of \cref{fig:recons_p32_stempo} and \cref{fig:recons_p1_stempo}.
\end{itemize}

\begin{remark}[Verification of source conditions and cylindrical cartoon-like function assumptions]
The (nearly) optimal approximation result reported in \cref{sec:cylShearlets} holds under the assumption that the target belongs to the class of cylindrical cartoon-like functions (cf.~\cref{def.videos}). In addition, to derive the convergence rates in \cref{sec:dynTomo}, a source condition assumption is needed (cf.~(S1)). However, 
for more realistic simulations, the model of cartoon-like videos~(cf.~\eqref{slant.funct}) offers a more natural setting for the phantoms presented above. Although there is no theoretical guarantee that the same approximation rate holds in this case, we argue that the result of \cref{sec:cylShearlets} still offers a valuable guideline and the numerical results provide an indication that a result similar to~\cref{th:main} is expected to hold in a more general setting.
Regarding condition (S1) for the cartoon phantom, it was already observed in~\cite{Bubba22} that a version of a simulated phantom generated by numerically enforcing the source condition is visually almost indistinguishable from the original. This suggests that the numerical verification of the convergence rates does not differ significantly between the original phantom and its version satisfying the source condition. Instead, for STEMPO data, the source condition cannot be established or enforced because of the experimental nature of the data.
Finally, a relationship has been established between a (different) class of source conditions (in particular, variational source conditions) and the theory of optimally sparse approximations for orthogonal wavelet systems. In~\cite{miller2021maximal}, it was shown that optimally sparse nonlinear wavelet approximations of cartoon-like images automatically satisfy a variational source condition. Although this observation does not apply directly to our setting, it is plausible that a similar relationship could be established also in our case. This topic is beyond the scope of this work and is left for future investigation.
\end{remark}

\begin{figure}[tbh]
    \centering
    \setlength{\imSz}{3.2cm} 
    \setlength{\imShift}{1.2\imSz} 
    
    \begin{tikzpicture} 

        \foreach \t/\j in {08/1,16/2,24/3,32/4}{
            \node[anchor=center] at ($0.03*\imShift*(\j,-1)$) {\includegraphics[width=\imSz]{recons/cartoon_obj_128_t0\t.png}};
            
            \node at ($0.03*\imShift*(\j,-0.45)$) {$\tau = \stripzero{\t}$};
        }
    \end{tikzpicture}
    \caption{Cartoon phantom ground truth at different time steps $\tau$. The reconstruction size is $128{\times}128{\times}32$.}
    \label{fig:cartoonPhantom}
\end{figure}

\begin{figure}[tbh]
    \centering
    
    \setlength{\imSz}{3.2cm} 
    \setlength{\imShift}{1.2\imSz} 
    
    \begin{tikzpicture}
    \foreach \t/\j in {1/1,4/2,7/3,16/4}{
        \node[anchor=center] at ($0.03*\imShift*(\j,-1)$) {\includegraphics[width=\imSz]{recons/stempo_obj_cwt_t\t.png}};

        \node at ($0.03*\imShift*(\j,-0.45)$) {$\tau = \t$};
    }
    \end{tikzpicture}
    \caption{Ground truth reconstruction ($280{\times}280{\times}16$) of the STEMPO phantom at different time steps $\tau$. }
    \label{fig:cwt_stempo}
\end{figure}

\subsection{Results}\label{ssec:results}
In this subsection, we verify the expected convergence rates proven in \cref{thm:general_rate} for the sparse dynamic CT problem \eqref{eq:BesovReguDiscr}-\eqref{eq:minsDiscr}. We focus on the statistical noise regime (cf.~\eqref{eq:noise_stat}), for which similarly to~\cite{bubba2021,Bubba22}  we consider the following two scenarios:
\begin{itemize}
    \item  {\bf Decreasing noise}: in particular, $\delta \simeq N^{-1}$. In this case, the optimal parameter choice is $\alpha \simeq N^{-1}$. In particular, we choose $\delta = c_{\delta}  N_{\min} \|\RadonD \f^\dag\|_{\infty} N^{-1}$, where $c_{\delta} = 0.6$ for the cartoon phantom, and $c_{\delta} = 0.45$ for the STEMPO data. Here, $N_{\min}$ is the minimum value of $N$ considered for the numerical experiments and it is different for the two tested data. Notice that as $N$ increases from $N_{\min}$ to $N_{\max}$, the corresponding noise level $\delta$ decreases from $c_{\delta} \|\RadonD \f^\dag\|_{\infty}$ to $c_{\delta} N_{\min}/N_{\max} \|\RadonD \f^\dag\|_{\infty}$. Finally, we set $\alpha = c_{\alpha}N^{-1}$, where $c_{\alpha}$ is heuristically determined for each choice of data and regularization method. The specific values of $c_\alpha$ and $c_\delta$ are in~\cref{tab:table}. 
    
    \item {\bf Fixed noise}, i.e., $\delta > 0$ constant. Since $\delta N \rightarrow \infty$, we take $\alpha \simeq N^{-1/3}$. In particular, we choose $\delta = c_{\delta} \|\RadonD \f^\dag\|_{\infty}$, where $c_{\delta} = 0.03$ for the cartoon phantom and $c_{\delta} = 0.05$ for the STEMPO data. 
    Then, we let $\alpha = c_{\alpha}N^{-1/3}$, where $c_{\alpha}$ is heuristically determined for each choice of regularization method and data. The specific values of $c_\alpha$ and $c_\delta$ are in \cref{tab:table}.
\end{itemize}

In the cartoon phantom experiments, we test 7 distinct values of $N$ between $N_{\min} = 24$ and $N_{\max} = 240$, and the projection angles are sampled uniformly from the interval $[0,2\pi)$. 
In the STEMPO data experiments, we test 6 distinct values of $N$ between $N_{\min} = 9$ and $N_{\max} = 60$, and the projection angles are sampled uniformly from the interval $[0,\pi)$ or $[\pi, 2\pi)$ in an alternating fashion based on the time step $\tau$. 
Indeed, due to the limited amount of STEMPO data, each of the sinogram $\g^{\delta}_{N,\tau}$ corresponds to just 180$^\circ$ range of projection angles and (unlike parallel-beam) in fan-beam geometry projections from opposite directions  (i.e., $\theta$ and $\theta + 180^\circ$) are unique.

Finally, the expected values appearing in \cref{thm:general_rate} are approximated by sample averages, computed using 5 random realizations. This means that, for each number of angles $N$, the reconstruction is performed 5 times, each time with a different set of $N$ drawn angles (sampled using Matlab’s \texttt{rand}) and a new realization of the noise vector.

\subsubsection{Validation: \texorpdfstring{$p = 3/2$}{p = 3/2}} 

We start by considering the case $p = \frac{3}{2}$, which is fully supported by the theory developed in \cref{sec:dynTomo}. 
We compute the value of the expected Bregman distance $\E \left[D_{\vec{R}}(\faNd, \f^\dag) \right]$ as a function of $N$ and report the slope of the decay rate in \cref{tab:table}. 
In computing the Bregman distance, $\faNd$ and $\f^\dag$ are considered as 3D objects.
In order to provide a quantitative assessment of the decay, we compare the theoretically predicted decay with the experimental one, which is obtained by computing the best monomial approximation $c N^{b}$ of the resulting curves. 

In the supplementary material (\cref{app:supplement}) we have included plots showing the decay rates for both decreasing and fixed noise regimes, using the simulated cartoon phantom in Figure SM1 
and the measured STEMPO data in Figure SM2. 
In each plot, the value of the expected Bregman distance is indicated by a blue solid line and its monomial approximation by a black dashed line. We also report the standard deviation error bars (that is, the shaded region in each plots).
Notice also that, with cylindrical shearlet regularization, there are basically no oscillations around the mean, contrary to the wavelet-based case where, especially with a lower number of projections, the variance is more pronounced.  

All of the predicted and numerically confirmed decay rates are also collected in to \cref{tab:table}, along with the corresponding regularization and noise parameters $c_\alpha$ and $c_\delta$. The first 4 rows (for $p = \frac{3}{2}$) show that the theoretical behaviour is numerically verified: $\E \left[D_{\vec{R}}(\faNd, \f^\dag) \right]$ decays as $N^{-1/3}$ in the fixed noise scenario, and as $N^{-1}$ in the decreasing noise one. 
The results are particularly noteworthy for the STEMPO data considering that the Bregman distance is computed from an estimated ground truth object, obtained from fixed and relatively limited data with some unavoidable measurement error and whose (randomized) projections have been further interpolated.

\begin{table}[bt]
\footnotesize
  \caption{\noindent Theoretical and numerically approximated convergence rates, and different regularization and noise parameter values for each choice of $p$, noise scenario, data set, and regularization method used in the numerical tests.}  \label{tab:table}
\begin{center}
  \begin{tabular}{cllc@{\extracolsep{4pt}}clcll@{}} 
    \multirow{2}{*}{$p$} & \multirow{2}{3.5em}{Noise scenario} & \multirow{2}{*}{Data} & \multirow{2}{5em}{Theoretical rate $b$ \ [$N^b$]} & \multicolumn{2}{c}{Cylindrical shearlet} & \multicolumn{2}{c}{Daubechies-2 wavelet} & \multirow{2}{*}{$c_\delta$} \\ \cline{5-6}\cline{7-8}
       &   &   &   & Approx. rate & $c_\alpha$ & Approx. rate & $c_\alpha$ & \rule[-6pt]{0pt}{0pt}\\ \hline \hline 
    \multirow{4}{*}{\bigger{$\frac{3}{2}$}} & \multirow{2}{*}{\textbf{Decreasing}} & Cartoon & \multirow{2}{*}{\bigger{$-1$}} & -1.190 & 0.03 & -1.001 & 0.12 & 0.6 \rule{0pt}{10pt} \\[0.2em] 
       &   & STEMPO &  & -0.943 & 0.005 & -0.938 & 0.015 & 0.45 \\[.5em] 
       & \multirow{2}{*}{\textbf{Fixed}} & Cartoon & \multirow{2}{*}{\bigger{$-\frac{1}{3}$}} & -0.315 & 0.03 & -0.381 & 0.12 & 0.03 \\[0.2em] 
       &    & STEMPO &   & -0.332 & 0.005 & -0.394 & 0.015 & 0.05 \\[1em] 
    \multirow{4}{*}{\bigger{$1$}} & \multirow{2}{*}{\textbf{Decreasing}} & Cartoon & \multirow{2}{*}{N/A} & -1.053 & 0.001 & -0.996 & 0.004 & 0.6 \\[0.2em] 
       &   & STEMPO &  & -0.920 & 0.0001 & -0.864 & 0.0003 & 0.45 \\[.5em] 
       & \multirow{2}{*}{\textbf{Fixed}} & Cartoon & \multirow{2}{*}{N/A} & -0.320 & 0.001 & -0.373 & 0.004 & 0.03 \\[0.2em] 
       &    & STEMPO &   & -0.340 & 0.00005 & -0.314 & 0.0003 & 0.05 \\ 
  \end{tabular}
\end{center}
\end{table}

Finally, to complement the convergence rates analysis and show that the proposed strategy leads to a practical and robust reconstruction algorithm, we report in \cref{fig:recons_p32_stempo} some reconstructions from the STEMPO data for the fixed noise scenario 
(using both cylindrical shearlets and wavelets). 
All the reconstructions are qualitatively good, especially considering the very limited data. Considering the low data regime, both regularization methods suffer from some sparse-angle artifacts and produce slightly blurry images. However, wavelet-based reconstructions are distinctly noisier. Overall, the cylindrical shearlet-based approach is able to produce clearer reconstructions, even in the limit case of only 9 projection views: the thin circular boundary of the phantom, in particular, is notably sharper. 

Some reconstructions of the simulated data are shown in Figure SM5 
of the supplementary material, including also the relative $\ell^2$ error and SSIM \cite{wang2004image} values. Since the dynamics are less severe, these reconstruction do not show as strong movement artefacts but the quality trends (for the two regularization methods and different number of projections) match those of the real data.

\begin{figure}[th!]
    \centering

    \setlength{\imSz}{2.4cm} 
    \setlength{\imShift}{1.18\imSz} 
    
    \begin{tikzpicture}
    \foreach \mtd/\Method/\offset in {cylSh/Cylindrical shearlets/0, w/Daubechies-2 wavelets/-2.1}{
    \begin{scope}[yshift=\offset*\imSz]
    \node[rotate=90] at ($0.03*\imShift*(0.25,-1.5)$) {\large \Method}; 
    \foreach \t/\j in {1/2,7/1}{
        \foreach \s/\i/\a in {01/1/9, 07/2/13, 13/3/19, 19/4/28}{
            \node[anchor=center] at ($0.03*\imShift*(\i,-\j)$) {\includegraphics[width=\imSz]{recons/stempo_\mtd_fixed_p32_s\s_t00\t.png}};

            \def\tempmtd{cylSh}%
            \ifx \mtd \tempmtd 
                \ifnum \j=1
                    \node at ($0.03*\imShift*(\i,-0.42)$) {\a \ angles};
                \fi
            \fi
        }
        \node[rotate=90] at ($0.03*\imShift*(0.42, -\j)$) {$\tau = \t$};
    }
    \end{scope}
    }
    \begin{scope}[yshift=0*\imSz, xshift=5.118*\imSz]
        \foreach \t/\j in {1/2,7/1}{
            \node[anchor=center] at ($0.03*\imShift*(0,-\j)$) {\includegraphics[width=\imSz]{recons/stempo_obj_cwt_t\t.png}};
            \ifnum \j=1
                \node at ($0.03*\imShift*(0,-0.42)$) {Ground truth};
            \else
                \node[anchor=center] at ($0.03*\imShift*(-0.3,-\j-1.58)$) {\includegraphics[height=2\imSz]{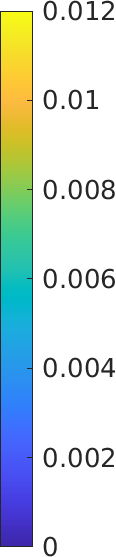}};
            \fi
        }
    \end{scope}
    \end{tikzpicture}
    \caption{Cylindrical shearlet and wavelet reconstructions from selected time steps and angular samples, fixed noise, $p = \frac{3}{2}$. STEMPO data.}
    \label{fig:recons_p32_stempo}
\end{figure}

\subsubsection{Extension: \texorpdfstring{$p = 1$}{p = 1}}
Next, we take a leap of faith and extend the numerical experiments outside the theoretically guaranteed range and set $p = 1$, which corresponds to the widespread regularization approach by minimization of the $\ell^1$-norm of wavelet or shearlet coefficients to enforce sparsity. The numerical machinery from the previous subsection can be still applied, however the Bregman distance is no longer uniquely defined nor is a metric. To circumvent the first issue, as in~\cite{Bubba22}, we follow the strategy in~\cite{burger2007error} to pick an element in the subdifferential $\partial R$. 
The approximate decay rates are listed in \cref{tab:table}.
Despite $D_{\vec{R}}$ still not being a metric, similarly to the numerical tests in~\cite{Bubba22}, it seems that our choice of $D_{\vec{R}}(\faNd, \f^\dag)$
is informative enough to be able to capture the desired convergence properties of the expected Bregman distance $\E \left[D_{\vec{R}}(\faNd, \f^\dag) \right]$. 

The plots showing the resulting decay rates are included in the supplementary material: in Figure SM3 
(simulated cartoon phantom) and  in Figure SM4
(measured STEMPO data).

As in the previous subsection, some reconstructions  from the STEMPO data for the fixed noise scenario are shown in \cref{fig:recons_p1_stempo} 
(using both cylindrical shearlets and wavelets). 
Despite the very limited data, the reconstructions are relatively sharp: both regularization methods are able to reconstruct well the static sections of the phantom and,
with cylindrical shearlets, there are notably less noise and sparse-angle artifacts in the reconstructions.
Instead, the wavelet regularized images show point-like artifacts even with a relatively large number of projections. The cylindrical shearlets produce better results across all $N$, even though the translated square section produces slightly visible trajectory artifacts.

\begin{figure}[th!]
    \centering

    \setlength{\imSz}{2.4cm} 
    \setlength{\imShift}{1.18\imSz} 
    
    \begin{tikzpicture}
    \foreach \mtd/\Method/\offset in {cylSh/Cylindrical shearlets/0, w/Daubechies-2 wavelets/-2.1}{
    \begin{scope}[yshift=\offset*\imSz]
    \node[rotate=90] at ($0.03*\imShift*(0.25,-1.5)$) {\large \Method}; 
    \foreach \t/\j in {1/2,7/1}{
        \foreach \s/\i/\a in {01/1/9, 07/2/13, 13/3/19, 19/4/28}{
            \node[anchor=center] at ($0.03*\imShift*(\i,-\j)$) {\includegraphics[width=\imSz]{recons/stempo_\mtd_fixed_p1_s\s_t00\t.png}};

            \def\tempmtd{cylSh}%
            \ifx \mtd \tempmtd 
                \ifnum \j=1
                    \node at ($0.03*\imShift*(\i,-0.42)$) {\a \ angles};
                \fi
            \fi
        }
        \node[rotate=90] at ($0.03*\imShift*(0.42, -\j)$) {$\tau = \t$};
    }
    \end{scope}
    }
    \begin{scope}[yshift=0*\imSz, xshift=5.118*\imSz]
        \foreach \t/\j in {1/2,7/1}{
            \node[anchor=center] at ($0.03*\imShift*(0,-\j)$) {\includegraphics[width=\imSz]{recons/stempo_obj_cwt_t\t.png}};
            \ifnum \j=1
                \node at ($0.03*\imShift*(0,-0.42)$) {Ground truth};
            \else
                \node[anchor=center] at ($0.03*\imShift*(-0.3,-\j-1.58)$) {\includegraphics[height=2\imSz]{recons/stempo_colorbar.png}};
            \fi
        }
    \end{scope}
    \end{tikzpicture}
    \caption{Cylindrical shearlet and wavelet reconstructions from selected time steps and angular samples, fixed noise, $p = 1$. STEMPO data.}
    \label{fig:recons_p1_stempo}
\end{figure}

Finally, in \cref{fig:stempo_variance} the pixelwise variance of the different reconstruction samples is shown for both regularization methods and values of $p$. Only one time step ($\tau=7$) is considered, for the sole fixed noise scenario. This illustrates the effect of randomly picking the projection angles for each measurement setting: as one should expect, with only 9 angles there is more variance whereas with 28 projections the reconstruction process is more robust and the results vary less. 
Similar results with 13 and 19 angles are available in the supplementary material (Figure SM6).

For $p=3/2$ (first and second column) the differences are relatively small and overall there is less variance in every test case compared to same instances with $p=1$.
For $p=1$ (third and fourth column) the differences are more noticeable. Cylindrical shearlet-based reconstructions exhibit a larger variance for the dynamic object (rectangle in the middle) while the static parts are more uniform. Instead, wavelet-based reconstructions show significant differences in both the dynamic and static parts of the reconstructions. This was somewhat expected given the larger oscillations of the wavelet samples in Figures SM1 to SM4.


\begin{figure}[th!]
    \centering
    
    \setlength{\imSz}{3cm} 
    \setlength{\imShift}{1.18\imSz} 
    
    \begin{tikzpicture}
    \foreach \p/\offset in {32/0,1/2.1}{
    \begin{scope}[xshift=\offset*\imSz]
    \ifnum \p=32
        \node at ($0.03*\imShift*(1.5,-0.2)$) {\large $p = \frac{3}{2}$};
    \else
        \node at ($0.03*\imShift*(1.5,-0.2)$) {\large $p = 1$};
    \fi
    \foreach \mtd/\Method/\j in {cylSh/Cyl. shearlets/1, w/Wavelets/2}{
        \node[anchor=center] at ($0.03*\imShift*(\j,-0.4)$) {\large \Method}; 
        \foreach \s/\a in {1/9, 2/28}{ 
            \node[anchor=center] at ($0.03*\imShift*(\j,-\s)$) {\includegraphics[width=\imSz]{images/varImgs/stempo_var_\mtd_fixed_p\p_t07_s00\s.png}};

            \def\tempmtd{cylSh}%
            \ifx \mtd \tempmtd 
                \ifnum \p=32
                    \node[rotate=90] at ($0.03*\imShift*(0.43,-\s)$) {\a \ angles};
                \fi
            \fi
        }
    }
    \end{scope}
    }
    \node at ($1.05*\imSz*(0.162,-0.05)$) {\includegraphics[height=2\imSz]{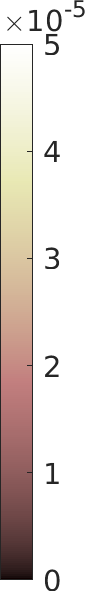}};
    \end{tikzpicture}
    \caption{Pixelwise variance between 5 reconstructions samples of the STEMPO data. Columns show different regularization methods and $p$-norms, rows show different number of projection angles $N$. Only the results from time step $\tau=7$ and fixed noise are shown.}
    \label{fig:stempo_variance}
\end{figure}

\section{Discussion and conclusions} \label{sec:conclusions}
We have demonstrated the application of the cylindrical shearlet transform in variational regularization of spatio-temporal data to solve a class of ill-posed dynamic tomography problems. Our choice is motivated by the 
fact that cylindrical shearlets provide (nearly) optimally sparse approximations for the class of cylindrical cartoon-like functions, which is suitable to model our desired application. 
We also introduce the class of cartoon-like videos, to model more realistic movements, but a proof of (nearly) optimally sparse approximations is left to future work.

Moreover, we extended the theory of shearlet decomposition spaces to include cylindrical shearlets in order to use their decomposition space norms as well-defined regularization functionals with a variety of weights and any $p > 0$.
We examined these regularization strategies and characterized their convergence properties (including  in the statistical inverse learning framework) for dynamic tomography problems. In the full measurements case, we provided convergence rates as the noise level goes to zero, considering both deterministic and random noise conditions.

Then, we considered the situation where only a limited number of imperfect samples are obtained though a random process, in our case a limited number of random X-ray projections of the changing target, and proved bounds for the error decay as the number of measurements grows. We provided rates for two distinct cases: where the amount of noise remains fixed and where its severity decreases with more samples. Our current methods of obtaining the convergence rates are limited to $p > 1$ and an interesting future development would be to show similar rates for $p = 1$ (i.e., the usual choice for sparsity promoting regularization). However, in addition to confirming the theoretical rates, our numerical studies indicate that a similar behavior is likely to be true for $p = 1$.

It is particularly interesting how well most of the numerical experiments follow the predicted theoretical rates with the measured STEMPO data, even if we do not have access to arbitrarily many, truly random and noise free projections nor the ground truth object. In addition, the actual reconstruction algorithm seems robust and effective. Perhaps similar convergence rates are viable also to researchers applying imaging methods (even other than tomography) and not just purely theoretical results.

Although possible, we do no comparisons with traditional shearlets. Firstly, as observed after \cref{th:main}, their approximation properties are expected to fall in between wavelets and cylindrical shearlets but at a higher computational cost; secondly, the relatively small temporal resolution of the data noticeably limits their reliability in practice (especially for the STEMPO data). With cylindrical shearlets this is not an issue as the directional filtering is only performed on the larger, spatial dimensions.

\section*{Acknowledgments}
The authors thank the Finnish Computing Competence Infrastructure (FCCI) for supporting this project with computational and data storage resources.

TAB and LR are members of INdAM-GNCS and
are funded by the European Union - NextGeneration EU
through the Italian Ministry of University and Research as part of the PNRR – M4C2, Investment 1.3 (MUR Directorial Decree no. 341 of 03/15/2022), FAIR ``Future Partnership Artificial Intelligence Research'', Proposal Code PE00000013 - CUP J33C22002830006). 

TH acknowledges the support of the Vilho, Yrjö and Kalle Väisälä Foundation, the Finnish Foundation for Technology Promotion, the Centre of Excellence in Inverse Modelling and Imaging and the Flagship of Advanced Mathematics for Sensing, Imaging and Modelling.

DL acknowledges support of Simons Foundation grant MPS-TSM-00002738.

\appendix

\section{Ancillary results for \texorpdfstring{\cref{sec:dynTomo}}{section 3}}\label{ap:ancillaryRes}

We report here ancillary results leading to \cref{thm:general_rate}. While their statement is analogous to those in~\cite{bubba2021,Bubba22}, we state them here for the specific case of the dynamic setting. We only sketch the proofs since they follow closely the arguments in~\cite{bubba2021}.

The starting point to derive the converge rates in \cref{thm:general_rate} is the deterministic setting, i.e., by fixing a realization of the noise $\be$ and of the sampling pattern $\bu$. We begin with the optimality criterion associated with \eqref{eq:regularized}, which reads as:
\begin{equation}
	\label{eq:optimality_criterion}
	\cAbu^* ( \cAbu \fdan - \gdan) + \alpha \nabla R (\fdan) = 0.
\end{equation}

Since the simple strategy adopted in (the proof of)  \cref{prop:conv_rate_full} 
would deliver suboptimal convergence rates, we use instead the Fenchel-Young's inequalities. Notice that we do not require any source condition on $f^\dag$ yet. This leads to the following result, which is derived exactly as in \cite[proposition 3.2]{bubba2021}, where for simplicity we consider the choice $\Gamma_1 = \widetilde{\gamma}_1 \operatorname{Id}$ and $\Gamma_2 = \widetilde{\gamma}_2 \operatorname{Id}$, where $\widetilde{\gamma}_1, \widetilde{\gamma}_2 \in \R$ are positive constants and $\operatorname{Id}$ is the identity operator. 

\begin{proposition}[{\cite[Proposition 3.2]{bubba2021}}]
\label{prop:aux_convex2}
The regularized solution $\fdan$ given by \eqref{eq:regularized} satisfies
\begin{multline}
	\label{eq:aux_breg2}
	D_R(\fdan, f^\dag) \
	\leq  \inf_{\bar{w} \in L^2(0,T;\mathcal{V}_N)} \left(R^\star\left(\frac{1}{\widetilde{\gamma}_1}(\sdiff^\dag - \cAbu^* \bar w)\right) + \frac \alpha 2 \norm{\bar w}_{L^2(0,T;\mathcal{V}_N)}^2\right)  \\
	 + R(\widetilde{\gamma}_1(f^\dag - \fdan)) + \frac{1}{\alpha} \left(R^\star\left(\frac{\delta} {\widetilde{\gamma}_2} \cAbu^* \be\right)  +  R\left(\widetilde{\gamma}_2(f^\dag - \fdan)\right)\right),
\end{multline}
where $\sdiff^\dag = \nabla R(f^\dag)$ and $\widetilde{\gamma}_1, \widetilde{\gamma}_2 \in \R$ are positive constants.
\end{proposition}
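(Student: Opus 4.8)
The plan is to obtain \eqref{eq:aux_breg2} directly from the optimality condition \eqref{eq:optimality_criterion}, combining two applications of the Fenchel--Young inequality (with the scalings $\widetilde{\gamma}_1$ and $\widetilde{\gamma}_2$) with a completion-of-square step for the quadratic mismatch term. The argument is formally the same as in \cite[Proposition 3.2]{bubba2021}; the only difference is that the pairings now take place in the Bochner space $L^2(0,T;\mathcal{V}_N)$, which does not affect any of the algebra since $\cAbu$ and $\cAbu^*$ behave exactly as in the static case.

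First I would use \eqref{eq:optimality_criterion} together with $\gdan = \cAbu f^\dag + \delta \be$ to write
\[
\alpha \nabla R(\fdan) = -\,\cAbu^* \cAbu (\fdan - f^\dag) + \delta\, \cAbu^* \be .
\]
Inserting this into the definition of the symmetric Bregman distance $D_R(\fdan,f^\dag) = \langle \nabla R(\fdan) - \sdiff^\dag, \fdan - f^\dag \rangle_*$ (recall $\sdiff^\dag = \nabla R(f^\dag)$) and abbreviating $u = \fdan - f^\dag$, one gets
\[
D_R(\fdan,f^\dag) = -\tfrac{1}{\alpha}\norm{\cAbu u}_{L^2(0,T;\mathcal{V}_N)}^2 + \tfrac{\delta}{\alpha}\,\langle \cAbu^* \be, u\rangle_* + \langle \sdiff^\dag, f^\dag - \fdan\rangle_* .
\]

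Next I would introduce the auxiliary variable by the splitting $\sdiff^\dag = (\sdiff^\dag - \cAbu^* \bar w) + \cAbu^* \bar w$, for an arbitrary $\bar w \in L^2(0,T;\mathcal{V}_N)$. The contribution $\langle \cAbu^* \bar w, f^\dag - \fdan\rangle_* = -\langle \bar w, \cAbu u\rangle$ is grouped with the quadratic term, whereas the remainder $\langle \sdiff^\dag - \cAbu^* \bar w, f^\dag - \fdan\rangle_*$ is estimated through Fenchel--Young, $\langle y, x\rangle_* \le R^\star(y) + R(x)$, scaled by $\widetilde{\gamma}_1$, producing $R^\star\!\big(\tfrac{1}{\widetilde{\gamma}_1}(\sdiff^\dag - \cAbu^* \bar w)\big) + R(\widetilde{\gamma}_1(f^\dag - \fdan))$. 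A second application of Fenchel--Young to the noise term $\tfrac{\delta}{\alpha}\langle \cAbu^* \be, u\rangle_*$, scaled by $\widetilde{\gamma}_2$ and using the evenness $R(-x) = R(x)$ of the $p$-homogeneous functional, yields $\tfrac{1}{\alpha}\big(R^\star(\tfrac{\delta}{\widetilde{\gamma}_2}\cAbu^* \be) + R(\widetilde{\gamma}_2(f^\dag - \fdan))\big)$.

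Finally, I would handle the two leftover terms $-\tfrac{1}{\alpha}\norm{\cAbu u}^2 - \langle \bar w, \cAbu u\rangle$ by Young's inequality with a suitable constant, retaining the resulting nonpositive quadratic remainder; this bounds them by $\tfrac{\alpha}{2}\norm{\bar w}_{L^2(0,T;\mathcal{V}_N)}^2$. Since $\bar w$ was arbitrary, taking the infimum over $\bar w \in L^2(0,T;\mathcal{V}_N)$ gives \eqref{eq:aux_breg2}. The step demanding most care is the simultaneous bookkeeping of the sign conventions, of the two scalings $\widetilde{\gamma}_1,\widetilde{\gamma}_2$, and of the Young constant in the last step: it is precisely this finer splitting --- rather than the cruder single Young estimate used in \cref{prop:conv_rate_full} --- that keeps the ensuing convergence rates sharp.
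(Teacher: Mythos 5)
Your argument is correct and is precisely the route the paper indicates: the paper does not spell out a proof but states that \cref{prop:aux_convex2} ``is derived exactly as in \cite[Proposition 3.2]{bubba2021}'' starting from the optimality criterion \eqref{eq:optimality_criterion} and using Fenchel--Young inequalities, which is what you reconstruct, including the correct splitting of $\sdiff^\dag$ via $\cAbu^*\bar w$, the two scaled Fenchel--Young steps, and the Young/completion-of-squares estimate absorbing $-\tfrac{1}{2\alpha}\norm{\cAbu u}^2$ to leave $\tfrac{\alpha}{2}\norm{\bar w}^2$. No gaps; the passage to the infimum over $\bar w$ at the end is justified since the remaining terms do not depend on $\bar w$.
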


Now, due to the definition of the norm $ \| f \|_{X} = \| \sh f \|_{\ell^p(\tilde{w})}$, we can deduce that, as the space $\ell^p$, $X$ is $p$-smooth and $2$-convex (see \cite[Definitions 2.32-2.33]{schuster2012regularization}).
Hence, to estimate the second and the last term on the right hand side of \eqref{eq:aux_breg2}, as in \cite[Section 4]{bubba2021}, we rely on the $p$-homogeneity of the functional $R$, as well as on tools from convex analysis, such as the Xu-Roach's inequalities~\cite{Xu91} for the $2$-convex Banach space $X$. This leads to the next lemma.
\begin{lemma}[{\cite[Lemma 4.1]{bubba2021}}]
\label{lem:xuroach}
Let $f,\tilde f\in X$. For $1<p<2$ it holds that
\begin{equation}
\label{eq:aux_XuRoach}
	\widetilde{\gamma}^p R(f-\tilde f) \leq C \left(1-\frac p2\right) \widetilde{\gamma}^{\frac{2p}{2-p}} \max\left\{R(f), R(\tilde f)\right\} + \frac p2 D_R(f,\tilde f),
\end{equation}
for some $C>0$ depending on $p$ with any $\widetilde{\gamma}>0$.
\end{lemma}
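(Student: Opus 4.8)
The plan is to exploit the fact, recorded immediately before the statement, that $X$ is isometric to a (weighted) $\ell^p$ sequence space through the cylindrical shearlet analysis operator, and is therefore $2$-convex. The engine of the proof is the quantitative form of $2$-convexity for the $p$-homogeneous functional $R(\cdot)=\frac1p\norm{\cdot}_X^p$, i.e.\ one of the Xu--Roach characteristic inequalities~\cite{Xu91}: there is a constant $c>0$, depending only on $p$ and on the convexity constant of $X$, such that for all $f,\tilde f\in X$
\begin{equation*}
D_R(f,\tilde f)\;\geq\; c\,\max\{\norm{f}_X,\norm{\tilde f}_X\}^{\,p-2}\,\norm{f-\tilde f}_X^2 .
\end{equation*}
The case $f=\tilde f$ is trivial (both sides of~\eqref{eq:aux_XuRoach} vanish), so we may assume $M:=\max\{\norm{f}_X,\norm{\tilde f}_X\}>0$ and the negative power $M^{p-2}$ is well defined.

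First I would rearrange this bound to isolate the norm difference: it gives $\norm{f-\tilde f}_X^2\leq c^{-1}M^{2-p}D_R(f,\tilde f)$, and raising to the power $p/2<1$ (which preserves the inequality) yields
\begin{equation*}
\widetilde\gamma^p R(f-\tilde f)=\frac{\widetilde\gamma^p}{p}\norm{f-\tilde f}_X^p\leq \frac{\widetilde\gamma^p}{p}\,c^{-p/2}\,M^{(2-p)p/2}\,D_R(f,\tilde f)^{p/2}.
\end{equation*}

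The second step is to split the product on the right by Young's inequality with the conjugate exponents $r=\frac{2}{2-p}$ and $r'=\frac2p$, applied to the factors $M^{(2-p)p/2}$ and $D_R(f,\tilde f)^{p/2}$. The choice of $r'$ is dictated by the requirement $(D_R^{p/2})^{r'}=D_R$, so that the Bregman term reappears linearly; tracking the constant $1/r'=p/2$ produces exactly the coefficient $\frac p2$ in front of $D_R(f,\tilde f)$. The complementary term carries $\bigl(\widetilde\gamma^p\bigr)^{r}=\widetilde\gamma^{2p/(2-p)}$ and $\bigl(M^{(2-p)p/2}\bigr)^{r}=M^{p}$, with prefactor $\frac1r=\frac{2-p}{2}=1-\frac p2$. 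Finally, invoking the $p$-homogeneity of $R$ in the form $M^p=\max\{\norm f_X^p,\norm{\tilde f}_X^p\}=p\,\max\{R(f),R(\tilde f)\}$ converts this into $C\bigl(1-\tfrac p2\bigr)\widetilde\gamma^{2p/(2-p)}\max\{R(f),R(\tilde f)\}$, where $C$ collects the powers of $c$ and $p$ and thus depends only on $p$. Assembling the two terms gives precisely~\eqref{eq:aux_XuRoach}.

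The routine part is the bookkeeping of exponents in Young's inequality; the only genuinely delicate point is the first displayed inequality, namely securing the Xu--Roach lower bound on the \emph{symmetric} Bregman distance with the correct power $p-2$ of $\max\{\norm f_X,\norm{\tilde f}_X\}$. I would obtain it either by quoting the characteristic inequality of~\cite{Xu91} for spaces whose modulus of convexity is of power type $2$, or by transporting the corresponding elementary sequence inequality from $\ell^p$ to $X$ through the isometry furnished by the Parseval frame. One must also justify replacing $\norm f_X+\norm{\tilde f}_X$ by $\max\{\norm f_X,\norm{\tilde f}_X\}$, which (since $p-2<0$) costs only a harmless factor $2^{p-2}$ that can be absorbed into $c$.
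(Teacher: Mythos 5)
Your proof is correct and follows exactly the route the paper intends: the paper does not reprove this lemma but imports it from \cite[Lemma 4.1]{bubba2021}, whose argument is precisely the Xu--Roach characteristic inequality for the $2$-convex space $X$ (lower-bounding the symmetric Bregman distance by $c\max\{\norm{f}_X,\norm{\tilde f}_X\}^{p-2}\norm{f-\tilde f}_X^2$) combined with Young's inequality for the conjugate exponents $\tfrac{2}{2-p}$ and $\tfrac{2}{p}$ and the $p$-homogeneity of $R$. Your exponent bookkeeping and the handling of the $\max$ versus sum in the Xu--Roach bound are both sound.
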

The first term on the right hand side of \eqref{eq:aux_XuRoach} can be treated through the following 
estimate.
\begin{proposition}[{\cite[Proposition 3.1, second estimate]{bubba2021}}]
\label{prop:apriori}
The unique minimizer $\fdan$ of $\Jdan$ satisfies, for some constant $C>0$,
\begin{equation}
\label{eq:apriori2}
	R(\fdan) \leq C\left(R(f^\dag) + \left(\frac{\delta}\alpha\right)^{\frac{p}{p-1}} R^\star( \cAbu^* \be)\right).
\end{equation}
\end{proposition}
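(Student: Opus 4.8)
The plan is to obtain this \textit{a priori} bound directly from the minimality of $\fdan$, without invoking any source condition, and then to turn the resulting cross term into the convex-conjugate quantity $R^\star(\cAbu^*\be)$ via a \emph{scaled} Fenchel--Young inequality. Writing $p' = \frac{p}{p-1}$ for the conjugate exponent, the two homogeneity facts I would rely on are that $R(f) = \frac1p\|f\|_X^p$ is $p$-homogeneous and that, being the Fenchel conjugate of a $p$-homogeneous convex functional, $R^\star$ is $p'$-homogeneous.

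First I would use $\Jdan(\fdan) \leq \Jdan(f^\dag)$. Substituting $\gdan = \cAbu f^\dag + \delta\be$ and setting $h = \cAbu(\fdan - f^\dag)$, the fidelity term becomes $\tfrac12\|h - \delta\be\|^2_{L^2(0,T;\mathcal{V}_N)}$, and the contribution $\tfrac{\delta^2}{2}\|\be\|^2$ cancels on both sides. Discarding the nonnegative $\tfrac12\|h\|^2$ and rewriting the cross term through the adjoint leaves the key inequality
\[
\alpha R(\fdan) \leq \alpha R(f^\dag) + \langle \fdan - f^\dag,\, \delta\cAbu^*\be \rangle_*.
\]

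Next I would estimate the cross term. Applying the scaled Fenchel--Young inequality $\langle x,\xi\rangle_* \leq \lambda^p R(x) + \lambda^{-p'} R^\star(\xi)$ (valid for every $\lambda>0$ by the homogeneities above) with $x = \fdan - f^\dag$ and $\xi = \delta\cAbu^*\be$, then using the quasi-triangle bound $R(\fdan - f^\dag)\le 2^{p-1}\big(R(\fdan)+R(f^\dag)\big)$ and the identity $R^\star(\delta\cAbu^*\be) = \delta^{p'} R^\star(\cAbu^*\be)$, I obtain
\[
\langle \fdan - f^\dag,\, \delta\cAbu^*\be \rangle_* \leq \lambda^p 2^{p-1}\big( R(\fdan) + R(f^\dag) \big) + \lambda^{-p'}\delta^{p'} R^\star(\cAbu^*\be).
\]

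Finally I would choose $\lambda$ so that $\lambda^p 2^{p-1} = \tfrac{\alpha}{2}$, absorbing half of $\alpha R(\fdan)$ into the left-hand side; rearranging and dividing by $\tfrac{\alpha}{2}$ gives $R(\fdan) \leq 3R(f^\dag) + \tfrac{2}{\alpha}\lambda^{-p'}\delta^{p'} R^\star(\cAbu^*\be)$. The decisive bookkeeping is the exponent: since $\lambda^p = \alpha/2^p$ gives $\lambda^{-p'} \simeq \alpha^{-1/(p-1)}$, the extra $\alpha^{-1}$ produced by the division combines as $\alpha^{-1}\alpha^{-1/(p-1)} = \alpha^{-p/(p-1)}$, which together with $\delta^{p'} = \delta^{p/(p-1)}$ assembles into exactly $\left(\tfrac{\delta}{\alpha}\right)^{p/(p-1)}$; absorbing all numerical constants into $C$ yields the claim. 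I expect this last step to be the main obstacle, because the correct structure is obtained only by keeping $\delta$ inside $R^\star$ to exploit its $p'$-homogeneity and by tracking the power of $\alpha$ through the division --- a naive Young's inequality in the Hilbert space $L^2(0,T;\mathcal{V}_N)$ would instead yield the weaker $\tfrac{\delta^2}{2\alpha}\|\be\|^2$ and miss the $R^\star(\cAbu^*\be)$ term entirely.
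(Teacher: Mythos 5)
Your argument is correct: the minimality comparison $\Jdan(\fdan)\leq\Jdan(f^\dag)$, the cancellation of $\tfrac{\delta^2}{2}\|\be\|^2$, the scaled Fenchel--Young inequality exploiting the $p$-homogeneity of $R$ and $p'$-homogeneity of $R^\star$, and the exponent bookkeeping $\alpha^{-1}\alpha^{-1/(p-1)}=\alpha^{-p/(p-1)}$ all check out, and this is essentially the same route as the proof the paper defers to in \cite[Proposition 3.1]{bubba2021} (minimality plus Fenchel--Young with a free scaling constant, as in the $\widetilde{\gamma}_1,\widetilde{\gamma}_2$ of \cref{prop:aux_convex2}). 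No gaps.
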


The following result combines all the previous estimates to provide a deterministic bound on the Bregman distance between $\fdan$ and $f^{\dag}$. It relies also on the fact that, for the choice of $R$ outlined in \eqref{eq:R_pnorm}, it holds  that:
\begin{equation} \label{eq:R_star}
    R^\star(y) \leq \frac{1}{q} \| y \|_{X^*}^q = \frac{1}{q} \sum_{\lambda \in \Lambda} \hat{w}_\lambda|\langle y,\overline{\psi}_\lambda\rangle_{X^*\times X}|^q \ ,
\end{equation}
where $\overline{\psi}_\lambda = \psi_\lambda|_{\Omega \times (0,T)}$ are the truncated shearlet frame elements, $q$ is the H\"{o}lder conjugate of $p$ and $\hat{w}_\lambda = (\tilde{w}_\lambda)^{-q/p}$. This can be proved via \cite[Proposition 4.1]{bubba2021}, using the fact that, in this case, the shearlet transform $\sh$ is associated with a Parseval frame, hence $\sh ^\dag = \sh ^*$.

\begin{theorem}[{\cite[Theorem 4.3, (i)]{bubba2021}}]
\label{thm:bregman_dist_gen}
The regularized solution $\fdan$ given by \eqref{eq:regularized} satisfies
\begin{equation*}
	D_R(\fdan, f^\dag) 
	\leq   \widetilde C_p\left[\widetilde{\gamma}_1^{-q} \msR(\alpha \widetilde{\gamma}_1^q, \bu; f^\dag)  + 
	 H(\alpha, \delta, \widetilde{\gamma}_1, \widetilde{\gamma}_2) R^\star(\cAbu^* \be) +
	 \left(\widetilde{\gamma}_1^p	+ \frac{\widetilde{\gamma}_2^p}{\alpha}\right)^{\frac{2}{2-p}} R(f^\dag)\right]
\end{equation*}
for arbitrary $\widetilde{\gamma}_1,\widetilde{\gamma}_2 >0$, where $\widetilde C_p>0$ is a constant independent of $\alpha,\delta$, $N$ and 
\begin{equation*}
	H(\alpha, \delta, \widetilde{\gamma}_1, \widetilde{\gamma}_2) = \frac{\delta^q}{\alpha \widetilde{\gamma}_2^q} + \left(\widetilde{\gamma}_1^p	+ \frac{\widetilde{\gamma}_2^p}{\alpha}\right)^{\frac{2}{2-p}} \left(\frac \delta \alpha\right)^{q}.
\end{equation*}
\end{theorem}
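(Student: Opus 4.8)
The plan is to chain together the three auxiliary results \cref{prop:aux_convex2}, \cref{lem:xuroach}, and \cref{prop:apriori}, exploiting throughout the homogeneity of $R$ and $R^\star$. Since $R(f)=\tfrac1p\|f\|_X^p$ is $p$-homogeneous and its conjugate satisfies $R^\star(y)=\tfrac1q\|y\|_{X^*}^q$ (hence $q$-homogeneous, with $q=p/(p-1)$), the rescalings by $\widetilde{\gamma}_1,\widetilde{\gamma}_2$ appearing in \eqref{eq:aux_breg2} can be pulled out as explicit powers.

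First I would take the starting bound \eqref{eq:aux_breg2} and simplify its two ``clean'' terms. Using $q$-homogeneity of $R^\star$ together with the definition \eqref{eq:Rbeautiful} of $\msR$, the infimum term becomes
\[
\inf_{\bar w}\Big(R^\star\big(\tfrac{1}{\widetilde{\gamma}_1}(\sdiff^\dag-\cAbu^*\bar w)\big)+\tfrac\alpha2\|\bar w\|^2\Big)
=\widetilde{\gamma}_1^{-q}\,\msR(\alpha\widetilde{\gamma}_1^{q},\bu;f^\dag),
\]
which is exactly the first term in the claimed estimate: factoring out $\widetilde{\gamma}_1^{-q}$ rescales the quadratic penalty so that its weight becomes $\alpha\widetilde{\gamma}_1^{q}$, precisely the first argument of $\msR$. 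Likewise $R^\star(\tfrac{\delta}{\widetilde{\gamma}_2}\cAbu^*\be)=\tfrac{\delta^{q}}{\widetilde{\gamma}_2^{q}}R^\star(\cAbu^*\be)$, which after multiplication by $1/\alpha$ contributes the first summand $\tfrac{\delta^q}{\alpha\widetilde{\gamma}_2^q}$ of $H$.

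The crux is the two remaining terms $R(\widetilde{\gamma}_1(f^\dag-\fdan))$ and $\tfrac1\alpha R(\widetilde{\gamma}_2(f^\dag-\fdan))$, which still contain the unknown minimizer $\fdan$. By $p$-homogeneity they combine into $\big(\widetilde{\gamma}_1^p+\tfrac{\widetilde{\gamma}_2^p}{\alpha}\big)R(f^\dag-\fdan)$. I would then apply the Xu--Roach inequality \cref{lem:xuroach} with the effective parameter $\widetilde{\gamma}=\big(\widetilde{\gamma}_1^p+\tfrac{\widetilde{\gamma}_2^p}{\alpha}\big)^{1/p}$, so that $\widetilde{\gamma}^p$ matches the combined coefficient and $\widetilde{\gamma}^{2p/(2-p)}=\big(\widetilde{\gamma}_1^p+\tfrac{\widetilde{\gamma}_2^p}{\alpha}\big)^{2/(2-p)}$ produces the exponent $2/(2-p)$ visible in the theorem. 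This splits the combined term into a multiple of $\max\{R(f^\dag),R(\fdan)\}$ plus a residual $\tfrac p2 D_R(\fdan,f^\dag)$.

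Finally, since $p<2$ we have $\tfrac p2<1$, so the $\tfrac p2 D_R$ residual can be absorbed into the left-hand side $D_R(\fdan,f^\dag)$, at the cost of a factor $(1-\tfrac p2)^{-1}$ swept into $\widetilde C_p$. It then remains to control $\max\{R(f^\dag),R(\fdan)\}$: here I would invoke the a priori bound \cref{prop:apriori}, namely $R(\fdan)\lesssim R(f^\dag)+(\delta/\alpha)^{q}R^\star(\cAbu^*\be)$, to eliminate the maximum. Collecting terms, the two $R^\star(\cAbu^*\be)$ contributions (one from the noise term, one from the a priori estimate) assemble precisely into $H(\alpha,\delta,\widetilde{\gamma}_1,\widetilde{\gamma}_2)$, while the $R(f^\dag)$ contribution acquires the factor $\big(\widetilde{\gamma}_1^p+\tfrac{\widetilde{\gamma}_2^p}{\alpha}\big)^{2/(2-p)}$, giving the stated inequality. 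The main obstacle is exactly this handling of the $\fdan$-dependent terms: the nonlinearity of $R$ forbids a direct estimate, and one must balance the Xu--Roach absorption (which relies crucially on $p<2$) against the a priori control of $R(\fdan)$ so that the final exponents of $\alpha$, $\delta$, and $N$ come out sharp.
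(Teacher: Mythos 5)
Your proposal is correct and follows essentially the same route the paper indicates: it chains \cref{prop:aux_convex2}, \cref{lem:xuroach} (applied with the effective parameter $\widetilde{\gamma}=(\widetilde{\gamma}_1^p+\widetilde{\gamma}_2^p/\alpha)^{1/p}$), and \cref{prop:apriori}, using the $p$- and $q$-homogeneity of $R$ and $R^\star$ to pull out the scalings and absorbing the $\tfrac p2 D_R$ residual into the left-hand side. The bookkeeping that assembles the two $R^\star(\cAbu^*\be)$ contributions into $H$ and attaches the factor $(\widetilde{\gamma}_1^p+\widetilde{\gamma}_2^p/\alpha)^{2/(2-p)}$ to $R(f^\dag)$ matches the cited argument of \cite[Theorem 4.3]{bubba2021}.
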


The statement (and the proof) of \cref{thm:general_rate} follows from the last result by taking the expectation of Bregman distance associated with the optimal choice of $\alpha$. 

\section{Supplementary materials} \label{app:supplement}

The supplementary material is available with the published manuscript at the SIAM website: \url{https://doi.org/10.1137/24M1661923}

\bibliographystyle{plain}

{\small%
\bibliography{RandDynTomo}}

\end{document}